\begin{document}  %for latex 2.09

\newcommand{\nc}{\newcommand}
\newcommand{\delete}[1]{}
\nc{\dfootnote}[1]{{}}          %{{}}
\nc{\ffootnote}[1]{\dfootnote{#1}}
%\nc{\mfootnote}[1]{{}}        % Use this to suppress footnotes
\nc{\mfootnote}[1]{\footnote{#1}} % Use this to show footnotes
%\nc{\ofootnote}[1]{{}}        % Use this to suppress footnotes
\nc{\ofootnote}[1]{\footnote{\tiny Older version: #1}} % Use this to show footnotes

\nc{\mlabel}[1]{\label{#1}}  % Use this to suppress names
\nc{\mcite}[1]{\cite{#1}}  % Use this to suppress names
\nc{\mref}[1]{\ref{#1}}  % Use this to suppress names

\delete{
\nc{\mlabel}[1]{\label{#1}  % Use the next two lines to show names
{\hfill \hspace{1cm}{\bf{{\ }\hfill(#1)}}}}
\nc{\mcite}[1]{\cite{#1}{{\bf{{\ }(#1)}}}}  % Use this lines to show names
\nc{\mref}[1]{\ref{#1}{{\bf{{\ }(#1)}}}}  % Use this lines to show names
}

\nc{\mbibitem}[1]{\bibitem{#1}} % Use this to show number
%\nc{\mbibitem}[1]{\bibitem[\bf #1]{#1}} % Use this to show name
\nc{\mkeep}[1]{\marginpar{{\bf #1}}} % Use this to show marginpar
%\nc{\mkeep}[1]{{}}      % Use this to suppress marginpar

%%%%%%%%%%%%%%%%%%%%%%%% Statements
\newtheorem{theorem}{Theorem}[section]
\newtheorem{prop}[theorem]{Proposition}
\newtheorem{defn}[theorem]{Definition}
\newtheorem{lemma}[theorem]{Lemma}
\newtheorem{coro}[theorem]{Corollary}
\newtheorem{prop-def}{Proposition-Definition}[section]
\newtheorem{claim}{Claim}[section]
\newtheorem{remark}[theorem]{Remark}
\newtheorem{propprop}{Proposed Proposition}[section]
\newtheorem{conjecture}{Conjecture}
\newtheorem{exam}[theorem]{Example}
\newtheorem{assumption}{Assumption}
\newtheorem{condition}[theorem]{Assumption}

\renewcommand{\labelenumi}{{\rm(\alph{enumi})}}
\renewcommand{\theenumi}{\alph{enumi}}

\nc{\tred}[1]{\textcolor{red}{#1}}
\nc{\tblue}[1]{\textcolor{blue}{#1}}
\nc{\tgreen}[1]{\textcolor{green}{#1}}

%%%%%%%%%%%%%%%%%%%%%%% symbols
\nc{\adec}{\check{;}}
\nc{\dftimes}{\widetilde{\otimes}} \nc{\dfl}{\succ}
\nc{\dfr}{\prec} \nc{\dfc}{\circ} \nc{\dfb}{\bullet}
\nc{\dft}{\star} \nc{\dfcf}{{\mathbf k}} \nc{\spr}{\cdot}
\nc{\disp}[1]{\displaystyle{#1}}
\nc{\bin}[2]{ (_{\stackrel{\scs{#1}}{\scs{#2}}})}  %binomial coeff
\nc{\binc}[2]{ \left (\!\! \begin{array}{c} \scs{#1}\\
    \scs{#2} \end{array}\!\! \right )}  %binomial coeff
\nc{\bincc}[2]{  \left ( {\scs{#1} \atop
    \vspace{-.5cm}\scs{#2}} \right )}  %binomial coeff
\nc{\sarray}[2]{\begin{array}{c}#1 \vspace{.1cm}\\ \hline
    \vspace{-.35cm} \\ #2 \end{array}}
\nc{\bs}{\bar{S}} \nc{\dcup}{\stackrel{\bullet}{\cup}}
\nc{\dbigcup}{\stackrel{\bullet}{\bigcup}} \nc{\etree}{\big |}
\nc{\la}{\longrightarrow} \nc{\fe}{\'{e}} \nc{\rar}{\rightarrow}
\nc{\dar}{\downarrow} \nc{\dap}[1]{\downarrow
\rlap{$\scriptstyle{#1}$}} \nc{\uap}[1]{\uparrow
\rlap{$\scriptstyle{#1}$}} \nc{\defeq}{\stackrel{\rm def}{=}}
\nc{\dis}[1]{\displaystyle{#1}}
\nc{\dotcup}{\,\displaystyle{\bigcup^\bullet}\ }
\nc{\sdotcup}{\tiny{\displaystyle{\bigcup^\bullet}\ }}
\nc{\hcm}{\ \hat{,}\ }
\nc{\hcirc}{\hat{\circ}} \nc{\hts}{\hat{\shpr}}
\nc{\lts}{\stackrel{\leftarrow}{\shpr}}
\nc{\rts}{\stackrel{\rightarrow}{\shpr}} \nc{\lleft}{[}
\nc{\lright}{]} \nc{\uni}[1]{\tilde{#1}} \nc{\wor}[1]{\check{#1}}
\nc{\free}[1]{\bar{#1}} \nc{\den}[1]{\check{#1}} \nc{\lrpa}{\wr}
\nc{\curlyl}{\left \{ \begin{array}{c} {} \\ {} \end{array}
    \right .  \!\!\!\!\!\!\!}
\nc{\curlyr}{ \!\!\!\!\!\!\!
    \left . \begin{array}{c} {} \\ {} \end{array}
    \right \} }
\nc{\leaf}{\ell}       % number of leafs
\nc{\longmid}{\left | \begin{array}{c} {} \\ {} \end{array}
    \right . \!\!\!\!\!\!\!}
\nc{\ot}{\otimes} \nc{\sot}{{\scriptstyle{\ot}}}
\nc{\otm}{\overline{\ot}}
\nc{\ora}[1]{\stackrel{#1}{\rar}}
\nc{\ola}[1]{\stackrel{#1}{\la}}%${\Bbb Z}$
\nc{\scs}[1]{\scriptstyle{#1}} \nc{\mrm}[1]{{\rm #1}}
\nc{\margin}[1]{\marginpar{\rm #1}}   %{\rm #1}}
\nc{\dirlim}{\displaystyle{\lim_{\longrightarrow}}\,}
\nc{\invlim}{\displaystyle{\lim_{\longleftarrow}}\,}
\nc{\mvp}{\vspace{0.5cm}} \nc{\svp}{\vspace{2cm}}
\nc{\vp}{\vspace{8cm}} \nc{\proofbegin}{\noindent{\bf Proof: }}
%\nc{\proofbegin}{\begin{proof}} % AMS command
\nc{\proofend}{$\blacksquare$ \vspace{0.5cm}}
%\nc{\proofend}{\end{proof}} %AMS command
%\nc{\intg}[1]{\lceil{#1}\rceil}  %old free int ring
%\nc{\sha}{\scs{\mbox{\cyr X}}} %used to be \cyr
\nc{\sha}{{\mbox{\cyr X}}}  %used to be \cyr
\nc{\ncsha}{{\mbox{\cyr X}^{\mathrm NC}}} \nc{\ncshao}{{\mbox{\cyr
X}^{\mathrm NC,\,0}}}
\nc{\shpr}{\diamond}    %Shuffle product
\nc{\shprm}{\,\overline{\diamond}\,}    %Shuffle product
\nc{\shpro}{\diamond^0}    %Shuffle product
\nc{\shprr}{\diamond^r}     %product on controlled trees
\nc{\shprp}{\shpr_v}
\nc{\shprl}{\shpr_\ell}
\nc{\shprw}{\shpr_w}
\nc{\shpra}{\overline{\diamond}^r}
\nc{\shpru}{\check{\diamond}} \nc{\catpr}{\diamond_l}
\nc{\rcatpr}{\diamond_r} \nc{\lapr}{\diamond_a}
\nc{\sqcupm}{\ot}
\nc{\lepr}{\diamond_e} \nc{\vep}{\varepsilon} \nc{\labs}{\mid\!}
\nc{\rabs}{\!\mid} \nc{\hsha}{\widehat{\sha}}
\nc{\lsha}{\stackrel{\leftarrow}{\sha}}
\nc{\rsha}{\stackrel{\rightarrow}{\sha}} \nc{\lc}{\lfloor}
\nc{\rc}{\rfloor}
\nc{\lm}{\,\slash}
\nc{\rtm}{\backslash\,}
\nc{\sqmon}[1]{\langle #1\rangle}
\nc{\forest}{\calf} \nc{\ass}[1]{\alpha({#1})}
\nc{\altx}{\Lambda_X} \nc{\vecT}{\vec{T}} \nc{\onetree}{\, \bullet\, }
\nc{\Ao}{\check{A}}
\nc{\seta}{\underline{\Ao}}
\nc{\deltaa}{\overline{\delta}}
\nc{\trho}{\tilde{\rho}}

%%%%%%%%%%%%%%%%%%%%% roman fonts, in alphabetic order
\nc{\mmbox}[1]{\mbox{\ #1\ }} \nc{\ann}{\mrm{ann}}
\nc{\Aut}{\mrm{Aut}} \nc{\can}{\mrm{can}}
\nc{\bread}{\mrm{b}}
\nc{\colim}{\mrm{colim}}
\nc{\Cont}{\mrm{Cont}} \nc{\rchar}{\mrm{char}}
\nc{\cok}{\mrm{coker}} \nc{\dtf}{{R-{\rm tf}}} \nc{\dtor}{{R-{\rm
tor}}}
\renewcommand{\det}{\mrm{det}}
\nc{\depth}{{\mrm d}}
\nc{\Div}{{\mrm Div}} \nc{\End}{\mrm{End}} \nc{\Ext}{\mrm{Ext}}
\nc{\Fil}{\mrm{Fil}} \nc{\Frob}{\mrm{Frob}} \nc{\Gal}{\mrm{Gal}}
\nc{\GL}{\mrm{GL}} \nc{\Hom}{\mrm{Hom}} \nc{\hsr}{\mrm{H}}
\nc{\hpol}{\mrm{HP}} \nc{\id}{\mrm{id}}
\nc{\mht}{\mrm{h}}
\nc{\im}{\mrm{im}}
\nc{\incl}{\mrm{incl}} \nc{\length}{\mrm{length}}
\nc{\LR}{\mrm{LR}} \nc{\mchar}{\rm char}
\nc{\mapped}{operated\xspace}
\nc{\Mapped}{Operated\xspace}
\nc{\NC}{\mrm{NC}}
\nc{\mpart}{\mrm{part}} \nc{\ql}{{\QQ_\ell}} \nc{\qp}{{\QQ_p}}
\nc{\rank}{\mrm{rank}} \nc{\rba}{\rm{RBA }} \nc{\rbas}{\rm{RBAs }}
\nc{\rbw}{\rm{RBW }} \nc{\rbws}{\rm{RBWs }} \nc{\rcot}{\mrm{cot}}
\nc{\rest}{\rm{controlled}\xspace}
\nc{\rdef}{\mrm{def}} \nc{\rdiv}{{\rm div}} \nc{\rtf}{{\rm tf}}
\nc{\rtor}{{\rm tor}} \nc{\res}{\mrm{res}}
\nc{\Set}{{\mathbf{Set}}}
\nc{\SL}{\mrm{SL}}
\nc{\Spec}{\mrm{Spec}} \nc{\tor}{\mrm{tor}} \nc{\Tr}{\mrm{Tr}}
\nc{\mtr}{\mrm{sk}}

%%%%%%%%%%%%%%%%%% bold face
\nc{\ab}{\mathbf{Ab}} \nc{\Alg}{\mathbf{Alg}}
\nc{\Algo}{\mathbf{Alg}^0} \nc{\Bax}{\mathbf{Bax}}
\nc{\Baxo}{\mathbf{Bax}^0} \nc{\RB}{\mathbf{RB}}
\nc{\RBo}{\mathbf{RB}^0} \nc{\BRB}{\mathbf{RB}}
\nc{\Dend}{\mathbf{DD}} \nc{\bfk}{{\bf k}} \nc{\bfone}{{\bf 1}}
\nc{\base}[1]{{a_{#1}}} \nc{\detail}{\marginpar{\bf More detail}
    \noindent{\bf Need more detail!}
    \svp}
\nc{\Diff}{\mathbf{Diff}} \nc{\gap}{\marginpar{\bf
Incomplete}\noindent{\bf Incomplete!!}
    \svp}
\nc{\FMod}{\mathbf{FMod}} \nc{\mset}{\mathbf{MSet}}
\nc{\rb}{\mathrm{RB}} \nc{\Int}{\mathbf{Int}}
\nc{\Mon}{\mathbf{Mon}}
\nc{\Map}{\mathrm{Map}}
%\nc{\remark}{\noindent{\bf Remark: }}
\nc{\remarks}{\noindent{\bf Remarks: }} \nc{\Rep}{\mathbf{Rep}}
\nc{\Rings}{\mathbf{Rings}} \nc{\Sets}{\mathbf{Sets}}
\nc{\DT}{\mathbf{DT}}

%%%%%%%%%%%%%%%%%%%Bbb fonts
\nc{\BA}{{\mathbb A}} \nc{\CC}{{\mathbb C}} \nc{\DD}{{\mathbb D}}
\nc{\EE}{{\mathbb E}} \nc{\FF}{{\mathbb F}} \nc{\GG}{{\mathbb G}}
\nc{\HH}{{\mathbb H}} \nc{\LL}{{\mathbb L}} \nc{\NN}{{\mathbb N}}
\nc{\QQ}{{\mathbb Q}} \nc{\RR}{{\mathbb R}} \nc{\TT}{{\mathbb T}}
\nc{\VV}{{\mathbb V}} \nc{\ZZ}{{\mathbb Z}}

%%%%%%%%%%%%%%%%%%% cal fonts

\nc{\cala}{{\mathcal A}} \nc{\calc}{{\mathcal C}}
\nc{\cald}{{\mathcal D}} \nc{\cale}{{\mathcal E}}
\nc{\calf}{{\mathcal F}} \nc{\calfr}{{{\mathcal F}^{\,r}}}
\nc{\calfo}{{\mathcal F}^0} \nc{\calfro}{{\mathcal F}^{\,r,0}}
\nc{\oF}{\overline{F}}  \nc{\calg}{{\mathcal G}}
\nc{\calh}{{\mathcal H}} \nc{\cali}{{\mathcal I}}
\nc{\calj}{{\mathcal J}} \nc{\call}{{\mathcal L}}
\nc{\calm}{{\mathcal M}}
\nc{\oM}{\overline{M}}
\nc{\caln}{{\mathcal N}}
\nc{\calo}{{\mathcal O}} \nc{\calp}{{\mathcal P}}
\nc{\calr}{{\mathcal R}} \nc{\calt}{{\mathcal T}}
\nc{\caltr}{{\mathcal T}^{\,r}}
\nc{\calu}{{\mathcal U}} \nc{\calv}{{\mathcal V}}
\nc{\calw}{{\mathcal W}} \nc{\calx}{{\mathcal X}}
\nc{\CA}{\mathcal{A}}

%%%%%%%%%%%%%%%%%%  frak fonts
\nc{\fraka}{{\mathfrak a}} \nc{\frakB}{{\mathfrak B}}
\nc{\frakb}{{\mathfrak b}} \nc{\frakd}{{\mathfrak d}}
\nc{\oD}{\overline{D}}
\nc{\frakD}{{\mathcal D}}
\nc{\frakF}{{\mathfrak F}} \nc{\frakg}{{\mathfrak g}}
\nc{\frakI}{{\mathcal I}}
\nc{\frakL}{{\mathcal L}}
\nc{\frakm}{{\mathfrak m}}
\nc{\ofrakm}{\bar{\frakm}}
\nc{\frakM}{{\mathcal M}}
\nc{\frakMo}{{\mathfrak M}^0} \nc{\frakp}{{\mathfrak p}}
\nc{\frakP}{{\mathcal P}}
\nc{\frakR}{{\mathcal R}}
\nc{\frakS}{{\mathcal S}} \nc{\frakSo}{{\mathfrak S}^0}
\nc{\fraks}{{\mathfrak s}} \nc{\os}{\overline{\fraks}}
\nc{\frakT}{{\mathfrak T}}
\nc{\oT}{\overline{T}}
\nc{\frakV}{{\mathcal V}}
\nc{\oV}{\overline{V}}
\nc{\frakW}{{\mathcal W}}
\nc{\frakw}{{\mathfrak w}}
\nc{\oW}{\overline{W}}
%\nc{\frakx}{{\mathfrak x}}
\nc{\frakX}{{\mathfrak X}} \nc{\frakXo}{{\mathfrak X}^0}
\nc{\frakx}{{\mathbf x}}
%\nc{\frakTxo}{{\frakTx}^0}
\nc{\frakTx}{\frakT}      %All rooted trees, correspond to \ncsha(X)
\nc{\frakTa}{\frakT^a}        % rooted trees for \ncsha(A)
\nc{\frakTxo}{\frakTx^0}   % rooted trees for \ncshao(X)
\nc{\caltao}{\calt^{a,0}}   % rooted trees for \ncshao(A)
\nc{\ox}{\overline{\frakx}} \nc{\fraky}{{\mathfrak y}}
\nc{\frakz}{{\mathfrak z}} \nc{\oX}{\overline{X}}

\font\cyr=wncyr10

\nc{\redtext}[1]{\textcolor{red}{#1}}

%%%%%%%%small-scale trees
\def\s1tree{\!\!\includegraphics[scale=0.41]{1tree.eps}}
%%%%trees with 3 leaves
\def\sa2tree{\!\!\includegraphics[scale=0.41]{2tree.eps}}
\def\sb3tree{\!\!\includegraphics[scale=0.41]{3tree.eps}}
\def\sc4tree{\!\!\includegraphics[scale=0.41]{4tree.eps}}

%%%%trees with 2 leaves
\def\1tree{\!\!\includegraphics[scale=0.51]{1tree.eps}}
%%%%trees with 3 leaves
\def\2tree{\!\!\includegraphics[scale=0.51]{2tree.eps}}
\def\3tree{\!\!\includegraphics[scale=0.51]{3tree.eps}}
\def\4tree{\!\!\includegraphics[scale=0.51]{4tree.eps}}
%%%%trees with 4 leaves
\def\5tree{\!\!\includegraphics[scale=0.51]{5tree.eps}}
\def\6tree{\!\!\includegraphics[scale=0.51]{6tree.eps}}
\def\7tree{\!\!\includegraphics[scale=0.51]{7tree.eps}}
\def\8tree{\!\!\includegraphics[scale=0.51]{8tree.eps}}
\def\9tree{\!\!\includegraphics[scale=0.51]{9tree.eps}}
\def\a1tree{\!\!\includegraphics[scale=0.51]{10tree.eps}}
\def\b1tree{\!\!\includegraphics[scale=0.51]{11tree.eps}}
\def\c1tree{\!\!\includegraphics[scale=0.51]{12tree.eps}}
\def\d1tree{\!\!\includegraphics[scale=0.51]{13tree.eps}}
\def\e1tree{\!\!\includegraphics[scale=0.51]{14tree.eps}}
\def\f1tree{\!\!\includegraphics[scale=0.51]{15tree.eps}}
%%%%%decorated trees
\def\dec1tree{\!\!\includegraphics[scale=0.51]{dectree1.eps}}
\def\xtree{\!\!\includegraphics[scale=0.41]{xtree.eps}}
\def\xyztree{\!\!\includegraphics[scale=0.41]{xyztree.eps}}
%\def\xtd31{\!\!\includegraphics[scale=1]{xtd31.eps}}

%\def\xthj44{\!\!\includegraphics[scale=1]{xthj44.eps}}

%%%%%%%%%%%%%%%%%%%%%%%%%%%%%%%%%%%%%%%%%%
%%%%%%%%%% paths and trees
\def\ccxtree{\includegraphics[scale=1]{ccxtree}}
\def\ccxtreedec{\includegraphics[scale=1]{ccxtreedec}}
\def\ccetree{\includegraphics[scale=1]{ccetree}}
\def\ccllcrcr{\includegraphics[scale=1]{ccllcrcr}}
\def\cclclcrcr{\includegraphics[scale=.5]{cclclcrcr}}
\def\ccllxryr{\includegraphics[scale=.5]{ccllxryr}}
\def\cclxlyrzr{\includegraphics[scale=1]{cclxlyrzr}}
\def\ccIII{\includegraphics[scale=1]{ccIII}}

\def\lxtree{\includegraphics[scale=1]{lxtree}}
\def\xdtree{\includegraphics[scale=1]{xdtree}}
\def\xytree{\includegraphics[scale=1]{xytree}}
\def\lxdtree{\includegraphics[scale=1]{lxdtree}}
\def\dtree{\includegraphics[scale=0.5]{dtree}}
\def\xyleft{\includegraphics[scale=.5]{xyleft}}
\def\xyright{\includegraphics[scale=.7]{xyright}}
\def\yzdtree{\includegraphics[scale=.7]{yzdtree}}
\def\xyztreea{\includegraphics[scale=.7]{xyztree1}}
\def\xyztreeb{\includegraphics[scale=.7]{xyztree2}}
\def\xyztreec{\includegraphics[scale=.7]{xyztree3}}
\def\tableps{\includegraphics[scale=1.4]{tableps}}

\def\decTree{\!\!\!\includegraphics[scale=1.41]{decTree.eps}}
\def\kyldec31{\!\!\includegraphics[scale=1.5]{kyldec31.eps}}

\def\yldec31{\!\!\includegraphics[scale=1.5]{ntreedec3.eps}}

\def\xyldec43{\!\!\includegraphics[scale=1.0]{mtreedec2.eps}}

\def\DecTreeProd1{\!\!\includegraphics[scale=1.5]{DecTreeProd1.eps}}
\def\TreeProd1{\!\!\includegraphics[scale=1.5]{TreeProd1.eps}}
\def\MPath{\!\!\includegraphics[scale=0.8]{MPath.eps}}
\def\DecMPath{\!\!\includegraphics[scale=0.8]{DecMPath.eps}}
\def\RiseMPath{\!\!\includegraphics[scale=0.8]{RiseMPath.eps}}
\def\DecomMPath{\!\!\includegraphics[scale=1.2]{DecomMPath.eps}}
\def\BijDecMPath{\!\!\includegraphics[scale=0.8]{BijDecMPath.eps}}
\def\MPathProd{\!\!\includegraphics[scale=1]{MPathProd.eps}}

%%%%%%%%%%%%%%%%%%%%%%%%%%%% leaf trees

\def\xlb2{{\scalebox{0.40}{ %%%%%%%%%%%%%%%%%%%%%%%%%%%%%%%%%\tb2
\begin{picture}(30,42)(38,-38)
\SetWidth{0.5} \SetColor{Black} \Vertex(45,-3){5.66}
\SetWidth{1.0} \Line(45,-3)(45,-33) \SetWidth{0.5}
\Vertex(45,-33){5.66}
\put(50,-50){\em\huge x}
\end{picture}}}}

\def\ylb2{{\scalebox{0.40}{ %%%%%%%%%%%%%%%%%%%%%%%%%%%%%%%%%\tb2
\begin{picture}(30,42)(38,-38)
\SetWidth{0.5} \SetColor{Black} \Vertex(45,-3){5.66}
\SetWidth{1.0} \Line(45,-3)(45,-33) \SetWidth{0.5}
\Vertex(45,-33){5.66}
\put(50,-50){\em\huge y}
\end{picture}}}}

\def\xyld31{{\scalebox{0.40}{ %%%%%%%%%%%%%%%%%%%%%%%%%%%%%%%%%\td31
\begin{picture}(42,42)(23,-38)
\SetColor{Black} \SetWidth{0.5} \Vertex(45,-3){5.66}
\Vertex(30,-33){5.66} \Vertex(60,-33){5.66} \SetWidth{1.0}
\Line(45,-3)(30,-33) \Line(60,-33)(45,-3)
\put(20,-58){\em\huge x}
\put(50,-58){\em\huge y}
\end{picture}}}}

\def\xylf41{{\scalebox{0.40}{ %%%%%%%%%%%%%%%%%%%%%%%%%%%%%%%%%\tf41
\begin{picture}(52,80)(38,-10)
\SetColor{Black} \SetWidth{0.5} \Vertex(45,27){5.66}
\Vertex(45,-3){5.66} \SetWidth{1.0} \Line(45,27)(45,-3)
\put(45,-25){\em\huge x}
\SetWidth{0.5} \Vertex(60,57){5.66} \SetWidth{1.0}
\Line(45,27)(60,57) \SetWidth{0.5} \Vertex(75,27){5.66}
\SetWidth{1.0} \Line(75,27)(60,57)
\put(75,7){\em\huge y}
\end{picture}}}}

\def\xylg42{{\scalebox{0.40}{ %%%%%%%%%%%%%%%%%%%%%%%%%%%%%%%%%\tg42
\begin{picture}(52,80)(8,-10)
\SetColor{Black} \SetWidth{0.5} \Vertex(45,27){5.66}
\Vertex(45,-3){5.66} \SetWidth{1.0} \Line(45,27)(45,-3)
\put(39,-25){\em\huge y}
\SetWidth{0.5} \Vertex(15,27){5.66} \Vertex(30,57){5.66}
\SetWidth{1.0} \Line(15,27)(30,57) \Line(45,27)(30,57)
\put(5,2){\em\huge x}
\end{picture}}}}

%%%%%%%%%%%%%%%%%%% trees and angularly decorated trees

\def\ta1{{\scalebox{0.25}{ %%%%%%%%%%%%%%%%%%%%%%%%%%%%%%%%%\ta1
\begin{picture}(12,12)(38,-38)
\SetWidth{0.5} \SetColor{Black} \Vertex(45,-33){5.66}
\end{picture}}}}

\def\tb2{{\scalebox{0.40}{ %%%%%%%%%%%%%%%%%%%%%%%%%%%%%%%%%\tb2
\begin{picture}(12,42)(38,-38)
\SetWidth{0.5} \SetColor{Black} \Vertex(45,-3){5.66}
\SetWidth{1.0} \Line(45,-3)(45,-33) \SetWidth{0.5}
\Vertex(45,-33){5.66}
\end{picture}}}}

\def\tc3{{\scalebox{0.25}{ %%%%%%%%%%%%%%%%%%%%%%%%%%%%%%%%%\tc3
\begin{picture}(12,72)(38,-38)
\SetWidth{0.5} \SetColor{Black} \Vertex(45,27){5.66}
\SetWidth{1.0} \Line(45,27)(45,-3) \SetWidth{0.5}
\Vertex(45,-33){5.66} \SetWidth{1.0} \Line(45,-3)(45,-33)
\SetWidth{0.5} \Vertex(45,-3){5.66}
\end{picture}}}}

\def\td31{{\scalebox{0.25}{ %%%%%%%%%%%%%%%%%%%%%%%%%%%%%%%%%\td31
\begin{picture}(42,42)(23,-38)
\SetWidth{0.5} \SetColor{Black} \Vertex(45,-3){5.66}
\Vertex(30,-33){5.66} \Vertex(60,-33){5.66} \SetWidth{1.0}
\Line(45,-3)(30,-33) \Line(60,-33)(45,-3)
\end{picture}}}}

\def\xtd31{{\scalebox{0.35}{ %%%%%%%%%%%%%%%%%%%%%%%%%%%%%%%%%\td31
\begin{picture}(70,42)(13,-35)
\SetWidth{0.5} \SetColor{Black} \Vertex(45,-3){5.66}
\Vertex(30,-33){5.66} \Vertex(60,-33){5.66} \SetWidth{1.0}
\Line(45,-3)(30,-33) \Line(60,-33)(45,-3)
\put(38,-38){\em \huge x}
\end{picture}}}}

\def\ytd31{{\scalebox{0.35}{ %%%%%%%%%%%%%%%%%%%%%%%%%%%%%%%%%\td31
\begin{picture}(70,42)(13,-35)
\SetWidth{0.5} \SetColor{Black} \Vertex(45,-3){5.66}
\Vertex(30,-33){5.66} \Vertex(60,-33){5.66} \SetWidth{1.0}
\Line(45,-3)(30,-33) \Line(60,-33)(45,-3)
\put(38,-38){\em \huge y}
\end{picture}}}}

\def\xldec41r{{\scalebox{0.35}{ %%%%%%%%%%%%%%%%%%%%%%%%%%%%%%%%%\xldec41r
\begin{picture}(70,42)(13,-45)
\SetColor{Black}
\SetWidth{0.5} \Vertex(45,-3){5.66}
\Vertex(30,-33){5.66} \Vertex(60,-33){5.66}
\Vertex(60,-63){5.66}
\SetWidth{1.0}
\Line(45,-3)(30,-33) \Line(60,-33)(45,-3)
\Line(60,-33)(60,-63)
\put(38,-38){\em \huge x}

\end{picture}}}}

\def\xyrlong{{\scalebox{0.35}{ %%%%%%%%%%%%%%%%%%%%%%%%%%%%%%%%%\td31
\begin{picture}(70,72)(13,-48)
\SetColor{Black}
\SetWidth{0.5} \Vertex(45,-3){5.66}
\Vertex(30,-33){5.66} \Vertex(60,-33){5.66} \SetWidth{1.0}
\Line(45,-3)(30,-33) \Line(60,-33)(45,-3)
\put(38,-38){\em\huge x}
\SetWidth{0.5}
\Vertex(45,-63){5.66} \Vertex(75,-63){5.66} \SetWidth{1.0}
\Line(60,-33)(45,-63) \Line(60,-33)(75,-63)
\put(55,-63){\em\huge y}
\end{picture}}}}

\def\xyllong{{\scalebox{0.35}{ %%%%%%%%%%%%%%%%%%%%%%%%%%%%%%%%%\td31
\begin{picture}(70,72)(13,-48)
\SetColor{Black}
\SetWidth{0.5} \Vertex(45,-3){5.66}
\Vertex(30,-33){5.66} \Vertex(60,-33){5.66} \SetWidth{1.0}
\Line(45,-3)(30,-33) \Line(60,-33)(45,-3)
\put(40,-33){\em\huge y}
\SetWidth{0.5}
\Vertex(15,-63){5.66} \Vertex(45,-63){5.66} \SetWidth{1.0}
\Line(30,-33)(15,-63) \Line(30,-33)(45,-63)
\put(25,-63){\em\huge x}
\end{picture}}}}

\def\xyldec43{{\scalebox{0.35}{ %%%%%%%%%%%%%%%%%%%%%%%%%%%%%%%%%\xyldec43
\begin{picture}(70,62)(13,-25)
\SetColor{Black}
\SetWidth{0.5} \Vertex(45,-3){5.66}
\Vertex(15,-33){5.66} \Vertex(45,-38){5.66}
\Vertex(75,-33){5.66}
\SetWidth{1.0}
\Line(45,-3)(15,-33) \Line(45,-3)(45,-38)
\Line(45,-3)(74,-33)
\put(25,-33){\em\huge x}
\put(50,-33){\em\huge y}
\end{picture}}}}

\def\te4{{\scalebox{0.25}{ %%%%%%%%%%%%%%%%%%%%%%%%%%%%%%%%%\te4
\begin{picture}(12,102)(38,-8)
\SetWidth{0.5} \SetColor{Black} \Vertex(45,57){5.66}
\Vertex(45,-3){5.66} \Vertex(45,27){5.66} \Vertex(45,87){5.66}
\SetWidth{1.0} \Line(45,57)(45,27) \Line(45,-3)(45,27)
\Line(45,57)(45,87)
\end{picture}}}}

\def\tf41{{\scalebox{0.25}{ %%%%%%%%%%%%%%%%%%%%%%%%%%%%%%%%%\tf41
\begin{picture}(42,72)(38,-8)
\SetWidth{0.5} \SetColor{Black} \Vertex(45,27){5.66}
\Vertex(45,-3){5.66} \SetWidth{1.0} \Line(45,27)(45,-3)
\SetWidth{0.5} \Vertex(60,57){5.66} \SetWidth{1.0}
\Line(45,27)(60,57) \SetWidth{0.5} \Vertex(75,27){5.66}
\SetWidth{1.0} \Line(75,27)(60,57)
\end{picture}}}}

\def\tg42{{\scalebox{0.25}{ %%%%%%%%%%%%%%%%%%%%%%%%%%%%%%%%%\tg42
\begin{picture}(42,72)(8,-8)
\SetWidth{0.5} \SetColor{Black} \Vertex(45,27){5.66}
\Vertex(45,-3){5.66} \SetWidth{1.0} \Line(45,27)(45,-3)
\SetWidth{0.5} \Vertex(15,27){5.66} \Vertex(30,57){5.66}
\SetWidth{1.0} \Line(15,27)(30,57) \Line(45,27)(30,57)
\end{picture}}}}

\def\th43{{\scalebox{0.25}{ %%%%%%%%%%%%%%%%%%%%%%%%%%%%%%%%%\th43
\begin{picture}(42,42)(8,-8)
\SetWidth{0.5} \SetColor{Black} \Vertex(45,-3){5.66}
\Vertex(15,-3){5.66} \Vertex(30,27){5.66} \SetWidth{1.0}
\Line(15,-3)(30,27) \Line(45,-3)(30,27) \Line(30,27)(30,-3)
\SetWidth{0.5} \Vertex(30,-3){5.66}
\end{picture}}}}

\def\thII43{{\scalebox{0.25}{ %%%%%%%%%%%%%%%%%%%%%%%%%%%%%%%%%\th43
\begin{picture}(72,57) (68,-128)
    \SetWidth{0.5}
    \SetColor{Black}
    \Vertex(105,-78){5.66}
    \SetWidth{1.5}
    \Line(105,-78)(75,-123)
    \Line(105,-78)(105,-123)
    \Line(105,-78)(135,-123)
    \SetWidth{0.5}
    \Vertex(75,-123){5.66}
    \Vertex(105,-123){5.66}
    \Vertex(135,-123){5.66}
  \end{picture}
  }}}

\def\thj44{{\scalebox{0.25}{ %%%%%%%%%%%%%%%%%%%%%%%%%%%%%%%%%\thj44
\begin{picture}(42,72)(8,-8)
\SetWidth{0.5} \SetColor{Black} \Vertex(30,57){5.66}
\SetWidth{1.0} \Line(30,57)(30,27) \SetWidth{0.5}
\Vertex(30,27){5.66} \SetWidth{1.0} \Line(45,-3)(30,27)
\SetWidth{0.5} \Vertex(45,-3){5.66} \Vertex(15,-3){5.66}
\SetWidth{1.0} \Line(15,-3)(30,27)
\end{picture}}}}

\def\xthj44{{\scalebox{0.35}{ %%%%%%%%%%%%%%%%%%%%%%%%%%%%%%%%%\thj44
\begin{picture}(42,72)(8,-8)
\SetWidth{0.5} \SetColor{Black} \Vertex(30,57){5.66}
\SetWidth{1.0} \Line(30,57)(30,27) \SetWidth{0.5}
\Vertex(30,27){5.66} \SetWidth{1.0} \Line(45,-3)(30,27)
\SetWidth{0.5} \Vertex(45,-3){5.66} \Vertex(15,-3){5.66}
\SetWidth{1.0} \Line(15,-3)(30,27)
\put(25,-3){\em\huge x}
\end{picture}}}}

\def\ti5{{\scalebox{0.25}{ %%%%%%%%%%%%%%%%%%%%%%%%%%%%%%%%%\ti5
\begin{picture}(12,132)(23,-8)
\SetWidth{0.5} \SetColor{Black} \Vertex(30,117){5.66}
\SetWidth{1.0} \Line(30,117)(30,87) \SetWidth{0.5}
\Vertex(30,87){5.66} \Vertex(30,57){5.66} \Vertex(30,27){5.66}
\Vertex(30,-3){5.66} \SetWidth{1.0} \Line(30,-3)(30,27)
\Line(30,27)(30,57) \Line(30,87)(30,57)
\end{picture}}}}

\def\tj51{{\scalebox{0.25}{ %%%%%%%%%%%%%%%%%%%%%%%%%%%%%%%%%\tj51
\begin{picture}(42,102)(53,-38)
\SetWidth{0.5} \SetColor{Black} \Vertex(61,27){4.24}
\SetWidth{1.0} \Line(75,57)(90,27) \Line(60,27)(75,57)
\SetWidth{0.5} \Vertex(90,-3){5.66} \Vertex(60,27){5.66}
\Vertex(75,57){5.66} \Vertex(90,-33){5.66} \SetWidth{1.0}
\Line(90,-33)(90,-3) \Line(90,-3)(90,27) \SetWidth{0.5}
\Vertex(90,27){5.66}
\end{picture}}}}

\def\tk52{{\scalebox{0.25}{ %%%%%%%%%%%%%%%%%%%%%%%%%%%%%%%%%\tk52
\begin{picture}(42,102)(23,-8)
\SetWidth{0.5} \SetColor{Black} \Vertex(60,57){5.66}
\Vertex(45,87){5.66} \SetWidth{1.0} \Line(45,87)(60,57)
\SetWidth{0.5} \Vertex(30,57){5.66} \SetWidth{1.0}
\Line(30,57)(45,87) \SetWidth{0.5} \Vertex(30,-3){5.66}
\SetWidth{1.0} \Line(30,-3)(30,27) \SetWidth{0.5}
\Vertex(30,27){5.66} \SetWidth{1.0} \Line(30,57)(30,27)
\end{picture}}}}

\def\tl53{{\scalebox{0.25}{ %%%%%%%%%%%%%%%%%%%%%%%%%%%%%%%%%\tl53
\begin{picture}(42,102)(8,-8)
\SetWidth{0.5} \SetColor{Black} \Vertex(30,57){5.66}
\Vertex(30,27){5.66} \SetWidth{1.0} \Line(30,57)(30,27)
\SetWidth{0.5} \Vertex(30,87){5.66} \SetWidth{1.0}
\Line(30,27)(45,-3) \SetWidth{0.5} \Vertex(15,-3){5.66}
\SetWidth{1.0} \Line(15,-3)(30,27) \Line(30,57)(30,87)
\SetWidth{0.5} \Vertex(45,-3){5.66}
\end{picture}}}}

\def\tm54{{\scalebox{0.25}{ %%%%%%%%%%%%%%%%%%%%%%%%%%%%%%%%%\tm54
\begin{picture}(42,72)(8,-38)
\SetWidth{0.5} \SetColor{Black} \Vertex(30,-3){5.66}
\SetWidth{1.0} \Line(30,27)(30,-3) \Line(30,-3)(45,-33)
\SetWidth{0.5} \Vertex(15,-33){5.66} \SetWidth{1.0}
\Line(15,-33)(30,-3) \SetWidth{0.5} \Vertex(45,-33){5.66}
\SetWidth{1.0} \Line(30,-33)(30,-3) \SetWidth{0.5}
\Vertex(30,-33){5.66} \Vertex(30,27){5.66}
\end{picture}}}}

\def\tn55{{\scalebox{0.25}{ %%%%%%%%%%%%%%%%%%%%%%%%%%%%%%%%%\tn55
\begin{picture}(42,72)(8,-38)
\SetWidth{0.5} \SetColor{Black} \Vertex(15,-33){5.66}
\Vertex(45,-33){5.66} \Vertex(30,27){5.66} \SetWidth{1.0}
\Line(45,-33)(45,-3) \SetWidth{0.5} \Vertex(45,-3){5.66}
\Vertex(15,-3){5.66} \SetWidth{1.0} \Line(30,27)(45,-3)
\Line(15,-3)(30,27) \Line(15,-3)(15,-33)
\end{picture}}}}

\def\tp56{{\scalebox{0.25}{ %%%%%%%%%%%%%%%%%%%%%%%%%%%%%%%%%\tp56
\begin{picture}(66,111)(0,0)
\SetWidth{0.5} \SetColor{Black} \Vertex(30,66){5.66}
\Vertex(45,36){5.66} \SetWidth{1.0} \Line(30,66)(45,36)
\Line(15,36)(30,66) \SetWidth{0.5} \Vertex(30,6){5.66}
\Vertex(60,6){5.66} \SetWidth{1.0} \Line(60,6)(45,36)
\SetWidth{0.5}
\SetWidth{1.0} \Line(45,36)(30,6) \SetWidth{0.5}
\Vertex(15,36){5.66}
\end{picture}}}}

\def\tq57{{\scalebox{0.25}{ %%%%%%%%%%%%%%%%%%%%%%%%%%%%%%%%%\tq57
\begin{picture}(81,111)(0,0)
\SetWidth{0.5} \SetColor{Black} \Vertex(45,36){5.66}
\Vertex(30,6){5.66} \Vertex(60,6){5.66} \SetWidth{1.0}
\Line(60,6)(45,36) \SetWidth{0.5}
\SetWidth{1.0} \Line(45,36)(30,6) \SetWidth{0.5}
\Vertex(75,36){5.66} \SetWidth{1.0} \Line(45,36)(60,66)
\Line(60,66)(75,36) \SetWidth{0.5} \Vertex(60,66){5.66}
\end{picture}}}}

\def\tr58{{\scalebox{0.25}{ %%%%%%%%%%%%%%%%%%%%%%%%%%%%%%%%%\tr58
\begin{picture}(81,111)(0,0)
\SetWidth{0.5} \SetColor{Black} \Vertex(60,6){5.66}
\Vertex(75,36){5.66} \SetWidth{1.0} \Line(60,66)(75,36)
\SetWidth{0.5} \Vertex(60,66){5.66}
\SetWidth{1.0} \Line(60,36)(60,66) \Line(60,6)(60,36)
\SetWidth{0.5} \Vertex(60,36){5.66} \Vertex(45,36){5.66}
\SetWidth{1.0} \Line(60,66)(45,36)
\end{picture}}}}

\def\ts59{{\scalebox{0.25}{ %%%%%%%%%%%%%%%%%%%%%%%%%%%%%%%%%\ts59
\begin{picture}(81,111)(0,0)
\SetWidth{0.5} \SetColor{Black}
\Vertex(75,36){5.66} \SetWidth{1.0} \Line(60,66)(75,36)
\SetWidth{0.5} \Vertex(60,66){5.66}
\SetWidth{1.0} \Line(60,36)(60,66) \SetWidth{0.5}
\Vertex(60,36){5.66} \Vertex(45,36){5.66} \SetWidth{1.0}
\Line(60,66)(45,36) \Line(75,6)(75,36) \SetWidth{0.5}
\Vertex(75,6){5.66}
\end{picture}}}}

\def\tt591{{\scalebox{0.25}{ %%%%%%%%%%%%%%%%%%%%%%%%%%%%%%%%%\tt591
\begin{picture}(81,111)(0,0)
\SetWidth{0.5} \SetColor{Black}
\Vertex(75,36){5.66} \SetWidth{1.0} \Line(60,66)(75,36)
\SetWidth{0.5} \Vertex(60,66){5.66}
\SetWidth{1.0} \Line(60,36)(60,66) \SetWidth{0.5}
\Vertex(60,36){5.66} \Vertex(45,36){5.66} \SetWidth{1.0}
\Line(60,66)(45,36) \SetWidth{0.5} \Vertex(45,6){5.66}
\SetWidth{1.0} \Line(45,6)(45,36)
\end{picture}}}}

%%%%%%%%%%%%%%%%%%%%%%%%%%%%%%%%%%%%%%%%%%%%%%%%%%%%%%%%%%%%%%%Decorated trees%

\def\ydec31{\!\!\includegraphics[scale=0.5]{ydec31.eps}}

\def\kyldec31{\!\!\includegraphics[scale=0.5]{kyldec31.eps}}

\def\yldec31{\!\!\includegraphics[scale=0.5]{ntreedec3.eps}}
%\def\yldec31{\!\!\includegraphics[scale=0.5]{yldec31.eps}}

%\def\xldec41r{\!\!\includegraphics[scale=0.5]{xtreedec1.eps}}
%\def\xldec41r{\!\!\includegraphics[scale=0.5]{xldec41r.eps}}

%\def\xyldec43{\!\!\includegraphics[scale=0.5]{mtreedec2.eps}}
%\def\xyldec43{\!\!\includegraphics[scale=0.5]{xyldec43.eps}}

%\def\treeUNO{\includegraphics[scale=0.4]{treeUNO.eps}}
%\def\treeDUO{\includegraphics[scale=0.4]{treeDUO.eps}}
%\def\treeTRES{\includegraphics[scale=0.5]{treeTRES.eps}}

%%%%%%%%%%%%%%%%%%%%%%%%%%%%%%%%%%%%%%%%%%%%%%%%%%%%%%%%%%%%%%%%%%%%%%%%%%%%

\def\lta1{{\scalebox{0.25}{ %%%%%%%%%%%%%%%%%%%%%%%%%%%%%%%%%\lta1
\begin{picture}(0,45)(60,-15)
\SetWidth{1.5} \SetColor{Black} \Line(60,30)(60,-15)
\end{picture}}}}

\def\ltb2{{\scalebox{0.25}{ %%%%%%%%%%%%%%%%%%%%%%%%%%%%%%%%%\ltb2
\begin{picture}(12,45)(53,-15)
\SetWidth{1.5} \SetColor{Black} \Line(60,30)(60,-15)
\SetWidth{0.5} \Vertex(60,0){5.66}
\end{picture}}}}

\def\ltc3{{\scalebox{0.25}{ %%%%%%%%%%%%%%%%%%%%%%%%%%%%%%%%%\ltc3
\begin{picture}(12,75)(53,-15)
\SetWidth{0.5} \SetColor{Black} \Vertex(60,30){5.66}
\SetWidth{1.5} \Line(60,60)(60,-15) \SetWidth{0.5}
\Vertex(60,0){5.66}
\end{picture}}}}

\def\ltd31{{\scalebox{0.25}{ %%%%%%%%%%%%%%%%%%%%%%%%%%%%%%%%%\ltd31
\begin{picture}(75,90)(0,0)
\SetWidth{0.5} \SetColor{Black}
\Vertex(60,15){5.66} \SetWidth{1.5} \Line(45,45)(60,15)
\Line(60,15)(75,45) \Line(60,15)(60,0)
\end{picture}}}}

\delete{
\begin{picture}(75,90)(0,0)
\SetWidth{0.5} \SetColor{Black} \% \Vertex(60,15){5.66}
\SetWidth{1.5} \Line(45,45)(60,15) \Line(60,15)(75,45)
\Line(60,15)(60,0)
\end{picture}}

\def\lte4{{\scalebox{0.25}{ %%%%%%%%%%%%%%%%%%%%%%%%%%%%%%%%%\lte4
\begin{picture}(66,120)(0,0)
\SetWidth{0.5} \SetColor{Black}
\Vertex(60,45){5.66} \Vertex(60,75){5.66} \SetWidth{1.5}
\Line(60,105)(60,0) \SetWidth{0.5} \Vertex(60,15){5.66}
\end{picture}}}}

\def\ltf41l{{\scalebox{0.25}{ %%%%%%%%%%%%%%%%%%%%%%%%%%%%%%%%%\ltf41l
\begin{picture}(75,120)(0,0)
\SetWidth{0.5} \SetColor{Black}
\Vertex(60,15){5.66} \SetWidth{1.5} \Line(60,0)(60,15)
\Line(60,15)(45,45) \Line(60,15)(75,45) \SetWidth{0.5}
\Vertex(45,45){5.66} \SetWidth{1.5} \Line(45,45)(45,75)
\end{picture}}}}

\def\ltg41r{{\scalebox{0.25}{ %%%%%%%%%%%%%%%%%%%%%%%%%%%%%%%%%\ltg41r
\begin{picture}(81,120)(0,0)
\SetWidth{0.5} \SetColor{Black}
\Vertex(60,15){5.66} \SetWidth{1.5} \Line(75,45)(75,75)
\Line(60,0)(60,15) \Line(60,15)(45,45) \Line(60,15)(75,45)
\SetWidth{0.5} \Vertex(75,45){5.66}
\end{picture}}}}

\delete{
\begin{picture}(81,120)(0,0)
\SetWidth{0.5} \SetColor{Black}
\Vertex(60,15){5.66} \SetWidth{1.5} \Line(75,45)(75,75)
\Line(60,0)(60,15) \Line(60,15)(45,45) \Line(60,15)(75,45)
\SetWidth{0.5} \Vertex(75,45){5.66}
\end{picture}}

\def\lth42{{\scalebox{0.25}{ %%%%%%%%%%%%%%%%%%%%%%%%%%%%%%%%%\lth42
\begin{picture}(75,150)(0,0)
\SetWidth{0.5} \SetColor{Black}
\Vertex(60,45){5.66} \SetWidth{1.5} \Line(60,30)(60,45)
\Line(60,45)(45,75) \Line(60,45)(75,75) \SetWidth{0.5}
\Vertex(60,15){5.66} \SetWidth{1.5} \Line(60,0)(60,30)
\end{picture}}}}

\def\lti43{{\scalebox{0.25}{ %%%%%%%%%%%%%%%%%%%%%%%%%%%%%%%%%\lti43
\begin{picture}(75,150)(0,0)
\SetWidth{0.5} \SetColor{Black}
\SetWidth{1.5} \Line(60,30)(60,45) \SetWidth{0.5}
\Vertex(60,15){5.66} \SetWidth{1.5} \Line(60,0)(60,30)
\Line(60,15)(75,45) \Line(60,15)(45,45)
\end{picture}}}}

%%%%%%%%%%%%%%%%%%%%%%%%%%%%%%%%%%%%%%%%%%%%%%%%%%%

%%%%%%%%%%%%%%%%%%%%%%%%%%%%%%%%%%%%%%%%%%%55
%%%%%%%%% Motzkin paths

\def\ma1{{\scalebox{0.35}{ %%%%%%%%%%%%%%%%%%%%%%%%%%%%%%%%%\ta1
\begin{picture}(45,12)(25,-38)
\SetColor{Black}
\SetWidth{0.5} \Vertex(45,-33){5.66}
\end{picture}}}}

\def\mb2{{\scalebox{0.35}{ %%%%%%%%%%%%%%%%%%%%%%%%%%%%%%%%%\tb2
\begin{picture}(55,32)(5,-38)
\SetColor{Black}
\SetWidth{0.5} \Vertex(15,-33){5.66}
\SetWidth{1.0} \Line(15,-33)(45,-33)
\SetWidth{0.5} \Vertex(45,-33){5.66}
\end{picture}}}}

\def\xmb2{{\scalebox{0.35}{ %%%%%%%%%%%%%%%%%%%%%%%%%%%%%%%%%\tb2
\begin{picture}(55,32)(5,-38)
\SetColor{Black}
\SetWidth{0.5} \Vertex(15,-33){5.66}
\SetWidth{1.0} \Line(15,-33)(45,-33)
\put(25,-23){\em\huge x}
\SetWidth{0.5} \Vertex(45,-33){5.66}
\end{picture}}}}

\def\xpmb2{{\scalebox{0.35}{ %%%%%%%%%%%%%%%%%%%%%%%%%%%%%%%%%\tb2
\begin{picture}(55,32)(5,-38)
\SetColor{Black}
\SetWidth{0.5} \Vertex(15,-33){5.66}
\SetWidth{1.0} \Line(15,-33)(45,-33)
\put(25,-23){\em\huge $x'$}
\SetWidth{0.5} \Vertex(45,-33){5.66}
\end{picture}}}}

\def\motz1{{\scalebox{0.35}{ %%%%%%%%%%%%%%%%%%%%%%%%%%%%%%%%%\tb2
\begin{picture}(55,32)(5,-38)
\SetColor{Black}
\SetWidth{0.5} \Vertex(15,-33){5.66}
\SetWidth{1.0} \Line(15,-33)(45,-33)
\put(25,-23){\em\huge x_{i_1}}
\SetWidth{0.5} \Vertex(45,-33){5.66}
\end{picture}}}}

\def\motzb{{\scalebox{0.35}{ %%%%%%%%%%%%%%%%%%%%%%%%%%%%%%%%%\tb2
\begin{picture}(55,32)(5,-38)
\SetColor{Black}
\SetWidth{0.5} \Vertex(15,-33){5.66}
\SetWidth{1.0} \Line(15,-33)(45,-33)
\put(25,-23){\em\huge x_{i_{b-1}}}
\SetWidth{0.5} \Vertex(45,-33){5.66}
\end{picture}}}}

\def\ymb2{{\scalebox{0.35}{ %%%%%%%%%%%%%%%%%%%%%%%%%%%%%%%%%\tb2
\begin{picture}(55,32)(5,-38)
\SetColor{Black}
\SetWidth{0.5} \Vertex(15,-33){5.66}
\SetWidth{1.0} \Line(15,-33)(45,-33)
\put(25,-23){\em\huge y}
\SetWidth{0.5} \Vertex(45,-33){5.66}
\end{picture}}}}

% \xmb2 \xymc3 \xmf32 \xmh43

\def\mc3{{\scalebox{0.35}{ %%%%%%%%%%%%%%%%%%%%%%%%%%%%%%%%%\tc3
\begin{picture}(85,42)(5,-38)
\SetColor{Black}
\SetWidth{0.5} \Vertex(15,-33){5.66}
\SetWidth{1.0} \Line(15,-33)(45,-33)
\SetWidth{0.5} \Vertex(45,-33){5.66}
\SetWidth{1.0} \Line(45,-33)(75,-33)
\SetWidth{0.5} \Vertex(75,-33){5.66}
\end{picture}}}}

\def\xymc3{{\scalebox{0.35}{ %%%%%%%%%%%%%%%%%%%%%%%%%%%%%%%%%\tc3
\begin{picture}(85,42)(5,-38)
\SetColor{Black}
\SetWidth{0.5} \Vertex(15,-33){5.66}
\SetWidth{1.0} \Line(15,-33)(45,-33)
\put(25,-23){\em\huge x}
\SetWidth{0.5} \Vertex(45,-33){5.66}
\SetWidth{1.0} \Line(45,-33)(75,-33)
\put(55,-23){\em \huge y}
\SetWidth{0.5} \Vertex(75,-33){5.66}
\end{picture}}}}

\def\md31{{\scalebox{0.35}{ %%%%%%%%%%%%%%%%%%%%%%%%%%%%%%%%%\td31
\begin{picture}(100,42)(20,-38)
\SetColor{Black}
\SetWidth{0.5} \Vertex(60,-3){5.66}
\Vertex(30,-33){5.66} \Vertex(90,-33){5.66} \SetWidth{1.0}
\Line(60,-3)(30,-33) \Line(90,-33)(60,-3)
\end{picture}}}}

\def\amd31{{\scalebox{0.35}{ %%%%%%%%%%%%%%%%%%%%%%%%%%%%%%%%%\td31
\begin{picture}(100,42)(20,-38)
\SetColor{Black}
\SetWidth{0.5} \Vertex(60,-3){5.66}
\Vertex(30,-33){5.66} \Vertex(90,-33){5.66} \SetWidth{1.0}
\Line(60,-3)(30,-33) \put(30,-18){\em\huge $\omega$}
\Line(90,-33)(60,-3) \put(85,-18){\em\huge $\omega$}
\end{picture}}}}

\def\bmd31{{\scalebox{0.35}{ %%%%%%%%%%%%%%%%%%%%%%%%%%%%%%%%%\td31
\begin{picture}(100,42)(20,-38)
\SetColor{Black}
\SetWidth{0.5} \Vertex(60,-3){5.66}
\Vertex(30,-33){5.66} \Vertex(90,-33){5.66} \SetWidth{1.0}
\Line(60,-3)(30,-33) \put(30,-18){\em\huge $\beta$}
\Line(90,-33)(60,-3) \put(85,-18){\em\huge $\beta$}
\end{picture}}}}

\def\me4{{\scalebox{0.35}{ %%%%%%%%%%%%%%%%%%%%%%%%%%%%%%%%%\te4
\begin{picture}(110,42)(0,-38)
\SetColor{Black}
\SetWidth{0.5} \Vertex(10,-33){5.66}
\SetWidth{1.0} \Line(10,-33)(40,-33)
\SetWidth{0.5} \Vertex(40,-33){5.66}
\SetWidth{1.0} \Line(40,-33)(70,-33)
\SetWidth{0.5} \Vertex(70,-33){5.66}
\SetWidth{1.0} \Line(70,-33)(100,-33)
\SetWidth{0.5} \Vertex(100,-33){5.66}
\end{picture}}}}

\def\mf41{{\scalebox{0.35}{ %%%%%%%%%%%%%%%%%%%%%%%%%%%%%%%%%\tf41
\begin{picture}(110,42)(0,-38)
\SetColor{Black}
\SetWidth{0.5} \Vertex(10,-33){5.66}
\SetWidth{1.0} \Line(10,-33)(40,-33)
\SetWidth{0.5} \Vertex(40,-33){5.66}
\Vertex(70,-3){5.66} \Vertex(100,-33){5.66}
\SetWidth{1.0}
\Line(40,-33)(70,-3) \Line(100,-33)(70,-3)
\end{picture}}}}

\def\xmf41{{\scalebox{0.35}{ %%%%%%%%%%%%%%%%%%%%%%%%%%%%%%%%%\tf41
\begin{picture}(110,42)(0,-38)
\SetColor{Black}
\SetWidth{0.5} \Vertex(10,-33){5.66}
\SetWidth{1.0} \Line(10,-33)(40,-33)
\put(20,-23){\em\huge x}
\SetWidth{0.5} \Vertex(40,-33){5.66}
\Vertex(70,-3){5.66} \Vertex(100,-33){5.66}
\SetWidth{1.0}
\Line(40,-33)(70,-3) \Line(100,-33)(70,-3)
\end{picture}}}}

\def\axmf41{{\scalebox{0.35}{ %%%%%%%%%%%%%%%%%%%%%%%%%%%%%%%%%\tf41
\begin{picture}(110,42)(0,-38)
\SetColor{Black}
\SetWidth{0.5} \Vertex(10,-33){5.66}
\SetWidth{1.0} \Line(10,-33)(40,-33)
\put(20,-23){\em\huge x}
\SetWidth{0.5} \Vertex(40,-33){5.66}
\Vertex(70,-3){5.66} \Vertex(100,-33){5.66}
\SetWidth{1.0}
\Line(40,-33)(70,-3) \Line(100,-33)(70,-3)
\put(40,-18){\em \huge $\omega$}
\put(90,-18){\em \huge $\omega$}
\end{picture}}}}

\def\mg42{{\scalebox{0.35}{ %%%%%%%%%%%%%%%%%%%%%%%%%%%%%%%%%\tg42
\begin{picture}(110,42)(0,-38)
\SetColor{Black}
\SetWidth{0.5} \Vertex(10,-33){5.66}
\SetWidth{1.0} \Line(70,-33)(100,-33)
\SetWidth{0.5} \Vertex(70,-33){5.66}
\Vertex(40,-3){5.66} \Vertex(100,-33){5.66}
\SetWidth{1.0}
\Line(10,-33)(40,-3) \Line(70,-33)(40,-3)
\end{picture}}}}

\def\mh43{{\scalebox{0.35}{ %%%%%%%%%%%%%%%%%%%%%%%%%%%%%%%%%\th43
\begin{picture}(110,42)(0,-38)
\SetColor{Black}
\SetWidth{0.5} \Vertex(10,-33){5.66}
\SetWidth{1.0} \Line(10,-33)(40,-3)
\SetWidth{0.5} \Vertex(40,-3){5.66}
\SetWidth{1.0} \Line(40,-3)(70,-3)
\SetWidth{0.5} \Vertex(70,-3){5.66}
\SetWidth{1.0} \Line(70,-3)(100,-33)
\SetWidth{0.5} \Vertex(100,-33){5.66}
\end{picture}}}}

\def\xmh43{{\scalebox{0.35}{ %%%%%%%%%%%%%%%%%%%%%%%%%%%%%%%%%\th43
\begin{picture}(110,42)(0,-38)
\SetColor{Black}
\SetWidth{0.5} \Vertex(10,-33){5.66}
\SetWidth{1.0} \Line(10,-33)(40,-3)
\SetWidth{0.5} \Vertex(40,-3){5.66}
\SetWidth{1.0} \Line(40,-3)(70,-3)
\put(50,3){\em\huge x}
\SetWidth{0.5} \Vertex(70,-3){5.66}
\SetWidth{1.0} \Line(70,-3)(100,-33)
\SetWidth{0.5} \Vertex(100,-33){5.66}
\end{picture}}}}

\def\ymh43{{\scalebox{0.35}{ %%%%%%%%%%%%%%%%%%%%%%%%%%%%%%%%%\th43
\begin{picture}(110,42)(0,-38)
\SetColor{Black}
\SetWidth{0.5} \Vertex(10,-33){5.66}
\SetWidth{1.0} \Line(10,-33)(40,-3)
\SetWidth{0.5} \Vertex(40,-3){5.66}
\SetWidth{1.0} \Line(40,-3)(70,-3)
\put(50,3){\em\huge y}
\SetWidth{0.5} \Vertex(70,-3){5.66}
\SetWidth{1.0} \Line(70,-3)(100,-33)
\SetWidth{0.5} \Vertex(100,-33){5.66}
\end{picture}}}}

\def\axmh43{{\scalebox{0.35}{ %%%%%%%%%%%%%%%%%%%%%%%%%%%%%%%%%\th43
\begin{picture}(110,42)(0,-38)
\SetColor{Black}
\SetWidth{0.5} \Vertex(10,-33){5.66}
\SetWidth{1.0} \Line(10,-33)(40,-3) \put(13,-17){\em \huge $\omega$}
\SetWidth{0.5} \Vertex(40,-3){5.66}
\SetWidth{1.0} \Line(40,-3)(70,-3)
\put(50,3){\em\huge x}
\SetWidth{0.5} \Vertex(70,-3){5.66}
\SetWidth{1.0} \Line(70,-3)(100,-33) \put(90,-17){\em\huge $\omega$}
\SetWidth{0.5} \Vertex(100,-33){5.66}
\end{picture}}}}

\def\mi5{{\scalebox{0.35}{ %%%%%%%%%%%%%%%%%%%%%%%%%%%%%%%%%\th43
\begin{picture}(140,42)(0,-38)
\SetColor{Black}
\SetWidth{0.5} \Vertex(10,-33){5.66}
\SetWidth{1.0} \Line(10,-33)(40,-33)
\SetWidth{0.5} \Vertex(40,-33){5.66}
\SetWidth{1.0} \Line(40,-33)(70,-33)
\SetWidth{0.5} \Vertex(70,-33){5.66}
\SetWidth{1.0} \Line(70,-33)(100,-33)
\SetWidth{0.5} \Vertex(100,-33){5.66}
\SetWidth{1.0} \Line(100,-33)(130,-33)
\SetWidth{0.5} \Vertex(130,-33){5.66}
\end{picture}}}}

\def\mj51{{\scalebox{0.35}{ %%%%%%%%%%%%%%%%%%%%%%%%%%%%%%%%%\th43
\begin{picture}(140,42)(0,-38)
\SetColor{Black}
\SetWidth{0.5} \Vertex(10,-33){5.66}
\SetWidth{1.0} \Line(10,-33)(40,-3)
\SetWidth{0.5} \Vertex(40,-3){5.66}
\SetWidth{1.0} \Line(40,-3)(70,-33)
\SetWidth{0.5} \Vertex(70,-33){5.66}
\SetWidth{1.0} \Line(70,-33)(100,-33)
\SetWidth{0.5} \Vertex(100,-33){5.66}
\SetWidth{1.0} \Line(100,-33)(130,-33)
\SetWidth{0.5} \Vertex(130,-33){5.66}
\end{picture}}}}

\def\mk52{{\scalebox{0.35}{ %%%%%%%%%%%%%%%%%%%%%%%%%%%%%%%%%\th43
\begin{picture}(140,42)(0,-38)
\SetColor{Black}
\SetWidth{0.5} \Vertex(10,-33){5.66}
\SetWidth{1.0} \Line(10,-33)(40,-33)
\SetWidth{0.5} \Vertex(40,-33){5.66}
\SetWidth{1.0} \Line(40,-33)(70,-3)
\SetWidth{0.5} \Vertex(70,-3){5.66}
\SetWidth{1.0} \Line(70,-3)(100,-33)
\SetWidth{0.5} \Vertex(100,-33){5.66}
\SetWidth{1.0} \Line(100,-33)(130,-33)
\SetWidth{0.5} \Vertex(130,-33){5.66}
\end{picture}}}}

\def\ml53{{\scalebox{0.35}{ %%%%%%%%%%%%%%%%%%%%%%%%%%%%%%%%%\th43
\begin{picture}(140,42)(0,-38)
\SetColor{Black}
\SetWidth{0.5} \Vertex(10,-33){5.66}
\SetWidth{1.0} \Line(10,-33)(40,-33)
\SetWidth{0.5} \Vertex(40,-33){5.66}
\SetWidth{1.0} \Line(40,-33)(70,-33)
\SetWidth{0.5} \Vertex(70,-33){5.66}
\SetWidth{1.0} \Line(70,-33)(100,-3)
\SetWidth{0.5} \Vertex(100,-3){5.66}
\SetWidth{1.0} \Line(100,-3)(130,-33)
\SetWidth{0.5} \Vertex(130,-33){5.66}
\end{picture}}}}

\def\mm54{{\scalebox{0.35}{ %%%%%%%%%%%%%%%%%%%%%%%%%%%%%%%%%\th43
\begin{picture}(140,42)(0,-38)
\SetColor{Black}
\SetWidth{0.5} \Vertex(10,-33){5.66}
\SetWidth{1.0} \Line(10,-33)(40,-3)
\SetWidth{0.5} \Vertex(40,-3){5.66}
\SetWidth{1.0} \Line(40,-3)(70,-3)
\SetWidth{0.5} \Vertex(70,-3){5.66}
\SetWidth{1.0} \Line(70,-3)(100,-33)
\SetWidth{0.5} \Vertex(100,-33){5.66}
\SetWidth{1.0} \Line(100,-33)(130,-33)
\SetWidth{0.5} \Vertex(130,-33){5.66}
\end{picture}}}}

\def\xymm54{{\scalebox{0.35}{ %%%%%%%%%%%%%%%%%%%%%%%%%%%%%%%%%\th43
\begin{picture}(140,42)(0,-38)
\SetColor{Black}
\SetWidth{0.5} \Vertex(10,-33){5.66}
\SetWidth{1.0} \Line(10,-33)(40,-3)
\SetWidth{0.5} \Vertex(40,-3){5.66}
\SetWidth{1.0} \Line(40,-3)(70,-3)
\put(50,3){\em\huge x}
\SetWidth{0.5} \Vertex(70,-3){5.66}
\SetWidth{1.0} \Line(70,-3)(100,-33)
\SetWidth{0.5} \Vertex(100,-33){5.66}
\SetWidth{1.0} \Line(100,-33)(130,-33)
\put(110,-27){\em\huge y}
\SetWidth{0.5} \Vertex(130,-33){5.66}
\end{picture}}}}

\def\mn55{{\scalebox{0.35}{ %%%%%%%%%%%%%%%%%%%%%%%%%%%%%%%%%\th43
\begin{picture}(140,42)(0,-38)
\SetColor{Black}
\SetWidth{0.5} \Vertex(10,-33){5.66}
\SetWidth{1.0} \Line(10,-33)(40,-33)
\SetWidth{0.5} \Vertex(40,-33){5.66}
\SetWidth{1.0} \Line(40,-33)(70,-3)
\SetWidth{0.5} \Vertex(70,-3){5.66}
\SetWidth{1.0} \Line(70,-3)(100,-3)
\SetWidth{0.5} \Vertex(100,-3){5.66}
\SetWidth{1.0} \Line(100,-3)(130,-33)
\SetWidth{0.5} \Vertex(130,-33){5.66}
\end{picture}}}}

\def\xymn55{{\scalebox{0.35}{ %%%%%%%%%%%%%%%%%%%%%%%%%%%%%%%%%\th43
\begin{picture}(140,42)(0,-38)
\SetColor{Black}
\SetWidth{0.5} \Vertex(10,-33){5.66}
\SetWidth{1.0} \Line(10,-33)(40,-33)
\put(20,-27){\em\huge x}
\SetWidth{0.5} \Vertex(40,-33){5.66}
\SetWidth{1.0} \Line(40,-33)(70,-3)
\SetWidth{0.5} \Vertex(70,-3){5.66}
\SetWidth{1.0} \Line(70,-3)(100,-3)
\put(80,3){\em\huge y}
\SetWidth{0.5} \Vertex(100,-3){5.66}
\SetWidth{1.0} \Line(100,-3)(130,-33)
\SetWidth{0.5} \Vertex(130,-33){5.66}
\end{picture}}}}

\def\mo56{{\scalebox{0.35}{ %%%%%%%%%%%%%%%%%%%%%%%%%%%%%%%%%\th43
\begin{picture}(140,42)(0,-38)
\SetColor{Black}
\SetWidth{0.5} \Vertex(10,-33){5.66}
\SetWidth{1.0} \Line(10,-33)(40,-3)
\SetWidth{0.5} \Vertex(40,-3){5.66}
\SetWidth{1.0} \Line(40,-3)(70,-3)
\SetWidth{0.5} \Vertex(70,-3){5.66}
\SetWidth{1.0} \Line(70,-3)(100,-3)
\SetWidth{0.5} \Vertex(100,-3){5.66}
\SetWidth{1.0} \Line(100,-3)(130,-33)
\SetWidth{0.5} \Vertex(130,-33){5.66}
\end{picture}}}}

\def\xymo56{{\scalebox{0.35}{ %%%%%%%%%%%%%%%%%%%%%%%%%%%%%%%%%\th43
\begin{picture}(140,42)(0,-38)
\SetColor{Black}
\SetWidth{0.5} \Vertex(10,-33){5.66}
\SetWidth{1.0} \Line(10,-33)(40,-3)
\SetWidth{0.5} \Vertex(40,-3){5.66}
\SetWidth{1.0} \Line(40,-3)(70,-3)
\put(50,3){\em\huge x}
\SetWidth{0.5} \Vertex(70,-3){5.66}
\SetWidth{1.0} \Line(70,-3)(100,-3)
\put(80,3){\em\huge y}
\SetWidth{0.5} \Vertex(100,-3){5.66}
\SetWidth{1.0} \Line(100,-3)(130,-33)
\SetWidth{0.5} \Vertex(130,-33){5.66}
\end{picture}}}}

\def\mpp57{{\scalebox{0.35}{ %%%%%%%%%%%%%%%%%%%%%%%%%%%%%%%%%\th43
\begin{picture}(140,42)(0,-38)
\SetColor{Black}
\SetWidth{0.5} \Vertex(10,-33){5.66}
\SetWidth{1.0} \Line(10,-33)(40,-3)
\SetWidth{0.5} \Vertex(40,-3){5.66}
\SetWidth{1.0} \Line(40,-3)(70,-33)
\SetWidth{0.5} \Vertex(70,-33){5.66}
\SetWidth{1.0} \Line(70,-33)(100,-3)
\SetWidth{0.5} \Vertex(100,-3){5.66}
\SetWidth{1.0} \Line(100,-3)(130,-33)
\SetWidth{0.5} \Vertex(130,-33){5.66}
\end{picture}}}}

\def\abmpp57{{\scalebox{0.35}{ %%%%%%%%%%%%%%%%%%%%%%%%%%%%%%%%%\th43
\begin{picture}(140,62)(0,-38)
\SetColor{Black}
\SetWidth{0.5} \Vertex(10,-33){5.66}
\SetWidth{1.0} \Line(10,-33)(40,-3) \put(12,-13){\em \huge $\omega$}
\SetWidth{0.5} \Vertex(40,-3){5.66}
\SetWidth{1.0} \Line(40,-3)(70,-33) \put(50,-13){\em\huge $\omega$}
\SetWidth{0.5} \Vertex(70,-33){5.66}
\SetWidth{1.0} \Line(70,-33)(100,-3) \put(75,-13){\em\huge $\beta$}
\SetWidth{0.5} \Vertex(100,-3){5.66}
\SetWidth{1.0} \Line(100,-3)(130,-33) \put(120,-13){\em\huge $\beta$}
\SetWidth{0.5} \Vertex(130,-33){5.66}
\end{picture}}}}

\def\mq58{{\scalebox{0.35}{ %%%%%%%%%%%%%%%%%%%%%%%%%%%%%%%%%\th43
\begin{picture}(140,62)(0,-38)
\SetColor{Black}
\SetWidth{0.5} \Vertex(10,-33){5.66}
\SetWidth{1.0} \Line(10,-33)(40,-3)
\SetWidth{0.5} \Vertex(40,-3){5.66}
\SetWidth{1.0} \Line(40,-3)(70,23)
\SetWidth{0.5} \Vertex(70,23){5.66}
\SetWidth{1.0} \Line(70,23)(100,-3)
\SetWidth{0.5} \Vertex(100,-3){5.66}
\SetWidth{1.0} \Line(100,-3)(130,-33)
\SetWidth{0.5} \Vertex(130,-33){5.66}
\end{picture}}}}

\def\abmq58{{\scalebox{0.35}{ %%%%%%%%%%%%%%%%%%%%%%%%%%%%%%%%%\th43
\begin{picture}(140,62)(0,-38)
\SetColor{Black}
\SetWidth{0.5} \Vertex(10,-33){5.66}
\SetWidth{1.0} \Line(10,-33)(40,-3)
\put(10,-13){\em\huge $\omega$}
\SetWidth{0.5} \Vertex(40,-3){5.66}
\SetWidth{1.0} \Line(40,-3)(70,23)
\put(40,13){\em\huge $\beta$}
\SetWidth{0.5} \Vertex(70,23){5.66}
\SetWidth{1.0} \Line(70,23)(100,-3) \put(90,13){\em\huge$\beta$}
\SetWidth{0.5} \Vertex(100,-3){5.66}
\SetWidth{1.0} \Line(100,-3)(130,-33)
\put(120,-13){\em\huge$\omega$}
\SetWidth{0.5} \Vertex(130,-33){5.66}
\end{picture}}}}

\def\uuxdyd{{\scalebox{0.35}{ %%%%%%%%%%%%%%%%%%%%%%%%%%%%%%%%%\th43
\begin{picture}(200,62)(-30,-68)
\SetColor{Black}
\SetWidth{0.5} \Vertex(10,-33){5.66}
\SetWidth{1.0} \Line(10,-33)(40,-3)
\SetWidth{0.5} \Vertex(40,-3){5.66}
\SetWidth{1.0} \Line(40,-3)(70,-3)
\put(50,3){\em\huge x}
\SetWidth{0.5} \Vertex(70,-3){5.66}
\SetWidth{1.0} \Line(70,-3)(100,-33)
\SetWidth{0.5} \Vertex(100,-33){5.66}
\SetWidth{1.0} \Line(100,-33)(130,-33)
\put(110,-27){\em\huge y}
\SetWidth{0.5} \Vertex(130,-33){5.66}
\Vertex(-20,-63){5.66} \Vertex(160,-63){5.66}
\SetWidth{1.0} \Line(-20,-63)(10,-33)
\Line(130,-33)(160,-63)
\end{picture}}}}

\def\uxuydd{{\scalebox{0.35}{ %%%%%%%%%%%%%%%%%%%%%%%%%%%%%%%%%\th43
\begin{picture}(200,62)(-30,-68)
\SetColor{Black}
\SetWidth{0.5} \Vertex(10,-33){5.66}
\SetWidth{1.0} \Line(10,-33)(40,-33)
\put(20,-27){\em\huge x}
\SetWidth{0.5} \Vertex(40,-33){5.66}
\SetWidth{1.0} \Line(40,-33)(70,-3)
\SetWidth{0.5} \Vertex(70,-3){5.66}
\SetWidth{1.0} \Line(70,-3)(100,-3)
\put(80,3){\em\huge y}
\SetWidth{0.5} \Vertex(100,-3){5.66}
\SetWidth{1.0} \Line(100,-3)(130,-33)
\SetWidth{0.5} \Vertex(130,-33){5.66}
\Vertex(-20,-63){5.66} \Vertex(160,-63){5.66}
\SetWidth{1.0} \Line(-20,-63)(10,-33)
\Line(130,-33)(160,-63)
\end{picture}}}}

\def\bigdecm{{\scalebox{0.5}{ %%%%%%%%%%%%%%%%%%%%%%%%%%%%%%%%%\bigdecm
\begin{picture}(140,82)(250,-10)
\SetColor{Black}
\DashLine(10,-33)(610,-33){10}
%\DashLine(10,-3)(610,-3){10}
%\DashLine(10,23)(610,23){10}
%\DashLine(10,53)(610,53){10}

\SetColor{Black}
\SetWidth{0.5} \Vertex(10,-33){5.66}
\SetWidth{1.0} \Line(10,-33)(40,-3)
\put(10,-13){\em\huge $\alpha$}
\SetWidth{0.5} \Vertex(40,-3){5.66}
\SetWidth{1.0} \Line(40,-3)(70,23)
\put(40,13){\em\huge $\beta$}
\SetWidth{0.5} \Vertex(70,23){5.66}
\SetWidth{1.0} \Line(70,23)(100,23)
\put(80,33){\em\huge $a$}
\SetWidth{0.5} \Vertex(100,23){5.66}
\SetWidth{1.0} \Line(100,23)(130,53)
\put(100,43){\em\huge $\gamma$}
\SetWidth{0.5} \Vertex(130,53){5.66}
\SetWidth{1.0} \Line(130,53)(160,53)
\put(140,63){\em\huge $b$}
\SetWidth{0.5} \Vertex(160,53){5.66}
\SetWidth{1.0} \Line(160,53)(190,53)
\put(170,63){\em\huge $c$}
\SetWidth{0.5} \Vertex(190,53){5.66}
\SetWidth{1.0} \Line(190,53)(220,23)
\put(205,43){\em\huge $\gamma$}
\SetWidth{0.5} \Vertex(220,23){5.66}
\SetWidth{1.0} \Line(220,23)(250,23)
\put(230,33){\em\huge $d$}
\SetWidth{0.5} \Vertex(250,23){5.66}
\SetWidth{1.0} \Line(250,23)(280,-3)
\put(265,13){\em\huge $\beta$}
\SetWidth{0.5} \Vertex(280,-3){5.66}
\SetWidth{1.0} \Line(280,-3)(310,-3)
\put(290,03){\em\huge $e$}
\SetWidth{0.5} \Vertex(310,-3){5.66}
\SetWidth{1.0} \Line(310,-3)(340,23)
\put(315,13){\em\huge $\delta$}
\SetWidth{0.5} \Vertex(340,23){5.66}
\SetWidth{1.0} \Line(340,23)(370,23)
\put(350,33){\em\huge $f$}
\SetWidth{0.5} \Vertex(370,23){5.66}
\SetWidth{1.0} \Line(370,23)(400,53)
\put(378,38){\em\huge $\sigma$}
\SetWidth{0.5} \Vertex(400,53){5.66}
\SetWidth{1.0} \Line(400,53)(430,53)
\put(410,63){\em\huge $g$}
\SetWidth{0.5} \Vertex(430,53){5.66}
\SetWidth{1.0} \Line(430,53)(460,23)
\put(450,38){\em\huge $\sigma$}
\SetWidth{0.5} \Vertex(460,23){5.66}
\SetWidth{1.0} \Line(460,23)(490,53)
\put(468,38){\em\huge $\tau$}
\SetWidth{0.5} \Vertex(490,53){5.66}
\SetWidth{1.0} \Line(490,53)(520,53)
\put(500,63){\em\huge $h$}
\SetWidth{0.5} \Vertex(520,53){5.66}
\SetWidth{1.0} \Line(520,53)(550,23)
\put(540,38){\em\huge $\tau$}
\SetWidth{0.5} \Vertex(550,23){5.66}
\SetWidth{1.0} \Line(550,23)(580,-3)
\put(570,8){\em\huge $\delta$}
\SetWidth{0.5} \Vertex(580,-3){5.66}
\SetWidth{1.0} \Line(580,-3)(610,-33)
\put(600,-18){\em\huge $\alpha$}
\SetWidth{0.5} \Vertex(610,-33){5.66}

\end{picture}}}}

\def\bigdecl{{\scalebox{0.5}{ %%%%%%%%%%%%%%%%%%%%%%%%%%%%%%%%%\bigdecl
\begin{picture}(140,82)(250,-10)
\SetColor{Black}
\DashLine(10,-33)(610,-33){10}
%\DashLine(10,-3)(610,-3){10}
%\DashLine(10,23)(610,23){10}
%\DashLine(10,53)(610,53){10}

\SetColor{Black}
\SetWidth{0.5} \Vertex(10,-33){5.66}
\SetWidth{1.0} \Line(10,-33)(40,-3)
\SetWidth{0.5} \Vertex(40,-3){5.66}
\SetWidth{1.0} \Line(40,-3)(70,23)
\SetWidth{0.5} \Vertex(70,23){5.66}
\SetWidth{1.0} \Line(70,23)(100,23)
\put(80,33){\em\huge $a$}
\SetWidth{0.5} \Vertex(100,23){5.66}
\SetWidth{1.0} \Line(100,23)(130,53)
\SetWidth{0.5} \Vertex(130,53){5.66}
\SetWidth{1.0} \Line(130,53)(160,53)
\put(140,63){\em\huge $b$}
\SetWidth{0.5} \Vertex(160,53){5.66}
\SetWidth{1.0} \Line(160,53)(190,53)
\put(170,63){\em\huge $c$}
\SetWidth{0.5} \Vertex(190,53){5.66}
\SetWidth{1.0} \Line(190,53)(220,23)
\SetWidth{0.5} \Vertex(220,23){5.66}
\SetWidth{1.0} \Line(220,23)(250,23)
\put(230,33){\em\huge $d$}
\SetWidth{0.5} \Vertex(250,23){5.66}
\SetWidth{1.0} \Line(250,23)(280,-3)
\SetWidth{0.5} \Vertex(280,-3){5.66}
\SetWidth{1.0} \Line(280,-3)(310,-3)
\put(290,03){\em\huge $e$}
\SetWidth{0.5} \Vertex(310,-3){5.66}
\SetWidth{1.0} \Line(310,-3)(340,23)
\SetWidth{0.5} \Vertex(340,23){5.66}
\SetWidth{1.0} \Line(340,23)(370,53)
\SetWidth{0.5} \Vertex(370,53){5.66}
\SetWidth{1.0} \Line(370,53)(400,53)
\put(380,58){\em\huge $f$}
\SetWidth{0.5} \Vertex(400,53){5.66}
\SetWidth{1.0} \Line(400,53)(430,23)
\SetWidth{0.5} \Vertex(430,23){5.66}
\SetWidth{1.0} \Line(430,23)(460,23)
\put(440,33){\em\huge $g$}
\SetWidth{0.5} \Vertex(460,23){5.66}
\SetWidth{1.0} \Line(460,23)(490,53)
\SetWidth{0.5} \Vertex(490,53){5.66}
\SetWidth{1.0} \Line(490,53)(520,53)
\put(500,63){\em\huge $h$}
\SetWidth{0.5} \Vertex(520,53){5.66}
\SetWidth{1.0} \Line(520,53)(550,23)
\SetWidth{0.5} \Vertex(550,23){5.66}
\SetWidth{1.0} \Line(550,23)(580,-3)
\SetWidth{0.5} \Vertex(580,-3){5.66}
\SetWidth{1.0} \Line(580,-3)(610,-33)
\SetWidth{0.5} \Vertex(610,-33){5.66}

\end{picture}}}}

\def\bigdect{{\scalebox{0.4}{ %%%%%%%%%%%%%%%%%%%%%%%%%%%%%%%%%\bigdect
\begin{picture}(140,120)(0,-60)
\SetColor{Black}
%from alpha
\SetWidth{0.5} \Vertex(70,60){5.66}
\put(48,60){\em\huge$\alpha$}
\SetWidth{1.0} \Line(70,60)(0,20)
\SetWidth{0.5} \Vertex(0,20){5.66}
\put(-15,25){\em\huge$\beta$}
\SetWidth{1.0} \Line(70,60)(70,20)
\SetWidth{0.5} \Vertex(70,20){5.66}
\put(50,20){\em\huge$e$}
\SetWidth{1.0} \Line(70,60)(140,20)
\SetWidth{0.5} \Vertex(140,20){5.66}
\put(150,25){\em\huge$\delta$}

%from beta
\SetWidth{1.0} \Line(0,20)(-50,-20)
\SetWidth{0.5} \Vertex(-50,-20){5.66}
\put(-70,-20){\em\huge $a$}
\SetWidth{1.0} \Line(0,20)(0,-20)
\SetWidth{0.5} \Vertex(0,-20){5.66}
\put(-20,-20){\em\huge$\gamma$}
\SetWidth{1.0} \Line(0,20)(50,-20)
\SetWidth{0.5} \Vertex(50,-20){5.66}
\put(50,-38){\em\huge$d$}

% from gamma
\SetWidth{1.0} \Line(0,-20)(-30,-50)
\SetWidth{0.5} \Vertex(-30,-50){5.66}
\put(-45,-68){\em\huge$b$}
\SetWidth{1.0} \Line(0,-20)(30,-50)
\SetWidth{0.5} \Vertex(30,-50){5.66}
\put(25,-68){\em\huge$c$}

% from delta
\SetWidth{1.0} \Line(140,20)(100,-10)
\SetWidth{0.5} \Vertex(100,-10){5.66}
\put(80,-20){\em\huge$f$}
\SetWidth{1.0} \Line(140,20)(140,-20)
\SetWidth{0.5} \Vertex(140,-20){5.66}
\put(150,-30){\em\huge$\sigma$}
\SetWidth{1.0} \Line(140,-20)(140,-60)
\SetWidth{0.5} \Vertex(140,-60){5.66}
\put(150,-70){\em\huge$g$}
\SetWidth{1.0} \Line(140,20)(180,-10)
\SetWidth{0.5} \Vertex(180,-10){5.66}
\put(190,-30){\em\huge$\tau$}
\SetWidth{1.0} \Line(180,-10)(180,-60)
\SetWidth{0.5} \Vertex(180,-60){5.66}
\put(190,-70){\em\huge$h$}

\end{picture}}}}

\def\bigdectl{{\scalebox{0.4}{ %%%%%%%%%%%%%%%%%%%%%%%%%%%%%%%%%\bigdectl
%%%%%%%%%%% leaf decorated big tree
\begin{picture}(140,120)(0,-60)
\SetColor{Black}
%from alpha
\SetWidth{0.5} \Vertex(70,60){5.66}
%\put(48,60){\em\huge$\alpha$}
\SetWidth{1.0} \Line(70,60)(0,20)
\SetWidth{0.5} \Vertex(0,20){5.66}
%\put(-15,25){\em\huge$\beta$}
\SetWidth{1.0} \Line(70,60)(70,20)
\SetWidth{0.5} \Vertex(70,20){5.66}
\put(50,20){\em\huge$e$}
\SetWidth{1.0} \Line(70,60)(140,20)
\SetWidth{0.5} \Vertex(140,20){5.66}
%\put(150,25){\em\huge$\delta$}

%from beta
\SetWidth{1.0} \Line(0,20)(-50,-20)
\SetWidth{0.5} \Vertex(-50,-20){5.66}
\put(-70,-20){\em\huge $a$}
\SetWidth{1.0} \Line(0,20)(0,-20)
\SetWidth{0.5} \Vertex(0,-20){5.66}
%\put(-20,-20){\em\huge$\gamma$}
\SetWidth{1.0} \Line(0,20)(50,-20)
\SetWidth{0.5} \Vertex(50,-20){5.66}
\put(50,-38){\em\huge$d$}

% from gamma
\SetWidth{1.0} \Line(0,-20)(-30,-50)
\SetWidth{0.5} \Vertex(-30,-50){5.66}
\put(-45,-68){\em\huge$b$}
\SetWidth{1.0} \Line(0,-20)(30,-50)
\SetWidth{0.5} \Vertex(30,-50){5.66}
\put(25,-68){\em\huge$c$}

% from delta
\SetWidth{1.0} \Line(140,20)(100,-10)
\SetWidth{0.5} \Vertex(100,-10){5.66}
\put(80,-20){\em\huge$f$}
\SetWidth{1.0} \Line(140,20)(140,-20)
\SetWidth{0.5} \Vertex(140,-20){5.66}
%\put(150,-30){\em\huge$\sigma$}
\SetWidth{1.0} \Line(140,-20)(140,-60)
\SetWidth{0.5} \Vertex(140,-60){5.66}
\put(150,-70){\em\huge$g$}
\SetWidth{1.0} \Line(140,20)(180,-10)
\SetWidth{0.5} \Vertex(180,-10){5.66}
%\put(190,-30){\em\huge$\tau$}
\SetWidth{1.0} \Line(180,-10)(180,-60)
\SetWidth{0.5} \Vertex(180,-60){5.66}
\put(190,-70){\em\huge$h$}
\end{picture}}}}

\def\bigdectls{{\scalebox{0.4}{ %%%%%%%%%%%%%%%%%%%%%%%%%%%%%%%%%\bigdectls
%%%%%% leaf decorated leaf spaced
\begin{picture}(180,120)(0,-60)
\SetColor{Black}
%from alpha
\SetWidth{0.5} \Vertex(70,60){5.66}
%\put(48,60){\em\huge$\alpha$}
\SetWidth{1.0} \Line(70,60)(0,20)
\SetWidth{0.5} \Vertex(0,20){5.66}
%\put(-15,25){\em\huge$\beta$}
\SetWidth{1.0} \Line(70,60)(70,20)
\SetWidth{0.5} \Vertex(70,20){5.66}
\put(50,20){\em\huge$e$}
\SetWidth{1.0} \Line(70,60)(140,20)
\SetWidth{0.5} \Vertex(140,20){5.66}
%\put(150,25){\em\huge$\delta$}

%from beta
\SetWidth{1.0} \Line(0,20)(-50,-20)
\SetWidth{0.5} \Vertex(-50,-20){5.66}
\put(-70,-20){\em\huge $a$}
\SetWidth{1.0} \Line(0,20)(0,-20)
\SetWidth{0.5} \Vertex(0,-20){5.66}
%\put(-20,-20){\em\huge$\gamma$}
\SetWidth{1.0} \Line(0,20)(50,-20)
\SetWidth{0.5} \Vertex(50,-20){5.66}
\put(50,-38){\em\huge$d$}

% from gamma
\SetWidth{1.0} \Line(0,-20)(-30,-50)
\SetWidth{0.5} \Vertex(-30,-50){5.66}
\put(-45,-68){\em\huge$b$}
\SetWidth{1.0} \Line(0,-20)(30,-50)
\SetWidth{0.5} \Vertex(30,-50){5.66}
\put(25,-68){\em\huge$c$}

% from delta
\SetWidth{1.0} \Line(140,20)(100,-10)
\SetWidth{0.5} \Vertex(100,-10){5.66}
\SetWidth{1.0} \Line(100,-10)(100,-60)
\SetWidth{0.5} \Vertex(100,-60){5.66}
\put(150,-40){\em\huge$g$}
\SetWidth{1.0} \Line(140,20)(140,-20)
\SetWidth{0.5} \Vertex(140,-20){5.66}
%\put(150,-30){\em\huge$\sigma$}
\put(80,-70){\em\huge$f$}
\SetWidth{1.0} \Line(140,20)(180,-10)
\SetWidth{0.5} \Vertex(180,-10){5.66}
%\put(190,-30){\em\huge$\tau$}
\SetWidth{1.0} \Line(180,-10)(180,-60)
\SetWidth{0.5} \Vertex(180,-60){5.66}
\put(190,-70){\em\huge$h$}
\end{picture}}}}

\def\bigdecta{{\scalebox{0.4}{ %%%%%%%%%%%%%%%%%%%%%%%%%%%%%%%%%\bigdectls
%%%%%% angularly decorated
\begin{picture}(180,120)(-40,-60)
\SetColor{Black}
%from alpha
\SetWidth{0.5} \Vertex(70,60){5.66}
%\put(48,60){\em\huge$\alpha$}
\SetWidth{1.0} \Line(70,60)(0,20)
\SetWidth{0.5} \Vertex(0,20){5.66}
%\put(-15,25){\em\huge$\beta$}
%\SetWidth{1.0} \Line(70,60)(70,20)
%\SetWidth{0.5} \Vertex(70,20){5.66}
\put(60,30){\em\huge$e$}
\SetWidth{1.0} \Line(70,60)(140,20)
\SetWidth{0.5} \Vertex(140,20){5.66}
%\put(150,25){\em\huge$\delta$}

%from beta
\SetWidth{1.0} \Line(0,20)(-50,-20)
\SetWidth{0.5} \Vertex(-50,-20){5.66}
\put(-20,-10){\em\huge $a$}
\SetWidth{1.0} \Line(0,20)(0,-20)
\SetWidth{0.5} \Vertex(0,-20){5.66}
%\put(-20,-20){\em\huge$\gamma$}
\SetWidth{1.0} \Line(0,20)(50,-20)
\SetWidth{0.5} \Vertex(50,-20){5.66}
\put(10,-20){\em\huge$d$}

% from gamma
\SetWidth{1.0} \Line(0,-20)(-30,-50)
\Line(0,-20)(0,-50)
\SetWidth{0.5} \Vertex(-30,-50){5.66}
\Vertex(0,-50){5.66}
\put(-22,-68){\em\huge$b$}
\SetWidth{1.0} \Line(0,-20)(30,-50)
\SetWidth{0.5} \Vertex(30,-50){5.66}
\put(8,-68){\em\huge$c$}

% from delta
\SetWidth{1.0} \Line(140,20)(100,-20)
\SetWidth{0.5} \Vertex(100,-20){5.66}
\SetWidth{1.0} \Line(100,-20)(70,-50)
\Line(100,-20)(130,-50)
\SetWidth{0.5} \Vertex(70,-50){5.66}
\Vertex(130,-50){5.66}
\put(90,-60){\em\huge$f$}
%\SetWidth{1.0} \Line(140,20)(140,-20)
%\SetWidth{0.5} \Vertex(140,-20){5.66}
%\put(150,-30){\em\huge$\sigma$}
\put(140,-10){\em\huge$g$}
\SetWidth{1.0} \Line(140,20)(180,-10)
\SetWidth{0.5} \Vertex(180,-10){5.66}
%\put(190,-30){\em\huge$\tau$}
\SetWidth{1.0} \Line(180,-10)(150,-50)
\Line(180,-10)(210,-50)
\SetWidth{0.5} \Vertex(150,-50){5.66}
\Vertex(210,-50){5.66}
\put(175,-60){\em\huge$h$}
\end{picture}}}}

\def\motzstand{{\scalebox{0.35}{ %%%%%%%%%%%%%%%%%%%%%%%%%%%%%%%%%\motzstand
\begin{picture}(300,62)(-20,-68)
\SetColor{Black}
\SetWidth{0.5} \Vertex(-50,-63){5.66}
\SetWidth{1.0} \Line(-50,-63)(-20,-63)
\Vertex(-20,-63){5.66}
\put(-40,-55){\huge $x_1$}
\SetWidth{1.0} \Line(-20,-63)(10,-33)
\SetWidth{0.5} \Vertex(10,-33){5.66}
\SetWidth{1.0} \Line(10,-33)(40,-33)
\put(15,-22){\huge $x_2$}
\SetWidth{0.5} \Vertex(40,-33){5.66}
\SetWidth{1.0} \Line(40,-33)(70,-63)
\SetWidth{0.5} \Vertex(70,-63){5.66}
\SetWidth{1.0} \Line(70,-63)(100,-63)
\put(75,-52){\huge $x_3$}
\SetWidth{0.5} \Vertex(100,-63){5.66}
\SetWidth{1.0} \Line(100,-63)(130,-33)
\SetWidth{0.5} \Vertex(130,-33){5.66}
\SetWidth{1.0} \Line(130,-33)(160,-33)
\put(135,-22){\huge $x_4$}
\SetWidth{0.5} \Vertex(160,-33){5.66}
\SetWidth{1.0} \Line(160,-33)(190,-63)
\SetWidth{0.5} \Vertex(190,-63){5.66}
\SetWidth{1.0} \Line(190,-63)(220,-63)
\put(195,-52){\huge $x_5$}
\SetWidth{0.5} \Vertex(220,-63){5.66}
\SetWidth{1.0} \Line(220,-63)(250,-63)
\put(225,-52){\huge $x_6$}
\SetWidth{0.5} \Vertex(250,-63){5.66}
\end{picture}}}}

\def\rhsmotzstand{{\scalebox{0.35}{ %%%%%%%%%%%%%%%%%%%%%%%%%%%%%%%%%\motzstand
\begin{picture}(700,62)(-120,-68)
\SetColor{Black}
\SetWidth{0.5} \Vertex(-100,-63){5.66}
\put(-75,-60){\huge $\circ$}
\SetWidth{0.5} \Vertex(-50,-63){5.66}
\SetWidth{1.0} \Line(-50,-63)(-20,-63)
\put(-40,-55){\huge $x_1$}
\Vertex(-20,-63){5.66}
\put(5,-60){\huge $\circ$}
\SetWidth{0.5} \Vertex(30,-63){5.66}

\SetWidth{1.0} \Line(30,-63)(60,-33)
\SetWidth{0.5} \Vertex(60,-33){5.66}
\SetWidth{1.0} \Line(60,-33)(90,-33)
\put(65,-22){\huge $x_2$}
\SetWidth{0.5} \Vertex(90,-33){5.66}
\SetWidth{1.0} \Line(90,-33)(120,-63)
\SetWidth{0.5} \Vertex(120,-63){5.66}
\put(135,-60){\huge $\circ$}

\SetWidth{0.5} \Vertex(170,-63){5.66}
\SetWidth{1.0} \Line(170,-63)(200,-63)
\put(175,-52){\huge $x_3$}
\SetWidth{0.5} \Vertex(200,-63){5.66}
\put(225,-60){\huge $\circ$}

\SetWidth{0.5} \Vertex(250,-63){5.66}
\SetWidth{1.0} \Line(250,-63)(280,-33)
\SetWidth{0.5} \Vertex(280,-33){5.66}
\SetWidth{1.0} \Line(280,-33)(310,-33)
\put(290,-22){\huge $x_4$}
\SetWidth{0.5} \Vertex(310,-33){5.66}
\SetWidth{1.0} \Line(310,-33)(340,-63)
\SetWidth{0.5} \Vertex(340,-63){5.66}
\put(365,-60){\huge $\circ$}

\SetWidth{0.5} \Vertex(390,-63){5.66}
\SetWidth{1.0} \Line(390,-63)(420,-63)
\put(395,-52){\huge $x_5$}
\SetWidth{0.5} \Vertex(420,-63){5.66}
\put(445,-60){\huge $\circ$}

\SetWidth{0.5} \Vertex(470,-63){5.66}
\put(495,-60){\huge $\circ$}

\SetWidth{0.5} \Vertex(520,-63){5.66}
\SetWidth{1.0} \Line(520,-63)(550,-63)
\put(525,-52){\huge $x_6$}
\SetWidth{0.5} \Vertex(550,-63){5.66}
\put(575,-60){\huge $\circ$}

\SetWidth{0.5} \Vertex(600,-63){5.66}
\end{picture}}}}

%%%%%%%%%%%%%%%%%%%%%%%%%%%%%%%%%%%%%%%%%%%%%%%%%%%

%%%%%%%%%%%%%%%%%%%%% leave trees
%%%%%%%%%%%%%%%%%%%%%%%%%%%%%%%%%%%%%%%%%%%%%%%%%%%%%%%%%%%

\def\lxtree{\includegraphics[scale=.7]{lxtree}}
\def\xdtree{\includegraphics[scale=.7]{xdtree}}
\def\xdtreend{\includegraphics[scale=.2]{xdtreend}} % xdtree no decoration
\def\xytree{\includegraphics[scale=.7]{xytree}}
\def\xytreend{\includegraphics[scale=.2]{xytreend}} % xytree no decoration

\def\xdtreed{\includegraphics[scale=.3]{xdtreed}}
\def\xytreed{\includegraphics[scale=.3]{xytreed}}
\def\lxdtree{\includegraphics[scale=.7]{lxdtree}}
\def\dtree{\includegraphics[scale=0.5]{dtree}}
\def\xyleft{\includegraphics[scale=.7]{xyleft}}
\def\xyright{\includegraphics[scale=.7]{xyright}}
\def\yzdtree{\includegraphics[scale=.7]{yzdtree}}
\def\xyztreea{\includegraphics[scale=.7]{xyztree1}}
\def\xyztreeb{\includegraphics[scale=.7]{xyztree2}}
\def\xyztreec{\includegraphics[scale=.7]{xyztree3}}
\def\tableps{\includegraphics[scale=1.4]{tableps}}

\def\ccxtree{\includegraphics[scale=1]{ccxtree}}
\def\ccxtreedec{\includegraphics[scale=1]{ccxtreedec}}
\def\ccetree{\includegraphics[scale=1]{ccetree}}
\def\ccllcrcr{\includegraphics[scale=1]{ccllcrcr}}
\def\cclclcrcr{\includegraphics[scale=.5]{cclclcrcr}}
\def\ccllxryr{\includegraphics[scale=.5]{ccllxryr}}
\def\cclxlyrzr{\includegraphics[scale=1]{cclxlyrzr}}
\def\ccIII{\includegraphics[scale=1]{ccIII}}

%%%%%%%%%%%%%%%%%%%%%%%%%%%%%%%%%%%%%%%%%%%%%%%%%%%%%%%%%%%%%%%%%%

\title{Operated semigroups, Motzkin paths and rooted trees}
\author{Li Guo}
\address{Department of Mathematics and Computer Science,
         Rutgers University,
         Newark, NJ 07102}
\email{liguo@newark.rutgers.edu}

%\date{\today}

%\begin{document}

\begin{abstract}
Combinatorial objects such as rooted trees that carry a recursive structure have found important applications recently in both mathematics and physics. We put such structures in an algebraic framework of operated semigroups. This framework provides the concept of operated semigroups with intuitive and convenient combinatorial descriptions, and at the same time endows the familiar combinatorial objects with a precise algebraic interpretation. As an application, we obtain constructions of free Rota-Baxter algebras in terms of Motzkin paths and rooted trees.
\end{abstract}

% MSC: 16W99, 05C05, 08A50

\maketitle

\tableofcontents

\noindent
{\bf Key words: } \mapped semigroups, \mapped algebras, free algebras, bracketed words, planar rooted trees, Motzkin paths, Dyck paths, Rota-Baxter algebras.

\setcounter{section}{0}
{\ }
%\vspace{-1cm}

\section{Introduction}
\label{intro}
\subsection{Motivation}
This paper explores the relationship between two subjects that have been studied
separately until recently. One subject considers algebraic structures, such
as semigroups and associative algebras, with an operator
acting on them. Such structures include differential algebras,
difference algebras and Rota--Baxter algebras (See Example~\mref{ex:mapalg} for the definitions). Another subject studies objects, often combinatorial in nature, that have
underlying recursive structures, such as rooted trees and Motzkin paths.
Since the 1990s, algebraic structures on rooted trees have been studied in the work of Connes and Kreimer~\mcite{C-K0,C-K1} on the renormalization of quantum field theory, Grossman and Larson~\mcite{G-L} on data structures and of Loday and Ronco~\mcite{Lo1,L-R1} on operads. The grafting operator on trees plays an important role in their works.
More recently~\mcite{A-M,E-G0}, free Rota--Baxter algebras were constructed using planar rooted trees with special decorations.

In this paper, we relate these two subjects through the concepts of \mapped semigroups, \mapped monoids and \mapped algebras. We establish
that free \mapped semigroups have natural combinatorial interpretation in terms of Motzkin paths and planar rooted forests. This freeness characterization of rooted forests and Motzkin paths gives an algebraic explanation of the fundamental roles played by these combinatorial objects and their related numerical sequences such as the Catalan numbers and Motzkin numbers~\mcite{St}.
This characterization should be useful in further algebraic studies of these combinatorial objects. This connection also
endows the concept of \mapped algebras and semigroups with familiar
combinatorial contents, giving significance to these \mapped algebraic structures beyond the abstract generalization. As a consequence, we obtain several constructions of free Rota-Baxter algebras which can be adopted to free objects in the other related algebraic structures.
\medskip

\subsection{Definitions and examples}
\begin{defn}{\rm
An {\bf \mapped semigroup} (or a {\bf semigroup with an operator}) is a semigroup $U$ together with an operator
$\alpha: U\to U$. $\alpha$ is called the {\bf distinguished operator} on
$U$.
A morphism from an \mapped
semigroup $(U,\alpha)$ to an \mapped semigroup $(V,\beta)$ is a
semigroup homomorphism $f :U\to V$ such that $f \circ \alpha=
\beta \circ f,$ that is, such that the following diagram
commutes.
$$
 \xymatrix{
            U\ar[rr]^\alpha \ar[d]^f && U \ar[d]_f \\
            V\ar[rr]^\beta           && V}
$$

More generally, let $\Omega$ be a set. An {\bf $\Omega$-\mapped semigroup} is a semigroup $U$ together with a set of operators
$\alpha_\omega: U\to U,\ \omega\in \Omega$.
In other words, an $\Omega$-\mapped semigroup is a pair $(U,\alpha)$ with a semigroup $U$ and a map $\alpha: \Omega \to \Map(U,U),
\alpha(\omega) = \alpha_\omega$. Here $\Map(U,U)$ is the set of maps from $U$ to $U$.
A morphism from an $\Omega$-\mapped
semigroup $(U,\{\alpha_\omega, \omega\in \Omega\})$ to an $\Omega$-\mapped semigroup $(V,\{\beta_\omega, \omega\in \Omega\})$ is a
semigroup homomorphism $f :U\to V$ such that $f \circ \alpha_\omega=
\beta_\omega \circ f$ for $\omega \in \Omega.$
}
\mlabel{de:mapset}
\end{defn}
\begin{remark}{\rm
When a semigroup is replaced by a monoid we obtain the concept of an ($\Omega$-) {\bf \mapped monoid}. Let $\bfk$ be a commutative ring. We similarly define the concepts of an ($\Omega$-){\bf \mapped $\bfk$-algebra} or
($\Omega$-){\bf \mapped nonunitary $\bfk$-algebra}.
}
\end{remark}

\begin{exam}{\rm
Here are some examples of \mapped $\bfk$-algebras with one operator.
\begin{enumerate}
\item
A semigroup is an \mapped semigroup when the
distinguished operator is taken to be the identity;
\item A differential algebra~\mcite{Kol,S-P} is an associative algebra $A$ with
a linear operator $d:A\to A$ such that
$$ d(xy)=d(x)y+xd(y), \forall\, x,y\, \in A;$$
\item
A difference algebras~\mcite{RCo,S-P1} is an associative algebra $A$ with an algebra endomorphism on $A$;
 \item
Let $\lambda$ be fixed in the ground ring $\bfk$. A Rota-Baxter algebras~\mcite{Ba,Ca,C-K1,E-G4,G-K1,Ro1,Ro2} (of weight $\lambda$) is defined to be an associative algebra $A$ with a linear operator $P$ such that
\begin{equation}
P(x)P(y) = P(xP(y))+ P(P(x)y)+\lambda P(xy),\ \forall\,
x,y\, \in\, A;
\mlabel{eq:rba}
\end{equation}
\item A differential algebras of weight $\lambda$~\mcite{G-K3} is defined to be an algebra $A$ together with a linear operator $d: A\to A$ such that
$$ d(xy)=d(x)y+xd(y)+\lambda\, d(x)d(y), \forall\, x,y\, \in A.$$
\end{enumerate}
\mlabel{ex:mapalg}
Other examples of \mapped algebras can be found in~\mcite{Ro2}.

Here are some $\Omega$-\mapped algebras with multiple operators.
\begin{enumerate}
\item A {\bf $\Delta$-differential algebras}~\mcite{Kol} is an algebra with multiple differential operators $\delta\in \Delta$ that commute with each other;
\item A {\bf $\sigma\delta$-algebra}~\mcite{Si} is an algebra with a commuting pair of a difference operator $\sigma$ and a differential operator $\delta$;
\item A {\bf differential Rota-Baxter algebra} of weight $\lambda$~\mcite{G-K3} is an algebra with a differential operator $d$ of weight $\lambda$ and a Rota-Baxter algebra $P$ of weight $\lambda$, such that $d\circ P=\id.$ This last relation is a natural generalization of the First Fundamental Theorem of Calculus when $d$ is taken to be the usual derivation and $P$ is the integral operator $P[f](x)=\int_a^x f(t)\, dt$;
\item As an variation, we can consider an algebra with a differential operator and a Rota-Baxter operator of weight $-1$~\mcite{G-Z} that commute with each other. It arises naturally in the study of multiple zeta values by renormalization methods;
\item
A {\bf Rota-Baxter family} on an algebra $R$ is a collection of linear operators $P_\omega$ on $R$ with $\omega$ in a semigroup $\Omega$, such that
$$ P_\alpha(x)P_\beta(y)=P_{\alpha\beta}(P_\alpha(x)y) +P_{\alpha\beta}(xP_{\beta}(y))+\lambda P_{\alpha\beta}(xy), \ \forall\, x, y\in A, \alpha, \beta\in \Omega.$$
It arises naturally  in renormalization of quantum field theory~\cite[Prop. 9.1]{EGP}.
\end{enumerate}
}
\end{exam}

%\subsection{Outline of the paper}

Our main goal here is to give combinatorial constructions of free objects in the category of $\Omega$-\mapped semigroups and $\Omega$-\mapped monoids.
They naturally give free objects in the category of \mapped algebras.
These free objects are obtained as the adjoint functor of the forgetful functor from the category
of \mapped semigroups and \mapped monoids to the category of sets in the usual way. More precisely,

\begin{defn}{\rm
A {\bf free \mapped semigroup} on a set $X$ is an \mapped semigroup
$(U_X,\alpha_X)$ together with a map $j_X:X\to U_X$ with the
property that, for any \mapped semigroup $(V,\beta)$ and
any map $f:X\to V$, there is a unique morphism
$\free{f}:(U_X,\alpha_X)\to (V,\beta)$ of \mapped semigroups such
that $ f=\free{f}\circ j_X.$ In other words, the following diagram commutes.
$$
 \xymatrix{
            X \ar[rr]^{j_X} \ar[drr]_{f} && U_X \ar[d]^{\free{f}} \\
                                         && V
            }
$$
Let $\cdot$ be the binary operation on the semigroup $U_X$, we also use the quadruple $(U_X, \cdot, \alpha_X, j_X)$ to denote the free \mapped semigroup on $X$, except when $X=\emptyset$, when we drop $j_X$ and use the triple
$(U_X,\cdot, \alpha_X)$.
}
\mlabel{de:freemapped}
\end{defn}
We similarly define the concepts of free \mapped monoids, and free \mapped unitary and nonunitary $\bfk$-algebras. We also similarly define the more general concept of free $\Omega$-\mapped monoids. See Theorem~\mref{thm:motzdec} for the precise definition.

\subsection{Outline of the paper}
In Section~\mref{sec:paths}, free \mapped semigroups and free \mapped monoids are constructed in terms of Motzkin paths (Corollary~\mref{co:Motz}). In fact, we construct free $\Omega$-\mapped semigroups and free $\Omega$-\mapped monoids in terms of a natural generalization of Motzkin paths (Theorem~\mref{thm:motzdec}). Through the Motzkin words, we relate the Motzkin paths with the recursively constructed bracketed words in Section~\mref{ss:words} (Theorem~\mref{thm:motzword}).
This in turn allows us to construct free \mapped semigroups and free \mapped monoids in terms of bracketed words (Corollary~\mref{co:wordfree}).
In Section~\mref{sec:trees}, free $\Omega$-\mapped semigroups are constructed in terms of vertex decorated planar rooted forests, through a natural isomorphism from the free \mapped semigroup of peak-free Motzkin paths to the \mapped semigroup of the vertex decorated planar rooted forests (Theorem~\mref{thm:treefree}).
\smallskip

One can regard these results on the free objects as a first step in the study of \mapped semigroups and \mapped algebras that generalizes the extensive work on semigroups~\mcite{E-N,Gr,HLV,SGIF}.  But we also have in mind the more practical purpose of using these free objects to underly the algebraic structures on the combinatorial objects of planar rooted trees and Motzkin paths, and to study Rota-Baxter algebras and related structures.

Given the recent progresses on Rota-Baxter algebra in both theoretical and application aspects~\mcite{Ag3,A-M,C-K0,E-G4,E-G0,E-G-K3,E-G-M,Gu5,G-K1,
G-K3,G-S,G-Z}, it is desirable to obtain convenient constructions of free Rota-Baxter algebras. To this end,
in Section~\mref{sec:bij}, we put together bijections and inclusions among bracketed words, Motzkin path, vertex decorated forests and angular decorated forests, as well as their various subsets (Theorem~\mref{thm:diag}). These maps preserves the structure of \mapped semigroups. In Section~\mref{sec:rba}, we use these bijections and the construction of free Rota-Baxter algebra in terms of angularly decorated rooted forests~\mcite{E-G0} to construct free Rota-Baxter algebras in terms of Motzkin paths, bracketed words and leaf decorated forests (Corollary~\mref{co:motzfree} -- \mref{co:ltreefree}).
\medskip

\noindent
{\bf Notations: } We will use $\NN$ to denote the set of non-negative integers. By an algebra we mean an associative unitary algebra unless otherwise specified. For a commutative ring $\bfk$ and a set $Y$, we use $\bfk\, Y$ to denote the free $\bfk$-module with basis $Y$. When $Y$ is a monoid (resp. semigroup), $\bfk\, Y$ carries the natural $\bfk$-algebra (resp. nonunitary $\bfk$-algebra) structure. We use $\sdotcup$ to denote a disjoint union.
\medskip

\noindent
{\bf Acknowledgements: }
This work is supported in part by NSF grant DMS-0505643.

\section{Free \mapped semigroups and monoids in terms of Motzkin paths}
\mlabel{sec:paths}
We show that free \mapped semigroups and monoids have a natural construction by Motzkin paths.

Recall~\mcite{D-S,Do-S} that a {\bf Motzkin path} is  a lattice path in $\NN^2$ from $(0,0)$
to $(n,0)$ whose permitted steps are
an up diagonal step (or {\bf up step} for short) $(1,1)$, a down diagonal step (or {\bf down step}) $(1,-1)$ and a horizontal step (or {\bf level step}) $(1,0)$.
The first few Motzkin paths are
\begin{equation}
 \ma1 \;
 \mb2   \;
 \mc3   \;
 \md31  \;
 \me4   \;
 \mf41  \;
 \mg42 \;
 \mh43 \;
 \mi5 \;
 \mj51 \;
 \mlabel{eq:mpath}
\end{equation}

The height of a Motzkin path is simply the maximum of the height of the points on the path. Let $\frakP$ be the set of Motzkin paths. For Motzkin paths $\frakm$ and $\frakm'$, define $\frakm\circ \frakm$, called the {\bf link product} of $\frakm$ and $\frakm'$, to be the Motzkin path obtained by joining the last vertex of $\frakm$ with the first vertex of $\frakm'$. For example,
$$
\mb2 \circ \mb2 = \ \mc3 ,
\mb2 \circ \md31 =  \mf41,
\md31 \circ \md31 =  \mpp57
$$
The link product is obviously associative with the {\bf trivial Motzkin path} $\onetree$ as the identity.
Let $\frakI$ be the set of {\bf indecomposable} (also called {\bf prime}) Motzkin paths,
consisting of Motzkin paths that touch the $x$-axis only at the two end vertices.
It is clear that a Motzkin path is indecomposable if and only if it is not the link product of two non-trivial Motzkin paths.

Next for a Motzkin path $\frakm$, denote $\lm \frakm \rtm$
to be the Motzkin path obtained by raising $\frakm$ on the left end by an up step
and on the right end by a down step. For example,
$$ \lm \ma1\, \rtm = \md31,\ \lm \ \ \mb2 \rtm =  \mh43\ ,
\lm \md31 \rtm =  \mq58
$$
This defines an operator $\lm\ \rtm$ on $\frakP$, called the {\bf raising operator}. Thus $\frakP$, with the link product $\circ$ and the raising operator $\lm\ \rtm$, is an \mapped monoid.

Let $\frakD$ denote set of {\bf Dyck paths} which are defined to be paths that do not have any level steps.
Then $(\frakD,\circ, \lm\ \rtm)$ is an \mapped submonoid
of the \mapped monoid $(\frakP, \circ, \lm\ \rtm)$.

A Motzkin path is called {\bf peak-free} if it is not $\ma1$ and does not have an up step followed immediately by a down step. For example,
$ \mb2\; \mc3\;  \me4\; \mh43$ and $\; \mi5$
in the list (\mref{eq:mpath}) are peak-free while the rest
are not. Let $\frakL$ denote the set of peak-free Motzkin paths. Then $(\frakL, \circ, \lm\ \rtm)$ is an \mapped subsemigroup (but not submonoid) of $(\frakP,\circ, \lm\ \rtm)$.

Let $X$ be a set. An {\bf $X$-decorated (or colored) Motzkin path}~\mcite{CSY,D-S} is a Motzkin path whose level steps are decorated
(colored) by elements in $X$. Some examples are
$$
\xmb2 \quad \xymc3 \quad \xmf41 \quad \xmh43
$$
Let $\frakP(X)$ be the set of $X$-decorated Motzkin paths and let $\frakL(X)$ be the set of peak-free $X$-decorated Motzkin paths.
Note that Motzkin paths with no decorations can be identified with $X$-decorated Motzkin paths where $X$ is a singleton.

We next generalize the concept of Motzkin paths to allow decorations on the up and down steps.
A {\bf matching pair} of steps in a Motzkin path consists of an up step and the first down step to the right of this up step with the same height. To put it another way, a matching pair of steps is an up step and a down step to its right such that the path between (and excluding) these two steps is a Motzkin path.

Let $\Omega$ be a set. By an {\bf $(X,\Omega)$-decorated} or a {\bf fully decorated} Motzkin path we mean a Motzkin path where each matching pair of steps is decorated by an element of $\Omega$, and where each level step is decorated by an element of $X$. For example,
$$
\bigdecm
\vspace{0.2cm}
$$
is an $(X,\Omega)$-decorated Motzkin path
with $a,b,c,d,e,f,g,h\in X$ and $\alpha,\beta,\gamma,\delta,\sigma,\tau\in \Omega$.

The set of $(X,\Omega)$-decorated Motzkin paths is denoted by $\frakP(X,\Omega)$. We similarly define {\bf $(X,\Omega)$-decorated peak-free Motzkin paths} $\frakL(X,\Omega)$ and {\bf $\Omega$-decorated Dyck paths} $\frakD(\Omega)$. The last notation makes sense since a Dyck path does not have any level steps and thus does not involve decorations by $X$.

The link product of two $(X,\Omega)$-decorated Motzkin paths is defined in the same way as for Motzkin paths. Further, for each $\omega\in \Omega$ and an $(X,\Omega)$-decorated Motzkin path $\frakm$, we define $ \lm_\omega\; \frakm\; \rtm_\omega$ to be the Motzkin path obtained by raising $\frakm$ on the left end by an up step on the right end by a down step, both decorated by $\omega$. For example, we have
$$
\lm_\omega \ma1 \rtm_\omega= \amd31,\
\lm_\omega \xmb2 \rtm_\omega= \axmh43, \quad
\lm_\omega \bmd31 \rtm_\omega= \abmq58
$$
Thus for each $\omega\in \Omega$, we obtain a map $\lm_\omega \ \rtm_\omega$ on each of the semigroups or monoids $\frakP(X,\Omega)$, $\frakL(X,\Omega)$ and $\frakD(\Omega)$, making it into an $\Omega$-\mapped semigroup or an $\Omega$-\mapped monoid.

The concepts of height and indecomposability in $\frakP(X,\Omega)$ are defined in the same way as in $\frakP$.
For $n\geq 0$, let $\frakP_n(X,\Omega)$ be the submonoid of elements of $\frakP(X,\Omega)$ of height $\leq n$. Also define $\frakP_{-1}(X,\Omega)=\emptyset$.
Define
$\frakL_n(X,\Omega)=\frakP_n(X,\Omega)\cap \frakL(X,\Omega)$ and
$\frakD_n(\Omega)=\frakP_n(X,\Omega)\cap \frakD(\Omega)$.

\begin{theorem} Let $\Omega$ and $X$ be non-empty sets.
Let $j_X: X\to \frakL(X,\Omega) \subseteq \frakP(X,\Omega)$ be defined by $j_X(x)=\xmb2,\, x\in X$.
\begin{enumerate}
\item
The quadruple $\big(\frakP(X,\Omega),\circ, \{\lm_\omega\ \rtm_\omega\;\big|\; \omega\in \Omega\}, j_X\big)$ is the free $\Omega$-\mapped monoid on $X$. More precisely,
for any $\Omega$-\mapped monoid $(H,\{\alpha_\omega\,\big| \omega\in \Omega\})$ consisting of a monoid $H$ and maps $\alpha_\omega: H \to H$ for $ \omega\in \Omega$, there is a unique morphism
$\free{f}:(\frakP(X,\Omega), \{\lm_\omega\ \rtm_\omega\, \big|\, \omega\in \Omega\})\to (H, \{\alpha_\omega\,\big|\, \omega\in \Omega\})$ of \mapped monoids such
that $ f=\free{f}\circ j_X.$
\mlabel{it:motmondec}
\item
The quadruple $(\frakL(X,\Omega),\circ, \{\lm_\omega\ \rtm_\omega\;\big| \; \omega\in \Omega\}, j_X)$ is the free $\Omega$-\mapped semigroup on $X$.
\mlabel{it:motsgdec}
\item
The triple $(\frakD(\Omega),\circ, \{\lm_\omega\ \rtm_\omega\;\big|\; \omega\in \Omega\})$ is the free $\Omega$-\mapped monoid on the empty set.
\mlabel{it:motmon0dec}
\end{enumerate}
\mlabel{thm:motzdec}
\end{theorem}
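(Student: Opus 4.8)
\textit{Proof strategy.} The plan is to deduce all three parts from one template: reduce each universal property to the recursive unique-factorization structure of decorated Motzkin paths, and then build the induced morphism and verify its uniqueness by induction on the number of steps of a path.

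The structural input is as follows. Every non-trivial path in $\frakP(X,\Omega)$ is uniquely a link product $\frakp_1\circ\cdots\circ\frakp_k$ of indecomposable non-trivial paths --- the $\frakp_i$ being the segments cut off at the intermediate vertices where the path returns to the $x$-axis --- so $\frakP(X,\Omega)$ is the free monoid, and the subset of non-trivial paths the free semigroup, on the set of indecomposable paths. Since each segment of such a factorization of a peak-free path is again peak-free and non-trivial, while link products of peak-free paths are peak-free, $\frakL(X,\Omega)$ is the free semigroup on the indecomposable peak-free paths; likewise $\frakD(\Omega)$ is the free monoid on the indecomposable Dyck paths. Next one classifies the indecomposables: an indecomposable path of height $0$ consists of a single level step, hence equals $j_X(x)$ for a unique $x\in X$; an indecomposable path of height $\ge 1$ can begin neither with a level step nor end with one, so it begins with an up step and ends with a down step, these two steps form its outermost matching pair and carry a unique label $\omega\in\Omega$, and deleting them leaves a uniquely determined path of height one less. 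Thus ``delete the outermost matching pair'' identifies the indecomposable paths with $X\sqcup(\Omega\times\frakP(X,\Omega))$; in the peak-free case with $X\sqcup(\Omega\times\frakL(X,\Omega))$, since $\lm_\omega\frakm\rtm_\omega$ is peak-free exactly when $\frakm$ is peak-free and non-trivial (which is also precisely why the raising operators restrict to $\frakL(X,\Omega)$, never being applied to the excluded trivial path); and in the Dyck case with $\Omega\times\frakD(\Omega)$, there being no level steps and hence no $X$-component.

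Granting this, for part (a) one is given an $\Omega$-\mapped monoid $(H,\{\alpha_\omega\mid\omega\in\Omega\})$ and a map $f:X\to H$, and defines $\free{f}:\frakP(X,\Omega)\to H$ by recursion on the number of steps, one clause per case above: $\free{f}(\onetree)=1_H$; $\free{f}(j_X(x))=f(x)$; $\free{f}(\lm_\omega\frakm\rtm_\omega)=\alpha_\omega(\free{f}(\frakm))$; and $\free{f}(\frakp_1\circ\cdots\circ\frakp_k)=\free{f}(\frakp_1)\cdots\free{f}(\frakp_k)$ for $k\ge 2$ with the $\frakp_i$ indecomposable. In each clause the inner data are uniquely determined and strictly shorter, so $\free{f}$ is well-defined; it is a monoid homomorphism because the indecomposable factorization of a link product $\frakm\circ\frakm'$ is the concatenation of those of $\frakm$ and $\frakm'$ (together with $\free{f}(\onetree)=1_H$); it commutes with every $\alpha_\omega$ because $\lm_\omega\frakm\rtm_\omega$ is always indecomposable of the third type, so the third clause reads $\free{f}(\lm_\omega\frakm\rtm_\omega)=\alpha_\omega(\free{f}(\frakm))$ for every $\frakm$; and $\free{f}\circ j_X=f$ by the second clause. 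Conversely, any morphism $g$ with $g\circ j_X=f$ must satisfy $g(\onetree)=1_H$, $g(\frakp_1\circ\cdots\circ\frakp_k)=g(\frakp_1)\cdots g(\frakp_k)$, $g(\lm_\omega\frakm\rtm_\omega)=\alpha_\omega(g(\frakm))$ and $g(j_X(x))=f(x)$, so an induction on the number of steps forces $g=\free{f}$. Part (b) is the same verbatim with ``monoid'' replaced by ``semigroup'' and the $\onetree$-clause deleted (legitimate since $\onetree\notin\frakL(X,\Omega)$); part (c) keeps the $\onetree$-clause, drops the $j_X(x)$-clause, and has $\onetree\mapsto 1_H$ as its base case, the universal property now being over $\emptyset$.

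The only substantive step is the structural input above --- concretely, that an indecomposable decorated path of positive height arises by raising a \emph{uniquely} determined shorter path with a \emph{uniquely} determined label $\omega$. This rests on checking that such a path must open with an up step and close with a down step, and that these two steps form a matching pair, the interior being a genuine Motzkin path. Once that bijection and its peak-free and Dyck variants are established, everything else is a routine structural induction; the one point to keep in mind throughout is the status of the trivial path --- present in $\frakP(X,\Omega)$ and in $\frakD(\Omega)$, deliberately kept out of $\frakL(X,\Omega)$ so that the raising operators do not leave the peak-free paths.
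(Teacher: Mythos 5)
Your proof is correct and follows essentially the same route as the paper's: both rest on the unique factorization of a path into indecomposables (so that $\frakP(X,\Omega)$ is the free monoid on the indecomposable paths) together with the observation that an indecomposable of positive height is $\lm_\omega\,\ofrakm\,\rtm_\omega$ for a uniquely determined $\omega$ and shorter path $\ofrakm$, and then define and uniquify $\free{f}$ by structural recursion. The only cosmetic difference is the induction parameter — you recurse on the number of steps, while the paper filters by height and takes a direct limit of the homomorphisms $\free{f}_n$ on $\frakP_n(X,\Omega)$ — which does not change the substance of the argument.
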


\begin{proof}
We only need to prove (\mref{it:motmondec}). The proof of the other parts are similar.

Let $(H,\{\alpha_\omega\,\big|\, \omega\in \Omega\})$ be an $\Omega$-\mapped monoid with a monoid $H$ and maps $\alpha_\omega: H\to H, \omega\in \Omega$. Let $f:X\to H$ be a set map. We will
use induction on $n$ to construct a unique sequence of monoid homomorphisms
\begin{equation}
\free{f}_n: \frakP_n(X,\Omega) \to H, n \geq 0,
\mlabel{eq:motzind}
\end{equation}
with the following properties.
\begin{enumerate}
\item
$\free{f}_n \big|_{\frakP_{n-1}(X,\Omega)} = \free{f}_{n-1}.$

\mlabel{it:rest}
\item
$\free{f}_n \circ (\lm_\omega\ \rtm_\omega)  = \alpha_{\omega} \circ \free{f}_{n-1}$ on $\frakP_{n-1}(X,\Omega)$ for each $\omega\in \Omega$.
\mlabel{it:ocomm}
\end{enumerate}
In other words, the following diagrams commute.
$$\xymatrix{\frakP_{n-1}(X,\Omega) \ar@{_{(}->}[d] \ar^{\free{f}_{n-1}}[rr]& & H \\
\frakP_n(X,\Omega) \ar_{\free{f}_n}[urr]
}\qquad
\xymatrix{\frakP_{n-1}(X,\Omega) \ar^{\free{f}_{n-1}}[rr]
    \ar_{\lm_\omega\ \rtm_\omega}[d] && H \ar[d]^{\alpha_\omega} \\
    \frakP_n(X,\Omega) \ar^{\free{f}_n}[rr] && H
}
$$

When $n=0$, $\frakP_0(X,\Omega)$ is the monoid of paths from $(0,0)$ to $(m,0)$, $m\geq 0$, consisting of only level steps which are decorated by elements of $X$. Thus $\frakP_0(X,\Omega)$ is the free monoid generated by $\{ \xmb2\big | x\in X\}$.
Then the map $f: X\to H$ extends uniquely to a monoid homomorphism $\free{f}_0: \frakP_0(X,\Omega) \to H$ such
that $\free{f}_0 \circ j_X = f$. $\free{f}_0$ trivially satisfies properties (\mref{it:rest}) and (\mref{it:ocomm}) since $\frakP_{-1}(X,\Omega)=\emptyset$ by convention.

For given $k\geq 0$, assume that there is a unique map $\free{f}_k: \frakP_k(X,\Omega) \to H$ satisfying the properties (\mref{it:rest}) and (\mref{it:ocomm}).
Note that $\frakP_{k+1}(X,\Omega)$ is the free monoid generated by $\frakI_{k+1}(X,\Omega)$, the set of indecomposable Motzkin paths of height $\leq k+1$, and note that an indecomposable Motzkin path of height $k+1$ is of the form $\lm_\omega\,\ofrakm \rtm_\omega$ for an $\omega\in \Omega$ and an $\ofrakm\in \frakP_k(X,\Omega)$ of height $k$. This is because an $\frakm\in \frakI_{k+1}(X,\Omega)$ touches the $x$-axis only at the beginning and the end of the path. So the first step must be a rise step and the last step must be a fall step, decorated by the same $\omega\in \Omega$. Further, if the first step and the last step are removed, we still have a Motzkin path $\ofrakm$ of height $k$ and $\frakm=\lm_\omega\,\ofrakm\rtm_\omega$.

Thus we have the disjoint union
$$
\frakI_{k+1}(X,\Omega)=\frakI_k(X,\Omega) \dotcup
\Big(\dotcup_{\omega\in \Omega}\ \lm_\omega\, \big(\frakL_k(X,\Omega)\backslash \frakL_{k-1}(X,\Omega)\big)\rtm_\omega\Big). $$
Define
$$ f_{k+1}: \frakI_{k+1}(X,\Omega) \to H$$
by requiring
$$
f_{k+1}(\frakm)= \left \{\begin{array}{ll}
    \free{f}_k(\frakm), & \frakm\in \frakI_k(X,\Omega),\\
    \alpha_\omega(\free{f}_k(\ofrakm)), &
        \frakm=\lm_\omega\ofrakm\rtm_\omega \in
    \lm_\omega (\frakL_k(X,\Omega)\backslash \frakL_{k-1}(X,\Omega))\rtm_\omega,\ \omega\in \Omega.
    \end{array} \right .
$$
Then extend $f_{k+1}$ to the free monoid $\frakP_{k+1}(X,\Omega)$ on $\frakI_{k+1}(X,\Omega)$ by multiplicity and obtain
$$ \free{f}_{k+1}: \frakP_{k+1}(X,\Omega) \to H.$$
By the construction of $f_{k+1}$, $\free{f}_{k+1}$ satisfies properties (\mref{it:rest}) and (\mref{it:ocomm}), and it is the unique such monoid homomorphism.

By Property (\mref{it:rest}), the sequence $\{\free{f}_n, n\geq 0\}$, forms a direct system of monoid homomorphisms and thus gives the direct limit
\begin{equation}
 \free{f}= \dirlim \free{f}_n: \frakP(X,\Omega) \to H
\mlabel{eq:mdir}
\end{equation}
which is naturally a monoid homomorphism.
By Property (\mref{it:ocomm}), we further have
$\free{f}\circ (\lm_\omega\ \rtm_\omega) = \alpha_{\omega}\circ \free{f},\
    \forall\, \omega\in \Omega.$
Thus $\free{f}$ is a homomorphism of $\Omega$-\mapped monoids such that $\free{f}\circ j_X= f$.

Furthermore, if $\free{f}': \frakP(X,\Omega)\to H$ is another homomorphism of $\Omega$-\mapped monoids such that $\free{f}' \circ j_X=f$.
Let $\free{f}'_n= \free{f}'\big|_{\frakP_n(X,\Omega)}$.
Then we have
$$ \free{f}' \circ j_X=f =\free{f}\circ j_X$$
and hence $\free{f}'_0 = \free{f}_0$ since $\frakP_0(X,\Omega)$ is the free monoid generated by $j_X(X)$. Further, $\{\free{f}'_n,\ n\geq 0\}$ also satisfies Property (\mref{it:rest}) by its construction, and satisfies Property (\mref{it:ocomm}) since $\free{f}'$ is a homomorphism of $\Omega$-\mapped monoids. But by our inductive construction of $\{\free{f}_n,\ n\geq 0\}$, such $\free{f}_n$ are unique. Thus we have $\free{f}'_n=\free{f}_n,\ n\geq 0$, and therefore
$\free{f}=\free{f}'$. This proves the uniqueness of $\free{f}$.
\end{proof}

By taking $\Omega$ to be a singleton in Theorem~\mref{thm:motzdec}, we have:
\begin{coro}
\begin{enumerate}
\item
Let $X$ be a non-empty set. The quadruple $(\frakP(X),\circ, \lm\ \rtm, j_X)$ is the free \mapped monoid on $X$.
In particular, $(\frakP, \circ, \lm\ \rtm)$ is the free \mapped monoid on one generator.
\mlabel{it:motmon}
\item
Let $X$ be a non-empty set. The quadruple $(\frakL(X),\circ, \lm\ \rtm, j_X)$ is the free \mapped semigroup on $X$.
In particular, $(\frakL, \circ, \lm\ \rtm)$ is the free \mapped semigroup on one generator.
\mlabel{it:motsg}
\item
The triple $(\frakD,\circ, \lm\ \rtm)$ is the free \mapped monoid on the empty set.
\mlabel{it:motmon0}
\end{enumerate}
\mlabel{co:Motz}
\end{coro}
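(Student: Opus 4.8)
The plan is to obtain all three parts directly from Theorem~\mref{thm:motzdec} by specializing $\Omega$ to a one-element set $\{\omega\}$. First I would note that when $|\Omega|=1$, the data of a family of operators $\{\alpha_\omega\,|\,\omega\in\Omega\}$ on a monoid (resp. semigroup) $H$ is precisely the data of a single distinguished operator, so that an $\Omega$-\mapped monoid is the same thing as an \mapped monoid in the sense of Definition~\mref{de:mapset}, and a morphism of $\Omega$-\mapped monoids is exactly a morphism of \mapped monoids. On the combinatorial side, each matching pair of steps in a Motzkin path then admits a unique possible $\Omega$-decoration, so the natural identifications $\frakP(X,\Omega)=\frakP(X)$, $\frakL(X,\Omega)=\frakL(X)$, and (since a Dyck path has no level steps at all) $\frakD(\Omega)=\frakD$ hold. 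Under these identifications the link product is unchanged, the operator $\lm_\omega\ \rtm_\omega$ becomes the raising operator $\lm\ \rtm$, and the map $j_X$ is unchanged; hence the identifications are isomorphisms of \mapped monoids, resp. semigroups.

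Granting this, the main assertion of each of parts~(\mref{it:motmon}), (\mref{it:motsg}) and (\mref{it:motmon0}) of the corollary is exactly what parts~(\mref{it:motmondec}), (\mref{it:motsgdec}) and (\mref{it:motmon0dec}) of Theorem~\mref{thm:motzdec} become after this specialization, so nothing further is needed for them. For the ``in particular'' statements in (\mref{it:motmon}) and (\mref{it:motsg}) I would additionally take $X$ to be a one-element set $\{x\}$: by the remark preceding the theorem, an $X$-decorated (peak-free) Motzkin path over a singleton alphabet is the same as an undecorated (peak-free) Motzkin path, so $\frakP(X)=\frakP$ and $\frakL(X)=\frakL$, and the insertion $j_X$ simply picks out the single level step $\mb2$. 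Thus $(\frakP,\circ,\lm\ \rtm)$ is the free \mapped monoid on the one generator $\mb2$, and $(\frakL,\circ,\lm\ \rtm)$ is the free \mapped semigroup on the one generator $\mb2$.

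There is essentially no obstacle here: the entire content is carried by Theorem~\mref{thm:motzdec}, and the only point requiring verification---that the singleton specialization is compatible with every structure map, namely the link product, the raising operator, and the insertion $j_X$---is immediate from the definitions, since a forced decoration adds no information. I would simply record these compatibilities and invoke the theorem.
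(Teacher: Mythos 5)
Your proposal is correct and follows exactly the paper's route: the paper derives Corollary~\ref{co:Motz} in one line by taking $\Omega$ to be a singleton in Theorem~\ref{thm:motzdec}, which is precisely your specialization argument (you merely spell out the routine identifications $\frakP(X,\Omega)=\frakP(X)$, etc., and the further singleton choice of $X$ for the ``in particular'' clauses, which the paper leaves implicit).
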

%Note that the free \mapped semigroup on the empty set is
%the empty set.
%
Recall that for a semigroup (resp. monoid) $Y$, we use $\bfk\, Y$ to denote the corresponding nonunitary (resp. unitary) $\bfk$-algebra. We then have
\begin{coro}
Let $\Omega$ and $X$ be non-empty sets. Let $j_X$ be as defined in Theorem~\mref{thm:motzdec}.
\begin{enumerate}
\item
The quadruple $(\bfk\frakP(X,\Omega),\circ, \{\lm_\omega\ \rtm_\omega\;\big|\; \omega\in \Omega\}, j_X)$ is the free $\Omega$-\mapped $\bfk$-algebra on $X$.
\mlabel{it:motalgdec}
\item
The quadruple $(\bfk \frakL(X,\Omega),\circ, \{\lm_\omega\ \rtm_\omega\;\big| \; \omega\in \Omega\}, j_X)$ is the free $\Omega$-\mapped nonunitary $\bfk$-algebra on $X$.
\mlabel{it:motrngdec}
\item
The quadruple $(\frakD(\Omega),\circ, \{\lm_\omega\ \rtm_\omega\;\big|\; \omega\in \Omega\}, j_X)$ is the free $\Omega$-\mapped $\bfk$-algebra on the empty set.
\mlabel{it:motalg0dec}
\end{enumerate}
\mlabel{co:motzdec}
\end{coro}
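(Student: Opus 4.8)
The plan is to reduce Corollary~\mref{co:motzdec} to Theorem~\mref{thm:motzdec} via the standard monoid-algebra (resp.\ semigroup-algebra) adjunction, enhanced to keep track of the operators. The key observation, which I would isolate as a lemma, is: \emph{if $(M,\cdot,\{\alpha_\omega\,|\,\omega\in\Omega\})$ is an $\Omega$-\mapped monoid, then the monoid algebra $\bfk M$, equipped with the $\bfk$-linear operators $P_\omega:\bfk M\to\bfk M$ obtained by linearly extending $\alpha_\omega$, is the free $\Omega$-\mapped $\bfk$-algebra on the $\Omega$-\mapped monoid $M$.} In other words, for every $\Omega$-\mapped $\bfk$-algebra $(A,\{Q_\omega\})$, restriction along the canonical map $M\hookrightarrow\bfk M$ is a bijection
\[
\Hom_{\Omega\text{-}\mathrm{Alg}}\big((\bfk M,\{P_\omega\}),(A,\{Q_\omega\})\big)
\;\xrightarrow{\ \sim\ }\;
\Hom_{\Omega\text{-}\mathrm{Mon}}\big((M,\{\alpha_\omega\}),(A,\{Q_\omega\})\big),
\]
where on the right $A$ is viewed as an $\Omega$-\mapped monoid by forgetting its additive structure and the linearity of the $Q_\omega$.

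I would prove this lemma in two routine steps. First, by the classical monoid-algebra adjunction, a $\bfk$-algebra homomorphism $\bfk M\to A$ is the same thing as a monoid homomorphism $M\to A$: a $\bfk$-algebra map is determined by its restriction to the $\bfk$-basis $M$, and any monoid map extends uniquely and $\bfk$-linearly. Second, for such a map $f$, the two $\bfk$-linear maps $f\circ P_\omega$ and $Q_\omega\circ f$ from $\bfk M$ to $A$ agree if and only if they agree on the spanning set $M$, i.e.\ if and only if $f\circ\alpha_\omega=Q_\omega\circ f$ as maps $M\to A$; hence $f$ is a morphism of $\Omega$-\mapped $\bfk$-algebras exactly when its restriction is a morphism of $\Omega$-\mapped monoids. (No relations among the $P_\omega$ need to be checked, since an $\Omega$-\mapped $\bfk$-algebra carries an arbitrary family of $\bfk$-linear operators.)

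I would then finish by composition. Applying the lemma with $M=\frakP(X,\Omega)$ and composing with the universal property of Theorem~\mref{thm:motzdec}(\mref{it:motmondec}) gives a natural bijection $\Hom_{\Omega\text{-}\mathrm{Alg}}(\bfk\frakP(X,\Omega),A)\cong\Hom_{\Set}(X,A)$, which is exactly the statement that $(\bfk\frakP(X,\Omega),\circ,\{\lm_\omega\ \rtm_\omega\},j_X)$ — with $\lm_\omega\ \rtm_\omega$ now the $\bfk$-linear extension of the raising operator — is the free $\Omega$-\mapped $\bfk$-algebra on $X$; this is (\mref{it:motalgdec}). Part (\mref{it:motrngdec}) follows in the same way, replacing everywhere ``monoid''/``$\bfk$-algebra''/monoid algebra by ``semigroup''/``nonunitary $\bfk$-algebra''/nonunitary semigroup algebra and invoking Theorem~\mref{thm:motzdec}(\mref{it:motsgdec}). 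Part (\mref{it:motalg0dec}) is the $X=\emptyset$ case of (\mref{it:motalgdec}), using Theorem~\mref{thm:motzdec}(\mref{it:motmon0dec}): there $j_X$ is vacuous and the statement is simply that $\bfk\frakD(\Omega)$ is the initial $\Omega$-\mapped $\bfk$-algebra.

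There is no genuine obstacle: the entire argument is formal, resting on transitivity of adjunctions together with the fact that the operator-compatibility condition both descends to and is recovered from a generating set by linearity. The only things to watch are the parallel bookkeeping between the unitary/monoid and nonunitary/semigroup cases and matching the three parts of the corollary to the three parts of Theorem~\mref{thm:motzdec}.
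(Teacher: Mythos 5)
Your proposal is correct and follows essentially the same route as the paper: the paper proves this corollary in one stroke by noting that the forgetful functor from $\Omega$-operated algebras to sets factors through $\Omega$-operated monoids and invoking the fact that the left adjoint of a composite is the composite of the left adjoints, which is exactly the content of your lemma composed with Theorem~\ref{thm:motzdec}. You merely make explicit the intermediate adjunction (monoid algebra with linearly extended operators) that the paper leaves implicit, which is a harmless elaboration.
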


\begin{proof}
(\mref{it:motalgdec}).
The forgetful functor from the category $\Omega$-${\bf OAlg}$ of $\Omega$-\mapped algebras to the category $\Set$ of sets is the composition of the forgetful functor from $\Omega$-${\bf OAlg}$ to the category $\Omega$-${\bf OMon}$ of \mapped monoids and the forgetful functor from $\Omega$-${\bf OMon}$ to $\Set$. As is well-known (for example from Theorem 1 in page 101 of~\mcite{Ma}), the adjoint functor of a composed functor is the composition of the adjoint functors. This proves (\mref{it:motalgdec}).

The proofs of the others parts are the same.
\end{proof}

\section{Free \mapped semigroups and monoids in terms of bracketed words}
\mlabel{ss:words}

\subsection{Motzkin words}
We recall the following definition~\mcite{Al,Fl,ST}.
\begin{defn} {\rm
A word from the alphabet set $X\cup \{\lm, \rtm\}$ (often denoted by $X\cup \{U, D\}$) is called an {\bf $X$-decorated Motzkin word} if it has the properties that
\begin{enumerate}
\item
the number of $\lm$ in the word equals the number of $\rtm$ in the word;
\item
counting from the left, the number of occurrence of $\lm$ is always greater or equal to the number of occurrence of $\rtm$.
\end{enumerate}
}
\mlabel{de:motzword}
\end{defn}
Thus an $X$-decorated Motzkin word is an element in the free monoid $M\big(X\cup \{\lm, \rtm\}\big)$ on the set
$X\cup \{\lm, \rtm\}$ with above two properties.
$X$-decorated Motzkin words are used to code Motzkin paths so that every up (resp. down) step in a Motzkin path corresponds to the symbol $\lm$ (resp. $\rtm$) and every level step decorated by $x\in X$ corresponds to $x$.
Under this coding, the set of Dyck paths corresponds to the set of legal bracketings~\mcite{BM,Be}, consisting of words from the alphabet set $\{\lm, \rtm\}$ with the above two properties.

We now generalize the concept of decorated Motzkin words.
Consider the free monoid
\begin{equation}
M_{X,\Omega}=M\big(X\cup \{ \lm_\omega\,\big|\, \omega\in \Omega\} \cup \{\rtm_\omega\, \big| \, \omega\in \Omega\}\big)
\mlabel{eq:mxo}
\end{equation}
on the set $X\cup \{ \lm_\omega\,\big|\, \omega\in \Omega\} \cup \{\rtm_\omega\, \big| \, \omega\in \Omega\}$. For a given $\omega\in \Omega$,
define
$$P_\omega: M_{X,\Omega}\to M_{X,\Omega},\quad
P_\omega(\frakm)=\lm_\omega \frakm \rtm_\omega, \frakm\in M_{X,\Omega}.$$
Then $M_{X,\Omega}$ is an $\Omega$-operated monoid.
Define $\frakW(X,\Omega)$ to be the $\Omega$-operated submonoid of $M_{X,\Omega}$ generated by $X$. Elements of $\frakW(X,\Omega)$ are called {\bf $(X,\Omega)$-decorated Motzkin words.}

We next show that, as in the case of Motzkin words, $(X,\Omega)$-decorated Motzkin words code $(X,\Omega)$-decorated Motzkin paths.

\begin{prop}
The $\Omega$-\mapped monoids $\frakP(X,\Omega)$ and $\frakW(X,\Omega)$ are isomorphic. Consequently, $\frakW(X,\Omega)$ is the free $\Omega$-operated monoid on $X$.
\mlabel{pp:dmotzword}
\end{prop}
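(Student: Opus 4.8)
The plan is to build an isomorphism of $\Omega$-\mapped monoids $\phi\colon\frakP(X,\Omega)\to\frakW(X,\Omega)$ and then deduce the ``consequently'' clause from Theorem~\mref{thm:motzdec}(\mref{it:motmondec}). First note that $\frakW(X,\Omega)$, being by construction an $\Omega$-\mapped submonoid of $M_{X,\Omega}$, is an $\Omega$-\mapped monoid under the restricted operators $P_\omega$, and it is generated by $X$ as such. Since $(\frakP(X,\Omega),\circ,\{\lm_\omega\ \rtm_\omega\},j_X)$ is free on $X$ and $j_X'\colon X\to\frakW(X,\Omega),\ x\mapsto x$, is a map into an $\Omega$-\mapped monoid, the universal property yields a unique morphism of $\Omega$-\mapped monoids $\phi\colon\frakP(X,\Omega)\to\frakW(X,\Omega)$ with $\phi\circ j_X=j_X'$. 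This $\phi$ is surjective, because $\im\phi$ is an $\Omega$-\mapped submonoid of $\frakW(X,\Omega)$ containing $X$, hence all of $\frakW(X,\Omega)$.

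The substantive point is injectivity, which I would get by exhibiting an explicit inverse. Define a step-reading map $\psi$ sending an $(X,\Omega)$-decorated Motzkin path to the word in $M_{X,\Omega}$ obtained by scanning its steps left to right: a level step colored $x$ becomes the letter $x$, an up step of a matching pair decorated $\omega$ becomes $\lm_\omega$, and its matching down step becomes $\rtm_\omega$. One checks that $\psi$ carries the link product to concatenation and the operator $\lm_\omega\ \rtm_\omega$ to $P_\omega=\lm_\omega(-)\rtm_\omega$, and that $\psi(j_X(x))=x$. An induction on the height of the path --- using the decomposition of an indecomposable path of height $k+1$ as $\lm_\omega\ofrakm\rtm_\omega$ exactly as in the proof of Theorem~\mref{thm:motzdec} --- shows that $\psi$ actually takes values in the $\Omega$-\mapped submonoid $\frakW(X,\Omega)$. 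Thus $\psi\colon\frakP(X,\Omega)\to\frakW(X,\Omega)$ is a morphism of $\Omega$-\mapped monoids with $\psi\circ j_X=j_X'$, so $\psi=\phi$ by uniqueness.

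Conversely, define a path-building map $\theta\colon\frakW(X,\Omega)\to\frakP(X,\Omega)$: in a word, turn each $\lm_\omega$ into an up step, each $\rtm_\omega$ into a down step, and each $x\in X$ into a level step colored $x$. The two defining properties of Motzkin words guarantee that the resulting lattice path stays in $\NN^2$ and returns to the $x$-axis, so it is a Motzkin path; and since every element of $\frakW(X,\Omega)$ is built from $X$ by products and the $P_\omega$, its occurrences of $\lm_\omega$ and $\rtm_\omega$ come in matched pairs sharing a common subscript, which is exactly the data needed to decorate each matching pair of steps. A direct check gives $\theta\circ\psi=\id$ and $\psi\circ\theta=\id$. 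Hence $\phi=\psi$ is bijective, so $\frakP(X,\Omega)\cong\frakW(X,\Omega)$ as $\Omega$-\mapped monoids; composing $j_X$ with this isomorphism transports the universal property, showing $\frakW(X,\Omega)$ is the free $\Omega$-\mapped monoid on $X$.

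The main obstacle I anticipate is the bookkeeping behind the well-definedness of $\psi$ and $\theta$ and the verification that they are mutually inverse: precisely, that the ``matching pair of steps'' structure of a decorated Motzkin path corresponds exactly to the bracket-matching structure of the associated word, with the $\Omega$-labels carried along consistently, and that $\psi$ genuinely lands in $\frakW(X,\Omega)$ and not merely in the set of all Motzkin words over $M_{X,\Omega}$. Both are cleanest to handle via the height induction indicated above rather than a direct combinatorial parsing argument.
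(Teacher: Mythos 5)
Your proposal is correct and follows essentially the same route as the paper: both obtain $\phi_{\frakP,\frakW}$ from the universal property of $\frakP(X,\Omega)$, get surjectivity from the fact that $\frakW(X,\Omega)$ is generated by $X$ as an $\Omega$-\mapped monoid, and identify $\phi_{\frakP,\frakW}$ with the explicit step-reading map. The only (minor) divergence is in the bijectivity step: you build an explicit inverse $\theta$ and check both composites, whereas the paper proves injectivity directly by noting that two distinct decorated paths, compared step by step, read off words that differ in length or in some letter.
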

\begin{proof}
By the freeness of $\frakP(X,\Omega)$, there is a unique $\Omega$-operated monoid homomorphism
\begin{equation}
 \phi_{\frakP,\frakW}: \frakP(X,\Omega) \to \frakW(X,\Omega)
 \mlabel{eq:freePW}
\end{equation}
such that $\phi_{\frakP,\frakW}(\xmb2) = x, x\in X$.
This homomorphism is surjective since $\frakW(X,\Omega)$ is generated by $X$ as an $\Omega$-operated monoid.

To prove the injectivity of $\phi_{\frakP,\frakW}$, we describe $\phi_{\frakP,\frakW}$ explicitly. Intuitively,
under $\phi_{\frakP,\frakW}$,
\begin{equation}
 \left \{ \begin{array}{l}
\mbox{up step in a Motzkin path decorated by } \omega\in \Omega \leftrightarrow \lm_\omega, \\
\mbox{down step in a Motzkin path decorated by } \omega\in\Omega \leftrightarrow \rtm_\omega, \\
\mbox{level step in a Motzkin path decorated by } x\in X \leftrightarrow x.
\end{array} \right .
\mlabel{eq:freePW2}
\end{equation}

To be more precise, we note that an undecorated Motzkin path from $(0,0)$ to $(0,n)$, as a piecewise linear function $f:[0,n]\to \RR$, is
uniquely determined by $\vec{f}=(f_1,\cdots,f_n)$ where, for each $1\leq i\leq n$,
\begin{align}
f_i: [i-1,i]\to \RR,\quad & f_i(t)=f(i-1)+\left\{
\begin{array}{l} U(t-i+1) \\ D(t-i+1), \\ L(t-i+1)
\end{array} \right . \quad t\in [i-1,i], \mlabel{eq:piece} \\
{\rm where\ }
& \left \{ \begin{array}{l} U(s)=s \\ D(s)=-s, \\ L(s)=0 \end{array} \right .  s\in [0,1].
\notag
\end{align}

More generally, an $(X,\Omega)$-decorated Motzkin path from $(0,0)$ to $(0,n)$ is a piecewise linear function $f:[0,n]\to \RR$ in Eq.~(\mref{eq:piece}) with each linear piece decorated by a certain element of $X\cup \Omega$. Thus an $(X,\Omega)$-decorated Motzkin path is uniquely determined by $\vec{F}=(F_1,\cdots,F_n)$ with $F_i=(f_i,d_i)$ where $f_i$ is as in Eq.~(\mref{eq:piece}) and $d_i\in X\cup \Omega$.
Then we have
\begin{align}
& \phi_{\frakP,\frakW}(F_1,\cdots,F_n)= w_1\cdots w_n, {\rm\ where }
\\
&w_i = \left\{ \begin{array}{ll}
    \lm_\omega, & {\rm if\ } F_i=(f_i,d_i) {\rm\ with\ } f_i=f(i-1)+U(t-i+1) {\rm\ and\ } d_i=\omega\in \Omega, \\
    \rtm_\omega, & {\rm if\ } F_i=(f_i,d_i) {\rm\ with\ } f_i=f(i-1)+D(t-i+1) {\rm\ and\ } d_i=\omega\in\Omega, \\
    x, & {\rm if\ } F_i=(f_i,d_i) {\rm\ with\ } f_i=f(i-1)+L(t-i+1) {\rm\ and\ } d_i=x\in X.
\end{array} \right .
\notag
\end{align}

Now suppose two Motzkin paths $\frakm$ and $\frakm'$ are distinct and are given by
$(F_1,\cdots,F_n)$ and $(F'_1,\cdots,F'_m)$ respectively. If $n\neq m$,  then the two words $\phi_{\frakP,\frakW}(\frakm)=w_1\cdots w_n$ and $\phi_{\frakP,\frakW}(\frakm')=w'_1\cdots w'_m$ have different length and hence are distinct. If $n=m$, then there is a $k\in \{1,\cdots,n\}$ such that
$F_i=F'_i$ for $1\leq i\leq k-1$ and $F_k\neq F'_k$. This means that in $F_k=(f_k,d_k)$ and $F'_k=(f'_k,d'_k)$ either $f_k\neq f'_k$ or $f_k=f'_k$ but $d_k\neq d'_k$.
In either case we have $w_k\neq w'_k$ and hence $\phi_{\frakP,\frakW}(\frakm)\neq \phi_{\frakP,\frakW}(\frakm')$. This proves the injectivity of $\phi_{\frakP,\frakW}$.
\end{proof}

\begin{remark}{\rm
Through the bijection $\phi_{\frakP,\frakW}$, we can use the definition of a $(X,\Omega)$-Motzkin path to characterize an $(X,\Omega)$-Motzkin word to be a word $\frakw\in M_{X,\Omega}$ such that
\begin{enumerate}
\item
ignoring the $\Omega$-decoration of $\frakw$, we have an $X$-decorated Motzkin word;
\item
for any letter $\lm$ in $\frakw$ decorated an $\omega\in\Omega$, its conjugate $\rtm$ is also decorated by the same $\omega$.
\end{enumerate}
Here for each $\lm$ in $\frakw$, the {\bf conjugate} of $\lm$ is the $\rtm$ in $\frakw$ to the right of this $\lm$ such that the subword of $\frakw$ between (and excluding) these $\lm$ and $\rtm$ is a $X$-decorated Motzkin word. The existence and uniqueness of the conjugate follow from the matching down step of an up step in the matching pair of Motzkin paths.
}
\mlabel{rk:pathword}
\end{remark}

\subsection{Bracketed words}

We use the following recursion to give an external construction of $(X,\Omega)$-decorated Motzkin words and hence of the free \mapped semigroup and free \mapped monoid over $X$.

For any set $Y$, let $S(Y)$ denote the free semigroup generated by $Y$,
let $M(Y)$ denote the free monoid generated by $Y$. For a fixed $\omega\in \Omega$, let
$\lc_\omega Y\rc_\omega$ be the set $\{ \lc_\omega y\rc_\omega \big | y\in Y\}$ which is in bijection with $Y$, but disjoint from $Y$. Also assume the sets $\lc_\omega Y\rc_\omega$ to be disjoint with each other as $\omega\in \Omega$ varies.

We now inductively define a direct system
$\{\frakS_n=\frakS_n(X,\Omega), i_{n,n+1}:\frakS_n\to \frakS_{n+1} \}_{n\in \NN}$ of free semigroups
and a direct system
$\{\frakM_n=\frakM_n(X,\Omega), \uni{i}_{n,n+1}: \frakM_n\to \frakM_{n+1} \}_{n\in \NN}$ of free monoids,
both with inclusions as the transition maps, and such that, for each $\omega\in \Omega$,
\begin{equation}
\lc_\omega \frakS_n \rc_\omega \subseteq \frakS_{n+1},\quad
    \lc_\omega \frakM_n \rc_\omega \subseteq \frakM_{n+1},\ n\in \NN.
\mlabel{eq:sgtran}
\end{equation}

We do this by first letting $\frakS_0=S(X)$ and $\frakM_0=M(X)=S(X)\cup \{\bfone\}$, and then
define
$$\frakS_1=S\big(X\cup (\cup_{\omega\in\Omega}\lc_\omega\frakS_0\rc_\omega)\big)
=S\big(X\cup (\cup_{\omega\in\Omega} \lc_\omega S(X)\rc_\omega)\big), \quad
\frakM_1=M\big(X\cup (\cup_{\omega\in \Omega}\lc_\omega \frakM_0\rc_\omega)\big)$$
with $i_{0,1}$ and $\uni{i}_{0,1}$ being the inclusions
\begin{eqnarray*}
 i_{0,1}:&& \frakS_0=S(X) \hookrightarrow \frakS_1=S\big(X\cup (\cup_{\omega\in \Omega}\lc_\omega \frakS_0\rc_\omega)\big),
 \\
 \uni{i}_{0,1}:&& \frakM_0=M(X) \hookrightarrow
    \frakM_1=M\big(X\cup (\cup_{\omega\in \Omega}\lc_\omega \frakM_0\rc_\omega)\big).
\end{eqnarray*}
Clearly, $\lc_\omega \frakS_0 \rc_\omega\subseteq \frakS_1$ and $\lc_\omega \frakM_0\rc_\omega \subseteq \frakM_1$ for each $\omega\in \Omega$.
%Note that since $\lc_\omega \frakM_0\rc_\omega \cap %\frakM_0 =\emptyset$,  $\lc_\omega \bfone\rc_\omega$ is %not the
%identity. Thus $\frakM_1\neq \frakS_1\cup \{\bfone\}$.

Inductively assume that $\frakS_{n-1}$ and $\frakM_{n-1}$ have
been defined for $n\geq 2$,
with the inclusions
\begin{equation}
  i_{n-2,n-1}: \frakS_{n-2} \hookrightarrow \frakS_{n-1} \mmbox{and}
  \uni{i}_{n-2,n-1}: \frakM_{n-2} \to \frakM_{n-1}.
\mlabel{eq:maptrans}
\end{equation}
We then define
\begin{equation}
 \frakS_n:=S\big(X\cup (\cup_{\omega\in \Omega} \lc_\omega \frakS_{n-1}\rc_\omega)\big) \mmbox{and}
 \frakM_n:=M\big(X\cup (\cup_{\omega\in\Omega} \lc_\omega\frakM_{n-1}\rc_\omega)\big ).
 \mlabel{eq:frakm}
 \end{equation}
The inclusions in Eq.~(\mref{eq:maptrans}) give the inclusions
$$
   \lc_\omega\frakS_{n-2}\rc_\omega \hookrightarrow  \lc_\omega \frakS_{n-1} \rc_\omega \mmbox{and}
   \lc_\omega \frakM_{n-2}\rc_\omega \hookrightarrow
   \lc_\omega \frakM_{n-1} \rc_\omega,
$$
yielding inclusions of free semigroups and free monoids
\allowdisplaybreaks{
\begin{eqnarray*}
i_{n-1,n}: \frakS_{n-1} &=& S\big(X\cup (\cup_{\omega\in\Omega} \lc_\omega\frakS_{n-2}\rc_\omega)\big)\hookrightarrow
     S\big(X\cup (\cup_{\omega\in\Omega}\lc_\omega\frakS_{n-1}\rc_\omega) \big) =\frakS_{n},\\
\uni{i}_{n-1,n}: \frakM_{n-1} &=& M\big(X\cup (\cup_{\omega\in \Omega}  \lc_\omega\frakM_{n-2}\rc_\omega)\big)\hookrightarrow
    M\big(X\cup (\cup_{\omega\in\Omega} \lc_\omega \frakM_{n-1}\rc_\omega)\big) =\frakM_{n}.
\end{eqnarray*}}
By Eq.~(\mref{eq:frakm}),
$\lc_\omega \frakS_{n-1} \rc_\omega \subseteq \frakS_n$ and
$\lc_\omega \frakM_{n-1} \rc_\omega \subseteq \frakM_n$.
This completes the inductive construction of the direct systems.
Define the direct limit of semigroups
\begin{equation}
 \frakS(X,\Omega)=\dirlim \frakS_n = \bigcup_{n\geq 0} \frakS_n
\mlabel{eq:uniword}
\end{equation}
whose elements are called {\bf nonunitary bracketed words} and the direct limit of monoids
\begin{equation}
 \frakM(X,\Omega)=\dirlim \frakM_n =\bigcup_{n\geq 0} \frakM_n
\mlabel{eq:nuword}
\end{equation}
whose elements are called {\bf unitary bracketed words}.
Then by Eq.~(\mref{eq:sgtran}), $\lc_\omega \frakS(X,\Omega) \rc_\omega \subseteq \frakS(X,\Omega)$ and $\lc_\omega \frakM(X,\Omega) \rc_\omega \subseteq \frakM(X,\Omega)$ for $\omega\in \Omega$. Thus
$\frakS(X,\Omega)$ (resp. $\frakM(X,\Omega)$) carries an $\Omega$-\mapped semigroup (resp. monoid) structure.

\begin{theorem}
\begin{enumerate}
\item
The $\Omega$-\mapped monoids $\frakM(X,\Omega)$ and $\frakW(X,\Omega)$ are naturally isomorphic.
\mlabel{it:wordword}
\item
The $\Omega$-\mapped monoids $\frakM(X,\Omega)$ and $\frakP(X,\Omega)$ are naturally isomorphic.
\mlabel{it:wordpath}
\item
The $\Omega$-mapped semigroups $\frakS(X,\Omega)$ and $\frakL(X,\Omega)$ are naturally isomorphic.
\mlabel{it:wordpathsg}
\end{enumerate}
\mlabel{thm:motzword}
\end{theorem}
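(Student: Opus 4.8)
The plan is to reduce all three statements to universal properties. Once one checks that the quadruple $\big(\frakM(X,\Omega),\cdot,\{\lc_\omega\ \rc_\omega\mid\omega\in\Omega\},j_X\big)$, with $j_X\colon X\hookrightarrow\frakM_0=M(X)\subseteq\frakM(X,\Omega)$, is the free $\Omega$-\mapped monoid on $X$, and that $\big(\frakS(X,\Omega),\cdot,\{\lc_\omega\ \rc_\omega\mid\omega\in\Omega\},j_X\big)$ is the free $\Omega$-\mapped semigroup on $X$, all three isomorphisms follow from uniqueness of free objects: part~(\mref{it:wordword}) by comparison with Proposition~\mref{pp:dmotzword}, part~(\mref{it:wordpath}) by comparison with Theorem~\mref{thm:motzdec}(\mref{it:motmondec}), and part~(\mref{it:wordpathsg}) by comparison with Theorem~\mref{thm:motzdec}(\mref{it:motsgdec}). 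In each case the comparison isomorphism is the unique morphism in the relevant category matching up the two copies of $j_X(X)$, and its inverse is built the same way; this also makes the isomorphisms ``natural'' in the sense intended.

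To prove freeness of $\frakM(X,\Omega)$, I would mimic the proof of Theorem~\mref{thm:motzdec}. Given an $\Omega$-\mapped monoid $(H,\{\alpha_\omega\})$ and a map $f\colon X\to H$, construct monoid homomorphisms $\free{f}_n\colon\frakM_n\to H$ by induction on $n$: for $n=0$ take the unique extension $\free{f}_0\colon M(X)\to H$ of $f$; given $\free{f}_{n-1}$, use that $\frakM_n=M\big(X\cup(\cup_{\omega\in\Omega}\lc_\omega\frakM_{n-1}\rc_\omega)\big)$ is free on the displayed generating set to define $\free{f}_n$ by $x\mapsto f(x)$ for $x\in X$ and $\lc_\omega w\rc_\omega\mapsto\alpha_\omega\big(\free{f}_{n-1}(w)\big)$ for $w\in\frakM_{n-1}$, $\omega\in\Omega$. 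One checks $\free{f}_n|_{\frakM_{n-1}}=\free{f}_{n-1}$ and $\free{f}_n\circ(\lc_\omega\ \rc_\omega)=\alpha_\omega\circ\free{f}_{n-1}$ on $\frakM_{n-1}$, passes to $\free{f}=\dirlim\free{f}_n\colon\frakM(X,\Omega)\to H$, and verifies that $\free{f}$ is a morphism of $\Omega$-\mapped monoids with $\free{f}\circ j_X=f$, unique because $\frakM_0$ generates $\frakM(X,\Omega)$ as an $\Omega$-\mapped monoid and the inductive choices at each stage are forced. The statement for $\frakS(X,\Omega)$ is proved verbatim, replacing every free monoid by the corresponding free semigroup and the level-zero extension by the unique semigroup homomorphism $S(X)\to V$ into a given $\Omega$-\mapped semigroup $(V,\{\beta_\omega\})$; this yields $\frakS(X,\Omega)\cong\frakL(X,\Omega)$.

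An explicit alternative for part~(\mref{it:wordpath}) is the ``flattening'' homomorphism $\Phi\colon\frakM(X,\Omega)\to M_{X,\Omega}$ defined recursively by $\Phi|_{\frakM_0}\colon M(X)\hookrightarrow M_{X,\Omega}$, $\Phi(x)=x$, and $\Phi(\lc_\omega w\rc_\omega)=\lm_\omega\,\Phi(w)\,\rtm_\omega$, extended multiplicatively on each $\frakM_n$; it is an $\Omega$-\mapped monoid homomorphism with image exactly $\frakW(X,\Omega)$, and is injective because the bracket structure of an element is recoverable from its image by induction on the number of bracket pairs, the innermost pairs being the matching $\lm_\omega,\rtm_\omega$ enclosing no further letters $\lm$ or $\rtm$. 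I expect the only real difficulty throughout to be bookkeeping rather than conceptual: one must verify that the freely-defined $\free{f}_n$ genuinely restricts to $\free{f}_{n-1}$ on $\frakM_{n-1}\subseteq\frakM_n$ (this uses that $\frakM_{n-1}$ is the submonoid of $\frakM_n$ generated by $X\cup(\cup_{\omega\in\Omega}\lc_\omega\frakM_{n-2}\rc_\omega)$, on which the two maps agree by the inductive hypothesis), that the operators $\lc_\omega\ \rc_\omega$ are compatible with the transition maps and hence descend to the direct limit and are intertwined by $\free{f}$, and — for the explicit route — that $\Phi$ is injective, i.e. that distinct nestings of brackets never flatten to the same word of $M_{X,\Omega}$.
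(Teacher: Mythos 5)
Your proposal is correct, but it assembles the argument differently from the paper. The paper does not first prove that $\frakM(X,\Omega)$ is free; instead it uses the freeness of $\frakW(X,\Omega)$ (already available from Theorem~\mref{thm:motzdec} via Proposition~\mref{pp:dmotzword}) to produce the unique operated-monoid morphism $\phi_{\frakW,\frakM}\colon \frakW(X,\Omega)\to\frakM(X,\Omega)$ fixing $X$, proves surjectivity by showing $X$ generates $\frakM(X,\Omega)$ as an $\Omega$-operated monoid, and proves injectivity by building a left inverse $\phi_{\frakM,\frakW}$ inductively on the filtration $\frakM_n$ --- this is exactly your flattening map $\Phi$, except that the identity $\phi_{\frakM,\frakW}\circ\phi_{\frakW,\frakM}=\id$ is obtained from the universal property of $\frakW(X,\Omega)$ rather than from a unique-readability argument. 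Parts (\mref{it:wordpath}) and (\mref{it:wordpathsg}) are then obtained by composing with $\phi_{\frakP,\frakW}^{-1}$ and restricting to the subsemigroup generated by $X$. Your main route --- proving directly that $\frakM(X,\Omega)$ (resp.\ $\frakS(X,\Omega)$) is the free $\Omega$-operated monoid (resp.\ semigroup) on $X$ by repeating the induction of Theorem~\mref{thm:motzdec} on the $\frakM_n$, and then invoking uniqueness of free objects --- is a legitimate alternative: the paper's $\phi_{\frakM,\frakW,n}$ is precisely your $\free{f}_n$ specialized to the target $H=\frakW(X,\Omega)$ with $\alpha_\omega=P_\omega$. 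Your organization makes Corollary~\mref{co:wordfree} immediate rather than a separate deduction, at the cost of carrying the full universal-property bookkeeping through the induction; the paper's organization only runs the induction for one concrete target. Two minor points: your ``explicit alternative'' $\Phi$ establishes part~(\mref{it:wordword}), not part~(\mref{it:wordpath}) (for the latter one still composes with $\phi_{\frakP,\frakW}^{-1}$, as the paper does in Eq.~(\mref{eq:freeMP})); and if you take that explicit route, the unique-readability claim for injectivity does need the inductive parsing argument you sketch, whereas the paper sidesteps it by checking the composite is the identity on $X$ and citing freeness of $\frakW(X,\Omega)$.
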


\begin{proof}
(\mref{it:wordword})
By Theorem~\mref{thm:motzdec} and Proposition~
\mref{pp:dmotzword}, $(\frakW(X), \circ, \lm\ \rtm)$ is a free $\Omega$-\mapped monoid on $X$. Thus there is a unique homomorphism of $\Omega$-\mapped monoids
\begin{equation}
\phi_{\frakW,\frakM}: \frakW(X,\Omega) \to \frakM(X,\Omega)
\mlabel{eq:freeWM}
\end{equation}
such that $\phi_{\frakW,\frakM}(x)=x.$

Let $\frakM'$ be the $\Omega$-\mapped submonoid of $\frakM(X,\Omega)$ generated by $X$. Then an inductive argument shows that $\frakM_n\subseteq \frakM'$ for all $n\geq 0$. Thus the $\Omega$-\mapped monoid $\frakM(X,\Omega)$ is generated by $X$ and therefore $\phi_{\frakW,\frakM}$ is surjective.

To prove that $\phi_{\frakW,\frakM}$ is injective, we only need to define a homomorphism of $\Omega$-\mapped monoids \begin{equation}
\phi_{\frakM,\frakW}: \frakM(X,\Omega) \to \frakW(X,\Omega)
\mlabel{eq:freeMW}
\end{equation}
such that $\phi_{\frakM,\frakW}\circ \phi_{\frakW,\frakM}=\id_{\frakW(X,\Omega)}$. For this, we define
$$ \phi_{\frakM,\frakW,n} : \frakM_n \to \frakW(X,\Omega)$$
by induction on $n\geq 0$.
When $n=0$, $\frakM_0$ is the free monoid on $X$. Thus there is a unique monoid isomorphism
$\phi_{\frakM,\frakW,0}: \frakM_0 \to \frakW(X,\Omega)$ sending $x$ to $x$, $x\in X$.

Suppose a monoid homomorphism $\phi_{\frakM,\frakW,n}: \frakM_n \to \frakW(X,\Omega)$ has been defined for $n\geq 0$. Then for each $\omega\in \Omega$, we obtain a map
\begin{equation} \lc_\omega \phi_{\frakM,\frakW,n} \rc_\omega: \lc_\omega \frakM_n \rc_\omega \to \frakW(X,\Omega), \quad \lc_\omega \frakm \rc_\omega \mapsto \lm_\omega \phi_{\frakM,\frakW,n}(\frakm) \rtm_\omega.
\mlabel{eq:pathword}
\end{equation}
Thus we obtain a homomorphism of monoids
$$ \phi_{\frakM,\frakW,n+1}: \frakM_{n+1}=M\big(X\cup (\cup_{\omega\in \Omega} \lc_\omega \frakM_n \rc_\omega)\big)
\to \frakW(X,\Omega) $$
with $\phi_{\frakM,\frakW,n+1}\big|_{\frakM_n}=
\phi_{\frakM,\frakW,n}$.
Taking the direct limit, we obtain a monoid homomorphism.
\begin{equation}
\phi_{\frakM,\frakW}=\dirlim \phi_{\frakM,\frakW,n}: \frakM(X,\Omega)=\dirlim \frakM_n \to \frakW(X,\Omega).
\mlabel{eq:wordpath}
\end{equation}
By Eq.~(\mref{eq:pathword}), for each $\omega\in \Omega$, the bracket operator $\lc_\omega\ \rc_\omega$ on $\frakM(X,\Omega)$ is compatible with the operator $P_\omega$ on $\frakW(X,\Omega)$. Thus $\phi_{\frakM,\frakW}$ is an $\Omega$-\mapped monoid homomorphism.
Further, since $\phi_{\frakM,\frakW}\circ \phi_{\frakW,\frakM}$ is the identity on $X$, by the universal property of $\frakW(X,\Omega)$, we must have
$\phi_{\frakM,\frakW}\circ \phi_{\frakW,\frakM}=\id_{\frakW(X,\Omega)}$.
\medskip

\noindent
(\mref{it:wordpath}). The isomorphism is
\begin{equation}
\phi_{\frakM,\frakP}=\phi_{\frakP,\frakW}^{-1}\circ \phi_{\frakM,\frakW}
\mlabel{eq:freeMP}
\end{equation}
for $\phi_{\frakP,\frakW}$ in Eq.~(\mref{eq:freePW}) and $\phi_{\frakM,\frakW}$ in Eq.~(\mref{eq:freeMW}).
\medskip

\noindent
(\mref{it:wordpathsg})
Since $\frakS(X,\Omega)$ (resp. $\frakL(X,\Omega)$) is the $\Omega$-\mapped semigroup of $\frakM(X,\Omega)$ (resp. $\frakP(X,\Omega)$) generated by $X$ (resp. $\{\xmb2\,\big|\, x\in X\}$), the $\Omega$-\mapped monoid isomorphism $\phi_{\frakM,\frakP}$ in Eq.~(\mref{eq:freeMP}) restricts to an isomorphism
$\phi_{\frakS,\frakL}: \frakS(X,\Omega)\to \frakL(X,\Omega)$ of $\Omega$-\mapped semigroups.
\end{proof}

\begin{remark}{\rm
As we can see in the proof of Theorem~\mref{thm:motzword}, the isomorphism $\phi_{\frakM,\frakW}: \frakM(X,\Omega) \to \frakW(X,\Omega)$ is almost like the identity map. For example,
\begin{eqnarray*}
\phi_{\frakM,\frakW}(x \lc_\omega y \rc_\omega) &=& \phi_{\frakM,\frakW}(x) \phi_{\frakM,\frakW}(\lc_\omega y\rc_\omega)
= x \lm_\omega y \rtm_\omega.
\end{eqnarray*}
The difference is that in $x\lc_\omega y\rc_\omega$, $\lc_\omega y\rc_\omega$ is a new symbol, while in $x\lm_\omega y\rtm_\omega$, $\lm_\omega y\rtm_\omega$ is a word consisting of the three symbols $\lm_\omega, y$ and $\rtm_\omega$.
Thus we can identify $\frakM(X,\Omega)$ with the $\frakW(X,\Omega)$, allowing us to use $\frakM(X,\Omega)$ to give a recursive description of $\frakW(X,\Omega)$ and use $\frakW(X,\Omega)$ to give an explicit description of $\frakM(X,\Omega)$.
\mlabel{rk:words}
}
\end{remark}

By Theorem~\mref{thm:motzdec} and Theorem~\mref{thm:motzword}, we have
\begin{coro}
Let $j_X$ denote the natural embeddings $X \to \frakM(X,\Omega)$ or $X \to \frakS(X,\Omega)$.
\begin{enumerate}
\item
    With the word concatenation product, the triple $(\frakM(X,\Omega),\lc\; \rc, j_X)$ is the free $\Omega$-\mapped monoid on $X$.
    \mlabel{it:mapmon}
\item
    With the word concatenation product, the triple $(\frakS(X,\Omega),\lc\; \rc, j_X)$ is the free $\Omega$-\mapped semigroup on $X$.
    \mlabel{it:mapsg}
\end{enumerate}
\mlabel{co:wordfree}
\end{coro}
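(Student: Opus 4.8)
The plan is to read both statements off from Theorem~\mref{thm:motzdec} and Theorem~\mref{thm:motzword}, using the standard fact that an object which is isomorphic, as an $\Omega$-\mapped monoid (resp. semigroup), to a free one is again free, provided the isomorphism is compatible with the structure map out of $X$. Since Theorem~\mref{thm:motzword}(\mref{it:wordpath}) already supplies an isomorphism $\phi_{\frakM,\frakP}\colon \frakM(X,\Omega)\to\frakP(X,\Omega)$ of $\Omega$-\mapped monoids, and Theorem~\mref{thm:motzdec}(\mref{it:motmondec}) tells us $\frakP(X,\Omega)$ is free, the only genuine task is to check that $\phi_{\frakM,\frakP}$ matches the two copies of $j_X$, and then transport the universal property along $\phi_{\frakM,\frakP}$.

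First I would record the compatibility with the embeddings of $X$. By Eq.~(\mref{eq:freeMP}) we have $\phi_{\frakM,\frakP}=\phi_{\frakP,\frakW}^{-1}\circ\phi_{\frakM,\frakW}$; since $\phi_{\frakM,\frakW}(x)=x$ and $\phi_{\frakP,\frakW}(\xmb2)=x$ for $x\in X$ (from the proofs of Theorem~\mref{thm:motzword} and Proposition~\mref{pp:dmotzword}), it follows that $\phi_{\frakM,\frakP}(j_X(x))=\phi_{\frakP,\frakW}^{-1}(x)=\xmb2$, which is exactly the embedding $j_X\colon X\to\frakP(X,\Omega)$ of Theorem~\mref{thm:motzdec}. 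The same holds for the restricted isomorphism $\phi_{\frakS,\frakL}\colon\frakS(X,\Omega)\to\frakL(X,\Omega)$ of Theorem~\mref{thm:motzword}(\mref{it:wordpathsg}): it carries the natural embedding $X\to\frakS(X,\Omega)$ to $j_X\colon X\to\frakL(X,\Omega)$.

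Next I would transfer the universal property. Given an $\Omega$-\mapped monoid $(H,\{\alpha_\omega\,\big|\,\omega\in\Omega\})$ and a set map $f\colon X\to H$, Theorem~\mref{thm:motzdec}(\mref{it:motmondec}) yields a unique $\Omega$-\mapped monoid morphism $\free{f}\colon\frakP(X,\Omega)\to H$ with $\free{f}\circ j_X=f$. Then $\free{f}\circ\phi_{\frakM,\frakP}\colon\frakM(X,\Omega)\to H$ is an $\Omega$-\mapped monoid morphism satisfying $(\free{f}\circ\phi_{\frakM,\frakP})\circ j_X=\free{f}\circ j_X=f$. For uniqueness, if $g\colon\frakM(X,\Omega)\to H$ is any $\Omega$-\mapped monoid morphism with $g\circ j_X=f$, then $g\circ\phi_{\frakM,\frakP}^{-1}\colon\frakP(X,\Omega)\to H$ is an $\Omega$-\mapped monoid morphism extending $f$, hence equals $\free{f}$ by the uniqueness clause of Theorem~\mref{thm:motzdec}(\mref{it:motmondec}); therefore $g=\free{f}\circ\phi_{\frakM,\frakP}$. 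This proves (\mref{it:mapmon}), and (\mref{it:mapsg}) follows verbatim with $\frakP(X,\Omega)$ replaced by $\frakL(X,\Omega)$, $\phi_{\frakM,\frakP}$ by $\phi_{\frakS,\frakL}$, and Theorem~\mref{thm:motzdec}(\mref{it:motmondec}) by Theorem~\mref{thm:motzdec}(\mref{it:motsgdec}).

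\textbf{Main obstacle.} There is essentially no obstacle: the substantive work is already contained in the two cited theorems. The only points deserving a moment's care are the bookkeeping just described — tracking $j_X$ through the chain $\phi_{\frakP,\frakW},\phi_{\frakM,\frakW}$, which is immediate from the explicit descriptions of those maps — and the observation that the product on $\frakM(X,\Omega)$ (resp. $\frakS(X,\Omega)$) named in the statement is precisely the word concatenation, i.e. the direct-limit multiplication on the $\frakM_n$ (resp. $\frakS_n$), which is exactly the monoid (resp. semigroup) structure for which $\phi_{\frakM,\frakW}$, and hence $\phi_{\frakM,\frakP}$, was shown to be an isomorphism; so no further verification is needed.
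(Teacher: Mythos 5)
Your proposal is correct and follows exactly the route the paper takes: the paper states this corollary with no written proof beyond the phrase ``By Theorem~\ref{thm:motzdec} and Theorem~\ref{thm:motzword}, we have,'' i.e.\ transporting freeness along the isomorphisms $\phi_{\frakM,\frakP}$ and $\phi_{\frakS,\frakL}$. Your write-up simply makes explicit the bookkeeping (compatibility with $j_X$ and the transfer of the universal property) that the paper leaves implicit.
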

We will use $j_X$ here and later to denote the natural embeddings of $X$ to various \mapped monoids and \mapped semigroups. The meaning will be clear from the context.
By the same proof as for Corollary~\mref{co:motzdec}, we further have:
\begin{coro}
Let $j_X$ denote the natural embeddings from $X$ to $\bfk\,\frakM(X)$ or $\bfk\,\frakS(X)$.
\begin{enumerate}
\item
    The triple $(\bfk\,\frakM(X),\lc\; \rc, j_X)$ is the free \mapped (unitary) $\bfk$-algebra on $X$.
    \mlabel{it:mapalgw}
\item
    The triple $(\bfk\,\frakS(X),\lc\; \rc, j_X)$ is the free \mapped nonunitary $\bfk$-algebra on $X$.
    \mlabel{it:mapnualgw}
\end{enumerate}
\mlabel{co:freetm}
\end{coro}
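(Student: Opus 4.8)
The plan is to reproduce verbatim the argument used for Corollary~\mref{co:motzdec}, with the free \mapped monoid (resp. semigroup) replaced by its bracketed-word realization. By Corollary~\mref{co:wordfree}, the assignment $X\mapsto(\frakM(X),\lc\ \rc,j_X)$ is the free \mapped monoid functor, i.e. the left adjoint of the forgetful functor $U_{\mathrm{Mon}}$ from the category ${\bf OMon}$ of \mapped monoids to $\Set$; similarly $X\mapsto(\frakS(X),\lc\ \rc,j_X)$ is the left adjoint of the forgetful functor $U_{\mathrm{Sg}}$ from the category of \mapped semigroups to $\Set$. So for part~(\mref{it:mapalgw}) it suffices to factor the forgetful functor $U:{\bf OAlg}\to\Set$, from the category of (unitary) \mapped $\bfk$-algebras to sets, as a composite $U=U_{\mathrm{Mon}}\circ V$, where $V:{\bf OAlg}\to{\bf OMon}$ sends an \mapped $\bfk$-algebra $(A,P)$ to its multiplicative monoid equipped with the same set map $P:A\to A$, and then to exhibit the monoid-algebra functor $Y\mapsto\bfk\,Y$ as the left adjoint of $V$. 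First I would check that $V$ is a well-defined functor: for an \mapped $\bfk$-algebra $(A,P)$ the triple $(A,\cdot,P)$ is indeed an \mapped monoid (one simply forgets the $\bfk$-module structure and the additivity of $P$), and every \mapped $\bfk$-algebra morphism is in particular an \mapped monoid morphism, functorially; the equality $U=U_{\mathrm{Mon}}\circ V$ is then immediate from the definitions.

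Next I would verify the adjunction $\bfk(-)\dashv V$. As recalled in the Notations, for a monoid $Y$ the free $\bfk$-module $\bfk\,Y$ carries the $\bfk$-algebra structure given by the $\bfk$-bilinear extension of the product of $Y$; we equip it with the operator obtained by $\bfk$-linearly extending $\lc\ \rc:Y\to Y$, so that $(\bfk\,Y,\lc\ \rc)$ is a unitary \mapped $\bfk$-algebra. Given an \mapped $\bfk$-algebra $(A,P)$ and an \mapped monoid morphism $g:Y\to V(A,P)$, there is a unique $\bfk$-linear map $\bar g:\bfk\,Y\to A$ extending $g$, and $\bar g$ is an algebra homomorphism because $g$ is a monoid homomorphism. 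To see that $\bar g$ intertwines the operators, evaluate on a basis element $y\in Y$: $\bar g(\lc y\rc)=g(\lc y\rc)=P(g(y))=P(\bar g(y))$, using that $g$ intertwines $\lc\ \rc$ and $P$; since $\bar g\circ\lc\ \rc$ and $P\circ\bar g$ are both $\bfk$-linear, they agree on all of $\bfk\,Y$. Conversely, any \mapped $\bfk$-algebra morphism $\bfk\,Y\to A$ restricts to an \mapped monoid morphism $Y\to V(A,P)$, and these two passages are mutually inverse and natural in $Y$ and $A$; hence $\bfk(-)$ is left adjoint to $V$.

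Finally, since the left adjoint of a composite functor is the composite of the left adjoints (Theorem~1, p.~101 of~\mcite{Ma}), the left adjoint of $U=U_{\mathrm{Mon}}\circ V$ is $X\mapsto\bfk\,\frakM(X)$, with unit the composite of $j_X:X\to\frakM(X)$ with the inclusion $\frakM(X)\hookrightarrow\bfk\,\frakM(X)$; unravelling the definition of a free object, this is precisely the statement that $(\bfk\,\frakM(X),\lc\ \rc,j_X)$ is the free \mapped $\bfk$-algebra on $X$, which gives~(\mref{it:mapalgw}). Part~(\mref{it:mapnualgw}) is identical, with monoids replaced throughout by semigroups: the forgetful functor from \mapped nonunitary $\bfk$-algebras to $\Set$ factors through the category of \mapped semigroups, its left adjoint to that category is the nonunitary semigroup-algebra functor $Y\mapsto\bfk\,Y$, and composing with $\frakS(-)$ from Corollary~\mref{co:wordfree} yields $X\mapsto\bfk\,\frakS(X)$. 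I expect the only step requiring any care to be the operator-compatibility check inside the adjunction, and as indicated this reduces to $\bfk$-linearity once it has been verified on the basis $Y$.
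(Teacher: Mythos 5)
Your proposal is correct and is essentially the paper's own argument: the paper proves this corollary ``by the same proof as for Corollary~\ref{co:motzdec},'' namely by factoring the forgetful functor from \mapped algebras to $\Set$ through \mapped monoids (resp.\ semigroups) and invoking the composition of adjoints, exactly as you do. You simply spell out the intermediate adjunction $\bfk(-)\dashv V$ and the operator-compatibility check that the paper leaves implicit.
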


%%%%%%%%%%%%%%%%%%%%%%%%%%%%%%%%%%%%%%%%%%%%%%%
\section{Free \mapped semigroups in terms of planar rooted trees}
\mlabel{sec:trees}

We follow the notations and terminologies in~\cite{Di,We}. A free tree is an undirected graph that is connected and
contains no cycles. A {\bf rooted tree} is a free tree in which a particular vertex has been distinguished as the {\bf root}. A {\bf planar rooted tree} (or ordered tree) is a rooted tree with a fixed embedding into the plane.
The {\bf depth} $\depth(T)$ of a rooted tree $T$ is the length of the longest path from its root to its leaves.

Let $\calt$ be the set of planar rooted trees.
A {\bf planar rooted forest} is a noncommutative concatenation of planar rooted trees, denoted by
$T_1\sqcup \cdots \sqcup T_b$ or simply $T_1\, \cdots\, T_b$, with $T_1,\cdots, T_b\in\calt$.
The number $b=\bread(F)$ is called the {\bf breadth} of $F$.
The {\bf depth} $\depth(F)$ of $F$ is the maximum of the depths of the trees $T_i, 1\leq i\leq b$.
Let $\calf$ be the set of {planar rooted forests}. Then
$\calf$ is the free semigroup generated by $\calt$ with the tree concatenation product.
\begin{remark}{\rm
For the rest of this paper, a tree or forest means a planar rooted tree or a planar rooted forest unless otherwise specified.
}
\end{remark}
Let $\lc T_1\, \cdots \, T_b\rc$ denote the usual {\bf grafting} of the trees $T_1,\cdots,T_b$
by adding a new root together with an edge from the new root to the root of each of the trees $T_1,\cdots, T_b$.

For two non-empty sets $X$ and $\Omega$, let $\calf(X,\Omega)$ be the set of planar rooted forests whose leaf vertices are decorated by elements of $X$ and non-leaf vertices are decorated by elements of $\Omega$.
The only vertex of the tree $\onetree$ is taken to be a leaf vertex. For example,
\begin{equation}
 \bigdect
 \mlabel{eq:bigdect}
\end{equation}
with $a,b,c,d,e,f,g,h\in X$ and $\alpha,\beta,\gamma,\delta,\sigma,\tau\in \Omega$.
As special cases, we have $\calf(X,X)$ of planar rooted forests whose vertices are decorated by elements of $X$. When $X=\Omega=\{x\}$,
$\calf(X,X)$ is identified with the planar rooted forests without decorations.

As in the above case of planar rooted forests without decorations, $\calf(X,\Omega)$, with the concatenation product, is a semigroup. Further, for $\omega\in \Omega$ and $F=T_1\,\cdots\,T_b\in \calf(X,\Omega)$, let $\lc_\omega\, F\,\rc_\omega$ be the grafting of $T_1,\cdots,T_b$ with the new root decorated by $\omega$.  Then with the grafting operators $\lc_\omega\ \rc_\omega, \omega\in \Omega$, $\calf(X,\Omega)$ is an $\Omega$-\mapped semigroup.

We describe the recursive structure on $\calf(X,\Omega)$ in algebraic terms.
For any subset $Y$ of $\calf(X,\Omega)$, let $\sqmon{Y}$ be the sub-semigroup of $\calf(X,\Omega)$ generated by $Y$.
Let $\calf_0(X,\Omega)=\sqmon{\{\bullet_x\ \big|\, x\in X\}}$, consisting of forests composed of trees $\onetree_x, x\in X$.
These are also the forests decorated by $X$ of depth zero.
Then recursively define
\begin{equation}
 \calf_n(X,\Omega)= \sqmon{X\cup (\cup_{\omega\in \Omega}\lc_\omega \calf_{n-1}(X,\Omega)\rc_\omega)}.
 \mlabel{eq:treex}
\end{equation}
It is clear that $\calf_n(X,\Omega)$ is the set of $(X,\Omega)$-decorated forests with depth less or equal to $n$ and
\begin{equation}
 \calf(X,\Omega) =\cup_{n\geq 0} \calf_n(X,\Omega)=\dirlim \calf_n(X,\Omega).
 \mlabel{lem:treex}
 \end{equation}

\begin{theorem}
Let $X$ and $\Omega$ be non-empty sets.
\begin{enumerate}
\item
$\calf(X,\Omega)$ is the free $\Omega$-\mapped semigroup on $X$.
\mlabel{it:treesg}
\item
\bfk $\calf(X,\Omega)$ is the free $\Omega$-\mapped non-unitary algebra on $X$.
\mlabel{it:treealg}
\end{enumerate}
\mlabel{thm:treefree}
\end{theorem}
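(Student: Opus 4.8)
The plan is to realize $\calf(X,\Omega)$ as an $\Omega$-\mapped semigroup isomorphic to the free object $\frakS(X,\Omega)$ of Corollary~\mref{co:wordfree}(\mref{it:mapsg}), by matching the two parallel recursive constructions level by level and transporting freeness across the isomorphism. Part~(\mref{it:treealg}) will then follow from~(\mref{it:treesg}) by exactly the composition-of-adjoint-functors argument already used for Corollary~\mref{co:motzdec}.

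First I would isolate the combinatorial fact behind the recursion~(\mref{eq:treex}): every planar rooted forest decomposes uniquely as a concatenation $T_1 \sqcup \cdots \sqcup T_b$ of planar rooted trees, and every planar rooted tree $T$ occurring in $\calf_n(X,\Omega)$ is, in exactly one way, either a decorated leaf $\bullet_x$ with $x\in X$ (precisely when $\depth(T)=0$) or a grafting $\lc_\omega F\rc_\omega$ with $\omega\in\Omega$ the decoration of the root of $T$ and $F\in\calf_{n-1}(X,\Omega)$ the forest of branches at that root (precisely when $\depth(T)\geq 1$); moreover $\omega$ and $F$ are then determined by $T$. Consequently $\calf_0(X,\Omega)$ is the \emph{free} semigroup on $\{\bullet_x\mid x\in X\}$, which we identify with $X$, and for $n\geq 1$ the semigroup $\calf_n(X,\Omega)$ is \emph{free} on the disjoint union $X\cup\big(\bigcup_{\omega\in\Omega}\lc_\omega\calf_{n-1}(X,\Omega)\rc_\omega\big)$. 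This is precisely the shape of the recursion defining $\frakS_n=\frakS_n(X,\Omega)$ in Section~\mref{ss:words}, with $\calf_n$ playing the role of $\frakS_n$. I expect this verification that the generating set in~(\mref{eq:treex}) is a \emph{free} generating set to be the only step requiring real care; everything after it is bookkeeping.

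Next I would construct, by induction on $n$, a compatible family of semigroup isomorphisms $\theta_n\colon \frakS_n\to\calf_n$ with $\theta_n|_{\frakS_{n-1}}=\theta_{n-1}$ and $\theta_n(\lc_\omega w\rc_\omega)=\lc_\omega\theta_{n-1}(w)\rc_\omega$ for $w\in\frakS_{n-1}$ and $\omega\in\Omega$. For $n=0$, $\theta_0\colon \frakS_0=S(X)\to\calf_0$ is the unique semigroup isomorphism with $\theta_0(x)=\bullet_x$. Given $\theta_{n-1}$, it induces a bijection between the free generating sets $X\cup\big(\bigcup_\omega\lc_\omega\frakS_{n-1}\rc_\omega\big)$ and $X\cup\big(\bigcup_\omega\lc_\omega\calf_{n-1}\rc_\omega\big)$ — the identity on $X$ and $\lc_\omega w\rc_\omega\mapsto\lc_\omega\theta_{n-1}(w)\rc_\omega$ on the remaining generators — which by freeness extends uniquely to a semigroup isomorphism $\theta_n$; compatibility with the inclusions and with the bracket operators is immediate from the construction. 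Passing to the direct limit, via~(\mref{eq:uniword}) on one side and~(\mref{lem:treex}) on the other, yields a semigroup isomorphism $\theta=\dirlim\theta_n\colon \frakS(X,\Omega)\to\calf(X,\Omega)$ satisfying $\theta\circ(\lc_\omega\ \rc_\omega)=(\lc_\omega\ \rc_\omega)\circ\theta$ for every $\omega\in\Omega$, that is, an isomorphism of $\Omega$-\mapped semigroups.

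Finally, by Corollary~\mref{co:wordfree}(\mref{it:mapsg}) the triple $(\frakS(X,\Omega),\lc\ \rc,j_X)$ is the free $\Omega$-\mapped semigroup on $X$; since $\theta$ is an isomorphism of $\Omega$-\mapped semigroups and $\theta\circ j_X$ is the map $x\mapsto\bullet_x$, it follows formally that $\calf(X,\Omega)$ with the grafting operators $\lc_\omega\ \rc_\omega$ and the embedding $x\mapsto\bullet_x$ is the free $\Omega$-\mapped semigroup on $X$, proving~(\mref{it:treesg}). For~(\mref{it:treealg}), the forgetful functor from the category of $\Omega$-\mapped nonunitary $\bfk$-algebras to $\Set$ factors as the forgetful functor to the category of $\Omega$-\mapped semigroups followed by the forgetful functor from there to $\Set$, and linearization $Y\mapsto\bfk Y$ is left adjoint to the former; since a composite of left adjoints is left adjoint to the composite, $\bfk\,\calf(X,\Omega)$ is the free $\Omega$-\mapped nonunitary $\bfk$-algebra on $X$.
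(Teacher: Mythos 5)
Your proof is correct, but it takes a different route from the paper's. The paper only sketches the possibility of a direct verification of the universal property; the proof it actually writes out transports freeness from the peak-free Motzkin paths $\frakL(X,\Omega)$ of Theorem~\mref{thm:motzdec}.(\mref{it:motsgdec}): the universal property gives a surjection $\phi_{\frakL,\calf}:\frakL(X,\Omega)\to\calf(X,\Omega)$, and injectivity is established by constructing an explicit combinatorial inverse $\phi_{\calf,\frakL}$ via the vertex biorder traversal of a tree (recording each non-leaf vertex as a matched pair $\lm_\omega\,\cdots\,\rtm_\omega$). You instead identify $\calf(X,\Omega)$ with the bracketed words $\frakS(X,\Omega)$ of Corollary~\mref{co:wordfree}.(\mref{it:mapsg}) by observing that the recursion (\mref{eq:treex}) has exactly the same shape as the recursion defining $\frakS_n$, the key point being that the generating set in (\mref{eq:treex}) is a \emph{free} generating set of $\calf_n(X,\Omega)$ (unique decomposition of a forest into trees, and unique presentation of each tree as $\bullet_x$ or $\lc_\omega F\rc_\omega$ with $\omega$ read off the root); you correctly flag this as the one step needing care, and your justification of it is sound. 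Your argument is more economical and stays entirely at the level of universal algebra, but it yields no explicit map between forests and paths; the paper's choice of route produces the concrete bijection $\phi_{\calf,\frakL}$, which is then reused in Theorem~\mref{thm:diag} and the Rota--Baxter constructions of Section~\mref{sec:rba}, so the extra combinatorial work there is not wasted. Your treatment of part (\mref{it:treealg}) by composing adjoints coincides with the paper's.
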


\begin{proof}
(\mref{it:treesg})
It can be proved directly following the proof of Theorem~\mref{thm:motzdec}.(\mref{it:motsgdec}), using the recursive structure on $\calf(X,\Omega)$ in Eq.~(\mref{eq:treex}). Just replace $\frakL(X,\Omega)$ and $\frakL_n(X,\Omega), n\geq 0$ by $\calf(X,\Omega)$ and $\calf_n(M,\Omega)$. We leave the details to the interested reader and turn to an indirect proof by showing that the $\Omega$-\mapped semigroup $\calf(X,\Omega)$ is isomorphic to the $\Omega$-\mapped semigroup $\frakL(X,\Omega)$. Hence $\calf(X,\Omega)$ is free by Theorem~\mref{thm:motzdec}.(\mref{it:motsgdec}).

We obtain such an isomorphism by starting with the natural set map
$$ f: X\to\calf(X,\Omega),\ x\mapsto \onetree_x,\ x\,\in\, X.$$
Then by Theorem~\mref{thm:motzdec}.(\mref{it:motsgdec}), there is a unique homomorphism
\begin{equation}
\phi_{\frakL,\calf}: \frakL(X,\Omega) \to \calf(X,\Omega)
\mlabel{eq:pathtree}
\end{equation}
of $\Omega$-\mapped semigroups such that
$\phi_{\frakL,\calf}(\xmb2 )= \onetree_x, x\in X.$
We only need to show that $\phi_{\frakL,\calf}$ is bijective.

By an inductive argument, the $\Omega$-\mapped subsemigroup of $\calf(X,\Omega)$ generated by $X$ contains $\calf_n(X,\Omega)$ for all $n\geq 0$. Thus by Eq.~(\mref{eq:treex}), $\calf(X,\Omega)$ is generated by $X$ as an $\Omega$-operated semigroup. Thus $\phi_{\frakL,\calf}$ is surjective. To prove that $\phi_{\frakL,\calf}$ is injective, we construct a homomorphism of $\Omega$-\mapped semigroups
\begin{equation}
\phi_{\calf,\frakL}: \calf(X,\Omega) \to \frakL(X,\Omega)
\mlabel{eq:freeFL}
\end{equation}
such that $\phi_{\calf,\frakL}\circ \phi_{\frakL,\calf}=\id_{\frakL(X,\Omega)}$.
This follows by multiplicity from a map
\begin{equation}
\phi_{\calf,\frakL}: \calt(X,\Omega) \to \frakL(X,\Omega) \cap
    \frakI(X,\Omega).
\mlabel{eq:treepathex}
\end{equation}
This  explicitly defined combinatorial bijection might be of interest on its own right. Trees and Motzkin paths have been related in previous works such as~\mcite{D-S}.

To define $\phi_{\calf,\frakL}$ in Eq.~(\mref{eq:treepathex}), we first combine the well-known
processes of preorder and postorder of traversing a planar rooted tree to define the process of biorder.
The {\bf vertex biorder list} of a tree $T\in \calt(X,\Omega)$ is defined as follows.
\begin{enumerate}
\item
If $T$ has only one vertex, then that vertex is the vertex biorder list of $T$;
\item
If $T$ has more than one vertices, then the root vertex of $T$ has branches $T_1,\cdots,T_k$, $k\geq 1$, listed from the left to the right. Then the vertex biorder list of $T$ is the root of $T$, followed by the vertex biorder list of $T_1$, $\cdots$, followed by the vertex biorder list of $T_k$, {\em followed by the root of $T$}.
\end{enumerate}
We use the adjective vertex with biorder to distinguish it from the edge biorder list to be introduced in Remark~\mref{rk:anglepath}.
For example, the vertex biorder list of the tree in Eq.~(\mref{eq:bigdect}) is
\begin{equation}
\alpha \beta a \gamma b c \gamma d \beta e \delta f \sigma g \sigma \tau h \tau \delta \alpha
\mlabel{eq:treewordex}
\end{equation}

It is clear that a vertex appears exactly once in the list if it is a leaf and exactly twice if it is not a leaf. The only vertex of $\onetree$ is taken to be a leaf. Thus we can record $\lm_\omega$ (instead of $\omega$) if a non-leaf vertex decorated by $\omega$ is listed for the first time and $\rtm_\omega$ (instead of $\omega$) if this vertex is listed for the second time.
This gives a word in $M_{X,\Omega}$ defined in Eq.~(\mref{eq:mxo}) and satisfies the required properties of an $(X,\Omega)$-Motzkin word described in Remark~\mref{rk:pathword} and therefore gives an $(X,\Omega)$-Motzkin word.
For example, the Motzkin word from Eq.~(\mref{eq:treewordex}) is
\begin{equation}
\lm_\alpha \lm_\beta\; a \lm_\gamma\; b\; c \rtm_\gamma d \rtm_\beta\; e \lm_\delta f \lm_\sigma\; g \rtm_\sigma \lm_\tau\; h \rtm_\tau \rtm_\delta \rtm_\alpha
\mlabel{eq:treeword2}
\end{equation}

Then through the bijection $\phi_{\frakP,\frakW}$ in Eq.~(\mref{eq:freePW2}), such an $(X,\Omega)$-decorated Motzkin word gives an $(X,\Omega)$-decorated Motzkin path.
As an example, the Motzkin word in Eq.~(\mref{eq:treeword2}) above corresponds to the  Motzkin path
\begin{equation}
 \bigdecm
 \vspace{.5cm}
\mlabel{eq:bigdecm}
\end{equation}
This completes the construction of $\phi_{\calf,\frakL}$ in Eq.~(\mref{eq:treepathex}) and hence in Eq.~(\mref{eq:freeFL}).

It is clear that this correspondence sends the concatenation of rooted forests to the link of Motzkin paths and sends the grafting operator of rooted forests to the raising operator of peak-free Motzkin paths. Since $(\phi_{\calf,\frakL}\circ \phi_{\frakL,\calf})(\xmb2)=\xmb2$ for $x\in X$, by the freeness of $\frakL(X,\Omega)$, we have $\phi_{\calf,\frakL}\circ \phi_{\frakL,\calf}=\id_{\frakL(X,\Omega)}$, as needed.

\medskip

\noindent
(\mref{it:treealg}) The proof follows in the same way as the proof of Corollary~\mref{co:freetm}.
\end{proof}

\section{Some natural bijections}
\mlabel{sec:bij}
As noted in Example~\mref{ex:mapalg}, well-known algebras with operators such as differential algebras, difference algebras and Rota-Baxter algebras are \mapped algebra with additional conditions on their operators. Thus the free objects in these categories are quotients of free \mapped algebras.
In this context, results in previous sections show that free objects in these categories are quotients of the \mapped algebras of Motzkin paths or planar rooted forests. For some of these categories it is possible to find a canonical subset of the Motzkin paths or planar rooted forests that projects to a basis in these quotients.
This is the case for Rota-Baxter algebras. It was shown in \mcite{A-M, E-G0} that free Rota-Baxter algebras on a set can be constructed from a subset of planar rooted forests with decorations on the angles. We now give similar constructions in terms of Motzkin paths and leaf decorated trees, as well as in terms of bracketed words which relates to the construction in~\mcite{E-G4}. For certain applications, these new constructions are probably more natural then the construction in terms of rooted trees with decorations on the angles.
We remark that there are several variations of constructions of free Rota-Baxter algebras, depending on whether they are regarded as the adjoint functors of the forgetful functors from the category of Rota-Baxter algebras to the category of algebras, or to the category of modules, or to the category of sets. We will construct the free Rota-Baxter algebra on a set as was considered in~\mcite{A-M,E-G0}. The constructions of the other cases are similar.

\subsection{Angularly decorated forests}
\mlabel{ss:adf}
For later references, we review the concept of
angularly decorated planar rooted forests. See~\mcite{E-G0} for further details.

Let $X$ be a non-empty set.
Let $F\in \calf$ with $\leaf=\leaf(F)$ leaves.
Let $X^F$ denote the set of pairs
$(F;\vec{x})$ where $\vec{x}$ is in $X^{(\ell(F)-1)}$.
We use the convention that $X^{\onetree}=\{(\onetree;1)\}$.
Let $X^\calf = \dotcup_{F\in \calf} X^F$.
We call $(F;\vec{x})$ an {\bf angularly decorated forest} since it can be identified with the forest $F$ together with an ordered decoration by $\vec{x}$ on the angles of $F$.
For example, we have
$$
    \big( \tg42\ ;\ x     \big) = \xldec41r, \quad
    \big( \thII43\ ;\ (x, y) \big) = \xyldec43, \quad
    \big( \ta1\ \ \td31\ ;\ (x, y) \big)=  \ta1\ {x}\!\! \begin{array}{l}\\[-.3cm]
    \ytd31 \end{array}.
$$
$\ta1\ {x}\!\! \begin{array}{l}\\[-.3cm]
    \ytd31 \end{array}$ is denoted by $\ta1 \sqcup_{x} \!\!\! \begin{array}{l}\\[-.3cm]
    \ytd31 \end{array}$ in~\cite{E-G0}.

Let $(F;\vec{x})\in X^F$.
Let $F=T_1 \cdots  T_b$ be the decomposition of $F$
into trees. We consider the corresponding decomposition of the decorated forest. If $b=1$, then $F$ is a tree and $(F;\vec{x})$
has no further decompositions. If $b>1$,
denote $\leaf_i=\leaf(T_i), 1\leq i\leq b$.
Then
$$(T_1;(x_1,\cdots, x_{\leaf_1-1})),\
(T_2; (x_{\leaf_1+1}, \cdots, x_{\leaf_1+\leaf_2-1})),
\cdots,
(T_b; (x_{\leaf_1+\cdots+\leaf_{b-1}+1}, \cdots, x_{\leaf_1+\cdots+\leaf_b}))
$$
are well-defined angularly decorated trees when  $\leaf(T_i)>1$.
If $\leaf(T_i)=1$, then $x_{\leaf_{i-1}+\leaf_i-1}=x_{\leaf_{i-1}}$
and we use
the convention $(T_i;x_{\leaf_{i-1}+\leaf_i-1})=(T_i;\bfone)$.
Thus we have,
\begin{eqnarray*}
(F;(x_1,\cdots, x_{\leaf-1}))&=&
(T_1;(x_1, \cdots, x_{\leaf_1-1})){x_{\leaf_1}}
(T_2; (x_{\leaf_1+1},\cdots, x_{\leaf_1+\leaf_2-1}))
    {x_{\leaf_1+\leaf_2}}
\\
&&\cdots {x_{\leaf_1+\cdots+\leaf_{b-1}}}
(T_b; (x_{\leaf_1+\cdots+\leaf_{b-1}+1}, \cdots, x_{\leaf_1+\cdots+\leaf_b})).
\end{eqnarray*}
We call this the {\bf standard decomposition} of $(F;\vec{x})$ and
abbreviate it as
\begin{eqnarray}
(F;\vec{x})&=&(T_1;\vec{x}_1){x_{i_1}}
(T_2;\vec{x}_2) {x_{i_2}} \cdots {x_{i_{b-1}}}
(T_b;\vec{x}_b)
= D_1 {x_{i_1}} D_2 {x_{i_2}} \cdots {x_{i_{b-1}}} D_b
\mlabel{eq:stdecm}
\end{eqnarray}
where $D_i=(T_i;\vec{x}_i), 1\leq i\leq b$.
For example,
$$
    \big(\ta1\ {\scalebox{1.15}{\tg42}}  \ {\scalebox{1.15}{\td31}}\, ; (v, x, w, y) \big)
    = \big(\ta1\,;\bfone \big)\, v \big({\scalebox{1.15}{\tg42}}\,;x)\, w \big({\scalebox{1.15}{\td31}}\,;y \big)
    = \ta1 \,{v}  \xldec41r \,{w} \ytd31
$$
We note that even though $X^\calf$ is not closed under concatenation of forests, it is closed under the grafting operator:
$\lc (F;\vec{x}) \rc = (\lc F\rc; \vec{x}).$

\subsection{The bijections}
We list below all the objects we have encountered so far in order to study their relations in Theorem~\mref{thm:diag}.
Let $X$ be a non-empty set and let $\Omega$ be a singleton. Thus we will drop $\Omega$ in the following notations.
\begin{enumerate}
\item $\frakM(X)$ is the \mapped monoid of unitary bracketed words on the alphabet set $X$, defined in Eq.~(\mref{eq:uniword});
\item $\frakS(X)$ is the \mapped semigroup of nonunitary bracketed words on the alphabet set $X$, defined in Eq.~(\mref{eq:nuword});
\item $\frakR(X)$ is the set of {\bf Rota-Baxter bracketed words}\mcite{E-G4,G-S} on the alphabet set $X$. Such a word is defined to be a word in $\frakM(X)$ that does not contain $\rc\,\lc$\,, such $\lc x\rc \lc y\rc$;
\item $\frakP(X)$ is the set of Motzkin paths with level steps decorated by $X$, considered in Corollary~\mref{co:Motz};
\item $\frakL(X)$ is the set of peak-free Motzkin paths with level steps decorated by $X$, considered in Corollary~\mref{co:Motz};
\item $\frakV(X)$ is the set of valley-free Motzkin paths with level steps decorated by $X$, consisting of Motzkin paths in $\frakP(X)$ with no down step followed immediately by an up step;
\item $\calf(X)$ is the set of planar rooted forests with leaves decorated by $X$, defined in Eq.~(\mref{lem:treex});
\item Define $\calf_\ell(X)$ to be the subset of $\calf(X)$ consisting of leaf decorated forests that do not have a vertex with adjacent non-leaf branches. Such a forest is called {\bf leaf-spaced}. For example, the tree
$$ \bigdectl
$$
is not leaf-spaced since the two right most branches, with leaves decorated by $g$ and $h$, are not separated by a leaf branch. But the tree
\begin{equation}
\bigdectls
\mlabel{eq:bigdectls}
\end{equation}
is leaf-spaced;
\item $X^\calf$ is the set of planar rooted tree with angles decorated by $X$, defined in Section~\mref{ss:adf};
\item $X^\calf_0$ be the subset of $X^\calf$ consisting of {\bf ladder-free forests}, namely those forests that do not have a {\bf ladder tree}, the latter being defined to be a subtree $\neq \onetree$ with only one leaf. Equivalently, a ladder-free forest is a forest $\neq \onetree$ that does not have a subtree $\tb2$\,. For example,
$\xtd31$ is ladder-free, but $\xldec41r$ is not ladder-free because of its right branch.
\end{enumerate}

\begin{theorem}
There are maps among the sets in the above list that fit into the following commutative diagram of bijection and inclusions.
$$
 \xymatrix{
            && \frakM(X)  \ar@{>->>}[d]^{\phi_{\frakM,\frakP}} && \\
\frakS(X) \ar@{>->>}[d]_{\phi_{\frakS,\frakL}} \ar@{^{(}->}[urr]^{\phi_{\frakS,\frakM}} && \frakP(X)&&
        \frakR(X) \ar@{>->>}[d]^{\phi_{\frakR,\frakV}} \ar@{_{(}->}[ull]_{\phi_{\frakR,\frakM}}\\
\frakL(X) \ar@{>->>}[d]_{\phi_{\frakL,\,\calf}} \ar@{^{(}->}[urr]^(0.6){\phi_{\frakL,\frakP}} & && &
        \frakV(X) \ar@{>->>}[d]^{\phi_{\frakV,\,X^\calf}} \ar@{_{(}->}[ull]_(0.6){\phi_{\frakV,\frakP}} \\
\calf(X)        &&\frakS(X)\cap \frakR(X) \ar@{>->>}[d]^(.4){\phi_{\frakS\frakR,\frakL\frakV}} \ar@{_{(}->}[uull]^(.4){\phi_{\frakS\frakR,\frakS}}
                \ar@{^{(}->}[uurr]_(0.4){\phi_{\frakS\frakR,\frakR}}&& X^\calf     \\
             &&\frakL(X) \cap \frakV(X) \ar@{>->>}[dl]_{\phi_{\frakL\frakV,\,\calf_\ell}} \ar@{_{(}->}[uull]^{\phi_{\frakL\frakV,\frakL}}
                \ar@{^{(}->}[uurr]_{\phi_{\frakL\frakV,\frakV}}
                \ar@{>->>}[dr]^{\phi_{\frakL\frakV,\,X_0^\calf}}&&\\
             & \calf_\ell(X) \ar@{>->>}[rr]_{\phi_{\calf_\ell,\,X_0^\calf}} \ar@{_{(}->}[uul]^{\phi_{\calf_\ell,\,\calf}}
                 &&
              X^\calf_0 \ar@{^{(}->}[uur]_{\phi_{X_0^\calf,\,X^\calf}}&
                        }
$$
The maps will be recalled or defined in the proof. Each of the maps is compatible with the products, whenever defined, and is compatible with the distinguished operators.
\mlabel{thm:diag}
\end{theorem}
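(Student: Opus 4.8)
The plan is to build the diagram out of three kinds of maps: the isomorphisms already produced in Sections~\mref{sec:paths}--\mref{sec:trees}, their restrictions to the various distinguished subsets, and one genuinely new combinatorial bijection. First I would collect the available maps. By Theorem~\mref{thm:motzword}.(\mref{it:wordpath}) (with $\Omega$ a singleton; see Eq.~(\mref{eq:freeMP})), $\phi_{\frakM,\frakP}\colon\frakM(X)\to\frakP(X)$ is an isomorphism of \mapped monoids sending $x$ to the level step decorated by $x$; by Theorem~\mref{thm:motzword}.(\mref{it:wordpathsg}) it restricts to the isomorphism $\phi_{\frakS,\frakL}\colon\frakS(X)\to\frakL(X)$ of \mapped semigroups; and $\phi_{\frakL,\calf}\colon\frakL(X)\to\calf(X)$ is the isomorphism of \mapped semigroups from Theorem~\mref{thm:treefree}.(\mref{it:treesg}). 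The other labelled arrows, apart from the ones treated below, are the defining set inclusions $\phi_{\frakS,\frakM}$, $\phi_{\frakR,\frakM}$, $\phi_{\frakL,\frakP}$, $\phi_{\frakV,\frakP}$, $\phi_{\calf_\ell,\calf}$, $\phi_{X^\calf_0,X^\calf}$ and the four inclusions into and out of the two intersection sets. Every face built purely from these maps commutes automatically, because any two \mapped-semigroup morphisms out of $\frakS(X)$ that agree on $X$ coincide, by the universal property of $\frakS(X)$ (Corollary~\mref{co:wordfree}); this settles, for instance, the square with vertices $\frakS(X),\frakM(X),\frakL(X),\frakP(X)$.

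Next I would handle the right-hand column. By definition $\frakR(X)$ is the subset of $\frakM(X)$ having no factor $\rc\,\lc$, and $\frakV(X)$ is the subset of $\frakP(X)$ having no down step immediately followed by an up step. Using the explicit description of $\phi_{\frakM,\frakP}$ from Proposition~\mref{pp:dmotzword} and Remark~\mref{rk:words}, under which $\lc$ becomes an up step and $\rc$ becomes a down step, the condition ``no $\rc\,\lc$'' is carried exactly to ``no down step followed by an up step''. Hence $\phi_{\frakM,\frakP}$ restricts to a bijection $\phi_{\frakR,\frakV}\colon\frakR(X)\to\frakV(X)$, and, since $\frakS(X)\cap\frakR(X)$ and $\frakL(X)\cap\frakV(X)$ are the honest intersections inside $\frakM(X)$ and $\frakP(X)$, it restricts once more to $\phi_{\frakS\frakR,\frakL\frakV}$. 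The squares linking these four sets then commute because they are restrictions of the commuting square at the top.

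The one genuinely new piece is $\phi_{\frakV,X^\calf}\colon\frakV(X)\to X^\calf$, which I would define by a recursion modelled on the construction of $\phi_{\frakL,\calf}$, but using the angular rather than the vertex decoration. Given $(F;\vec x)$ with standard decomposition $(F;\vec x)=D_1\,x_{i_1}\,D_2\cdots x_{i_{b-1}}\,D_b$ as in Eq.~(\mref{eq:stdecm}), I set $\phi_{\frakV,X^\calf}(F;\vec x)=\psi(D_1)\circ(x_{i_1})\circ\cdots\circ(x_{i_{b-1}})\circ\psi(D_b)$, where $(x_{i_j})$ denotes the level step decorated by $x_{i_j}$ and $\psi$ is defined on angularly decorated trees by: $\psi(\onetree)$ is the trivial Motzkin path, and $\psi(\lc(G;\vec y)\rc)=\lm\,\phi_{\frakV,X^\calf}(G;\vec y)\,\rtm$, which is legitimate because $\lc(G;\vec y)\rc=(\lc G\rc;\vec y)$. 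Since consecutive trees in a standard decomposition, and consecutive branches at a common vertex, are always separated by a level step, the image has no down step immediately followed by an up step, so $\phi_{\frakV,X^\calf}$ really does land in $\frakV(X)$; by construction it intertwines the grafting operator with the raising operator and respects the link product whenever the latter is defined on $\frakV(X)$. For bijectivity I would exhibit the inverse directly: a valley-free Motzkin path has a unique factorization at ground level into indecomposable pieces alternating with single level steps, valley-freeness forces this alternation to have exactly the shape of a standard decomposition, and each indecomposable piece $\lm\,\frakm'\,\rtm$ is handled by recursion, recovering $(F;\vec x)$.

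Finally I would read off the bottom of the diagram by restriction. Under $\phi_{\frakL,\calf}$ a down step immediately followed by an up step corresponds to two adjacent non-leaf branches (at a vertex, or at ground level when the trees of a forest are viewed as branches of a common root), so $\phi_{\frakL,\calf}$ maps $\frakL(X)\cap\frakV(X)$ bijectively onto the leaf-spaced forests $\calf_\ell(X)$, giving $\phi_{\frakL\frakV,\calf_\ell}$; under $\phi_{\frakV,X^\calf}$ an up step immediately followed by a down step corresponds to a subtree equal to $\lc\onetree\rc=\tb2$, i.e.\ a ladder rung, so $\phi_{\frakV,X^\calf}$ maps $\frakL(X)\cap\frakV(X)$ bijectively onto the ladder-free forests $X^\calf_0$, giving $\phi_{\frakL\frakV,X^\calf_0}$; and $\phi_{\calf_\ell,X^\calf_0}:=\phi_{\frakL\frakV,X^\calf_0}\circ\phi_{\frakL\frakV,\calf_\ell}^{-1}$ makes the bottom triangle commute by construction. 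The residual faces then commute by the same two principles (freeness, or restriction of an already-commuting face), and all the maps are compatible with products wherever defined and with the distinguished operators. I expect $\phi_{\frakV,X^\calf}$ to be the crux: because $X^\calf$ is not closed under concatenation it carries no \mapped-semigroup structure, so there is no universal property to lean on and the inverse must be built by hand, which in turn requires the careful combinatorial bookkeeping matching valleys to adjacent non-leaf branches and peaks to ladder rungs.
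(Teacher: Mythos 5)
Your proposal is correct and follows essentially the same route as the paper: assemble the previously established isomorphisms $\phi_{\frakM,\frakP}$, $\phi_{\frakS,\frakL}$, $\phi_{\frakL,\calf}$, obtain $\phi_{\frakR,\frakV}$, $\phi_{\frakS\frakR,\frakL\frakV}$, $\phi_{\frakL\frakV,\calf_\ell}$ and $\phi_{\frakL\frakV,X_0^\calf}$ as restrictions via the same combinatorial dictionaries (no $\rc\,\lc$ $\leftrightarrow$ valley-free, valley $\leftrightarrow$ adjacent non-leaf branches, peak $\leftrightarrow$ subtree $\tb2$), and build the one new bijection between $\frakV(X)$ and $X^\calf$ by the identical height/depth recursion matching the standard decomposition of a valley-free path with that of an angularly decorated forest. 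The only cosmetic differences are that you define the new map from the forest side first (the paper starts from the path side and constructs both directions as mutual inverses) and that you invoke freeness of $\frakS(X)$ for commutativity where the paper treats it as immediate from everything being a restriction of $\phi_{\frakM,\frakP}$.
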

\begin{proof}
All the inclusions are clear from the definition of the sets. So we only need to verify the claimed properties for the bijective maps.

The isomorphism $\phi_{\frakM,\frakP}$ is obtained in Eq.~(\mref{eq:freeMP}).
The isomorphism $\phi_{\frakS,\frakL}$ is the restriction of $\phi_{\frakM,\frakP}$. See Theorem~\mref{thm:motzword}.(\mref{it:wordpathsg}) and its proof.
The isomorphism $\phi_{\frakL,\calf}$ is defined in Eq.~(\mref{eq:pathtree}) whose inverse is $\phi_{\calf,\frakL}$ in Eq.~(\mref{eq:freeFL}).

$\phi_{\frakR,\frakV}$ is the restriction of the \mapped monoid isomorphism $\phi_{\frakM,\frakP}$ to $\frakR(X)$. By Theorem~\mref{thm:motzword} and Remark~\mref{rk:words}, a bracketed word $W$ is in $\frakR(X)$ if and only if the corresponding Motzkin word does not have a $\rtm$ followed immediately by a $\lm$ which means $\phi_{\frakM,\frakP}(W)$ is in $\frakV(X)$.

$\phi_{\frakS\frakR,\frakL\frakV}$ is defined to be the restriction of $\phi_{\frakM,\frakP}$. Hence its bijectivity follows from the bijectivities of $\phi_{\frakS,\frakL}$ and $\phi_{\frakR,\frakV}$ which are both restrictions of $\phi_{\frakM,\frakP}$.

$\phi_{\frakL\frakV,\calf_\ell}$ is defined to be the restriction of the bijective map $\phi_{\frakL,\calf}$ to $\frakL(X)\cap \frakV(X)$.
By the explicit description of $\phi_{\calf,\frakL}=\phi_{\frakL,\calf}^{-1}$ in the proof of Theorem~\mref{thm:treefree}, a Motzkin path $\frakm\in \frakL(X)$ has a peak if and only if the corresponding leaf decorated rooted forest $\phi_{\frakL,\calf}(\frakm)=\phi_{\frakL,\calf}(\frakm) \in \calf(X)$ has a vertex with two adjacent non-leaf branches. The bijectivity of $\phi_{\frakL\frakV,\calf_\ell}$ follows.

We will define below a bijective map
\begin{equation}
 \phi_{\frakV,X^\calf} : \frakV(X) \to X^\calf
 \mlabel{eq:pathatree0}
 \end{equation}
that restricts to a bijective map
\begin{equation}
\phi_{\frakL\frakV, X_0^\calf}: \frakL(X)\cap\frakV(X)
\to X_0^\calf
\mlabel{eq:pathatree2}
\end{equation}
and is compatible with the distinguished operators.
Then by composition, we obtain a bijective map
$$\phi_{\calf_\ell,X_0^\calf}= \phi_{\frakL\frakV,X_0^\calf}\circ \phi_{\frakL\frakV,\calf_\ell}^{-1}: \calf_\ell(X) \to \frakL(X)\cap \frakV(X) \to X_0^\calf$$
that is compatible with the distinguished operators.
See also Remark~\mref{rk:leafangle} for an explicit combinatorial description of $\phi_{\calf_\ell,X_0^\calf}.$

Thus it remains to construct a bijective map
\begin{equation}
 \phi_{\frakV,X^\calf} : \frakV(X) \to X^\calf
 \mlabel{eq:pathatree1}
 \end{equation}
with the prescribed properties above.
For this we define the direct system of mutually inverse maps
$$ \phi_{\frakV,X^\calf,n}: \frakV_n(X) \to X^{\calf_n},
\quad \phi_{X^\calf,\frakV,n}: X^{\calf_n} \to \frakV_n(X)
$$
by induction on the heights $n\geq 0$ of Motzkin paths and forests. See Remark~\mref{rk:anglepath} for an explicit combinatorial description of $\phi_{X^\calf,\frakV}$.

When $n=0$, $\frakV_n(X)=\frakP_n(X)$ is the monoid of Motzkin paths of height 0. Such a path is either
$\onetree$ or
$$\frakm= \begin{array}{c}\tiny{x_{1}}\vspace{-.3cm}\\ \mb2 \end{array}
 \circ
 \begin{array}{c}\tiny{x_2}\vspace{-.3cm}\\ \mb2 \end{array}
 \circ \cdots \circ
 \begin{array}{c}\tiny{x_b}\vspace{-.3cm}\\ \mb2 \end{array}$$
Accordingly define $\phi_{\frakV,X^\calf,0}(\onetree)=(\onetree\,;\bfone)$ and
\begin{equation}
\phi_{\frakV,X^\calf,0}(\frakm)= \onetree x_1\, \onetree\,x_2\, \onetree \cdots \onetree\, x_k\, \onetree =
(\onetree;\bfone)x_1(\onetree;\bfone)x_2 (\onetree;\bfone) \cdots (\onetree;\bfone) x_b (\onetree;\bfone).
\mlabel{eq:pathatree4}
\end{equation}
This is clearly a bijection from $\frakV_0(X)$ to $X^{\calf_0}$ since any angularly decorated forest of height 0 is either $\onetree$ or $\onetree x_1\, \onetree\,x_2\, \onetree \cdots \onetree\, x_b\, \onetree$. Take $\phi_{X^\calf,\frakV,0}=\phi_{\frakV,X^\calf,0}^{-1}.$

Let $k\geq 0$. Suppose
$\phi_{\frakV,X^\calf,k}: \frakV_k(X) \to X^{\calf_k}$ and its inverse $\phi_{X^\calf,\frakV,k}: X^{\calf_k} \to \frakV_k(X)$ have been defined.
For any $\frakm\in \frakV_{k+1}(X)$, let
\begin{equation}
\frakm=\frakm_1\circ \cdots \circ \frakm_p.
\mlabel{eq:motzdec}
\end{equation}
be the decomposition of $\frakm$ into indecomposable decorated Motzkin paths $\frakm_i\neq \onetree, 1\leq i\leq p$.
Since $\frakm$ is valley free, there are no consecutive $\frakm_i$ and $\frakm_{i+1}$ that have height greater or equal to 1.
Thus we can rewrite Eq.~(\mref{eq:motzdec}) as follows.
If $\frakm_1=\xmb2$, then rewrite $\frakm_1$ as $\onetree \circ \frakm_1$. If $\frakm_p=\xmb2$, then rewrite $\frakm_p=\frakm_p\circ \onetree$. If $\frakm_i\circ \frakm_{i+1}=\xmb2 \circ \ymb2$, then rewrite
$\frakm_i \circ \onetree \circ \frakm_{i+1}$.
Note that any indecomposable Motzkin path is of the form $\onetree, \xmb2$ or an indecomposable Motzkin path of height at least one.
In this way
Eq.~(\mref{eq:motzdec}) is uniquely rewritten as
\begin{equation}
\frakm=V_1 \circ \begin{array}{c}\scriptsize{x_{i_1}}\vspace{-.3cm}\\ \mb2 \end{array}
\circ V_2 \circ \cdots \circ V_{b-1}\circ \begin{array}{c}\scriptsize{x_{i_{b-1}}}\vspace{-.3cm}\\ \mb2 \end{array} \circ V_b
\mlabel{eq:motzdec1}
\end{equation}
where each $V_j, 1\leq j\leq b,$ is either $\onetree$ or an indecomposable valley-free Motzkin path of height at least one.
Call this the {\bf standard decomposition} of $\frakm$.
One example of such a standard decomposition is
$$
\motzstand  =  \rhsmotzstand
$$
Note that an indecomposable valley-free Motzkin path $V_j$ of height at least one is of the form $\lm \oV_j \rtm$ for another Motzkin path $\oV_j\in\frakV_k(X)$.
We can then define
\begin{equation}
\phi_{\frakV,X^\calf,k+1}(\frakm) = \phi_{\frakV,X^\calf,k+1}(V_1)x_{i_1}\phi_{\frakV,X^\calf,k+1}(V_2)\cdots \phi_{\frakV,X^\calf,k+1}(V_{b-1})x_{i_{b-1}} \phi_{\frakV,X^\calf,k+1}(V_b),
\mlabel{eq:motzdec2}
\end{equation}
with
\begin{equation} \phi_{\frakV,X^\calf,k+1}(V_j)=\left \{\begin{array}{ll}
    (\onetree, \bfone), & V_j=\onetree,\\
    \lc \phi_{\frakV,X^\calf,k}(\oV_j) \rc, & V_j=\lm \oV_j\rtm.
    \end{array} \right .
\mlabel{eq:motzdec3}
\end{equation}
Thus by the induction hypothesis, the expression is well-defined and is an angularly decorated forest in its standard decomposition in Eq.~(\mref{eq:stdecm}).

Conversely, for any given angularly decorated rooted forest $D\in X^{\calf_{k+1}}$, let
$$ D= D_1 {x_{i_1}} D_2  \cdots D_{b-1}{x_{i_{b-1}}} D_b$$
be its standard decomposition in Eq.~(\mref{eq:stdecm}). Then define
\begin{equation}
\phi_{X^\calf,\frakV,k+1}(D)=\phi_{X^\calf,\frakV,k+1}(D_1)
\circ
\begin{array}{c}\scriptsize{x_{i_1}}\vspace{-.3cm}\\ \mb2 \end{array}
\circ \phi_{X^\calf,\frakV,k+1}(D_2) \circ \cdots \circ \begin{array}{c}\scriptsize{x_{i_{b-1}}}\vspace{-.3cm}\\ \mb2 \end{array} \circ \phi_{X^\calf,\frakV,k+1}(D_b)
\mlabel{eq:anglepath}
\end{equation}
where on the right hand side
\begin{equation}
 \phi_{X^\calf,\frakV,k+1}(D_j)=\left \{\begin{array}{ll}
    \onetree, & D_j=(\onetree;\bfone), \\
    \lm \phi_{X^\calf,\frakV,k}(\oD_j) \rtm, & D_j=\lc \oD_j \rc.
    \end{array} \right.
\mlabel{eq:anglepath2}
\end{equation}
Here we note that, by the definition of the standard decomposition in Eq.~(\mref{eq:stdecm}), any $D_j$ is either $(\onetree;\bfone)$ or
$(\lc \oT_j\rc; \vec{x}_j)=\lc (\oT_j;\vec{x}_j) \rc = \lc \oD_j\rc,$
where $\oD_j=(\oT_j;\vec{x}_j)$ is in $X^{\calf_k}$.
Thus $\phi_{X^\calf,\frakV,k+1}(D)$ is well-defined by the induction hypothesis.
Then it is immediately checked that $\phi_{X^\calf,\frakV,k+1}: X^{\calf_{k+1}} \to \frakV_{k+1}(X)$ is the inverse of $\phi_{\frakV,X^\calf,k+1}:\frakV_{k+1}(X)\to X^{\calf_{k+1}}$, completing the inductive construction of $\phi_{\frakV,X^\calf,n}$. Then $\phi_{\frakV,X^\calf}:=\dirlim \phi_{\frakV,X^\calf,n}$ is bijective and is compatible with the distinguished operators on $\frakV(X)$ and on $X^\calf$.

Note that
$$\phi_{X^\calf, \frakV}(\tb2\,)=\phi_{X^\calf,\frakV}(\lc \onetree \rc)=
\lm \phi_{X^\calf,\frakV}(\onetree) \rtm= \md31$$
Thus if $D\in X^\calf$ does not have a subtree $\tb2$ then $\phi_{X^\calf, \frakV}(D)$ does not have a subpath $\md31$\!, namely $\phi_{X^\calf, \frakV}(D)$ is peak free.
 This shows that $\phi_{X^\calf,\frakV}$ restricts to a bijection $\phi_{X_0^\calf, \frakL\frakV}$ in Eq.~(\mref{eq:pathatree2}). This completes the proof of the theorem.
\end{proof}

\begin{remark}
\mlabel{rk:anglepath}
{\rm ({\bf A combinatorial description of $\phi_{X^\calf,\frakV}$}.) The $X$-decorated Motzkin path $\phi_{X^\calf,\frakV}(D)$ from an angularly decorated planar rooted tree $D$ can be described as follows.
Let $D=(T;\vec{x})$ with $T$ a planar rooted tree and $\vec{x}$ the vector of angular decorations on $T$. We first list the edges of $T$ in biorder.
The {\bf edge biorder list} of $T$ is defined as follows.
\begin{enumerate}
\item
If $T$ has only one vertex, then there is nothing in the edge biorder list of $T$;
\item
If $T$ has more than one vertices, then the root vertex of $T$ has subtrees $T_1,\cdots,T_k$, $k\geq 1$, listed from left to right. Then the edge biorder list of $T$ is
\begin{itemize}
\item $U$, followed by the edge biorder list of $T_1$, followed by $D$,
\item $\cdots$,
\item $U$, followed by the edge biorder list of $T_k$, followed by $D$.
\end{itemize}
\end{enumerate}
Modifying the ``worm" illustration of the vertex preorder in~\cite[Figure 5-14]{St}, imagine that a worm begins just left of the root of the tree and crawls counterclockwise along the outside of the tree until it returns to the start point. As it crawls along, for each of the edges it passes, it records a $\lm$ if it is crawling away from the root and records a $\rtm$ if it is crawling to the root. Note that this way, each edge is recorded twice, the first by a $\lm$ and the second time by a $\rtm$.
For example, the edge biorder list of the angular decorated tree
$$ \bigdecta $$
(ignoring the decorations for now) is
$$ \lm\,\lm\,\rtm\,\lm\,\lm\,\rtm\,\lm\,\rtm\,\lm\,\rtm\, \rtm\,\lm\,\rtm\,\rtm\,\lm\,\lm\,\lm\,\rtm\,\lm\,\rtm\, \rtm\,\lm\,\lm\,\rtm\,\lm\,\rtm\,\rtm\,\rtm\,.$$
Regarding an edge biorder list as a word $\frakw$ with the alphabet set $\{\lm, \rtm\}$, counting from the left of $\frakw$, the number of occurrences of $\lm$ (resp. $\rtm$) is the number of edges that are encountered for the first (resp. the second) time. Thus $\frakw$ is a Motzkin word by its definition (Definition~\mref{de:motzword}). In fact, it is a Dyck word in the sense that it codes a Dyck path.

Next we deduce a Motzkin word with $L$-letters from this edge biorder list by replacing each occurrence of $\rtm\,\lm\,$ by an $L$. For example, the above edge biorder list is deduced to
$$ \lm\,\lm\,L\lm\,LL\rtm\,L\rtm\,L\lm\,\lm\,L\rtm\,L\lm\, L\rtm\,\rtm\,\rtm\,.$$
Note that each time the ``worm" passes an angles of the tree it records a pair $\rtm\,\lm\,$ in the edge biorder list and hence an $L$ in the corresponding Motzkin word. So the number of angles of the tree $T$ equals the number of $L$-letters in the Motzkin path. Thus we can use the entries of $\vec{x}$ in $D=(T;\vec{x})$ to decorate and replace the $L$-letters, giving rise to an $X$-decorated Motzkin word. For our example above, we have
$$ \lm\,\lm\,a\lm\,bc\rtm\,d\rtm\,e\lm\,\lm\,f\rtm\,g\lm\, h\rtm\,\rtm\,\rtm\,$$
and hence the Motzkin path \vspace{0.3cm}
$$
\bigdecl
\vspace{0.5cm}
$$
}
\end{remark}

\begin{remark}
\mlabel{rk:leafangle}
{\rm
({\bf A combinatorial description of $\phi_{\calf_\ell,X_0^\calf}$}.) We give a direct description of the bijection $\phi_{\calf_\ell,X_0^\calf}=\phi_{\frakL\frakV,X_0^\calf}\circ \phi_{\frakL\frakV,\calf_\ell}^{-1}: \calf_\ell(X) \to X^\calf_0$. Let $L\in \calf_\ell(X)$ be a leaf-spaced forest with the leaves decorated by $X$. Construct an angularly decorated forest $D$ from $L$ as follows. For a fixed vertex of $L$, the non-leaf branches, if there is any, of this vertex divide the leaf branches of this vertex into blocks of leaves. Each such a block is one of the following forms.
\begin{enumerate}
\item
 $\bullet_{x_1} \bullet_{x_2} \cdots \bullet_{x_k}$, namely all branches of this vertex are leaf branches;
\item
 $\bullet_{x_1} \bullet_{x_2} \cdots \bullet_{x_k} \Big|$, namely there are no branches to the left of this block, but there is a non-leaf branch, denoted by $\Big|$, to its right;
\item
 $\Big|\, \bullet_{x_1} \bullet_{x_2} \cdots \bullet_{x_k} $, namely there are no branches to the right of this block, but there is a non-leaf branch, denoted by $\Big|$, to its left;
\item
 $\Big|\, \bullet_{x_1} \bullet_{x_2} \cdots \bullet_{x_k} \Big| $, namely the block is bounded from both sides by non-leaf branches.
\end{enumerate}
Then $\phi_{\frakV,X^\calf}$ does not change the non-leaf vertices, but change a leaf block of $L$ in the above four cases as follows
\begin{equation}
\phi_{\frakV,X^\calf}: \left \{
\begin{split}
\bullet_{x_1} \bullet_{x_2} \cdots \bullet_{x_k}
&
\mapsto \bullet\, x_1 \bullet x_2 \bullet \cdots \bullet x_k \bullet; \\
\bullet_{x_1} \bullet_{x_2} \cdots \bullet_{x_k}\, \Big|
& \mapsto \bullet\, x_1 \bullet x_2 \bullet \cdots \bullet x_k \Big |;\\
\Big| \,\bullet_{x_1} \bullet_{x_2} \cdots \bullet_{x_k}
& \mapsto \Big| \, x_1 \bullet x_2 \bullet \cdots \bullet x_k \bullet;\\
\Big|\, \bullet_{x_1} \bullet_{x_2} \cdots \bullet_{x_k}\, \Big|
& \mapsto \Big|\, x_1 \bullet x_2 \bullet \cdots \bullet x_k \Big |
\end{split}
\right .
\mlabel{eq:leafangle}
\end{equation}
So in the first case, an extra leaf sibling is added to create $k$ angles for the $k$ leaf decorations $x_1,\cdots,x_k$. In particular, if $L$ has a ladder subtree, then the corresponding leaf is the only (leaf or non-leaf) child of its parent, then this leaf must be split to a fork under $\phi_{\frakV,X^\calf}$. Thus $\phi_{\frakV,X^\calf}(L)$ is ladder-free.
In the second (resp. third) case, just shift the leaf decorations to the right (resp. left) angles.
In the fourth case, remove one of the leaf siblings to get the right number of angles for these $k$ leaf decorations. Note that, by the definition of $\calf_\ell(X)$, there must be a leaf between two non-leaf branches of a vertex.
Thus every angle of $\phi_{\frakV,X^\calf}(L)$ is decorated. We illustrate this by revisiting the example in Eq.~(\mref{eq:bigdectls}) and get

\begin{equation}
\begin{array}{c} \phi_{\frakV,X^\calf}: \\ \\ \end{array}
\qquad \quad \bigdectls \qquad
 \begin{array}{c}\mapsto \\
 \\ \end{array}  \qquad \bigdecta
\mlabel{eq:bigdecta}
\end{equation}
Note that the two ladder subtrees with leaves $g$ and $h$ are transformed to non-ladder subtrees.

To describe $\phi_{\frakV,X^\calf}^{-1}$, for each vertex in an angularly decorated tree $D=(T;\vec{x})\in X_0^\calf$, we similarly use the non-leaf branches of this vertex to divide the angles of this vertex into blocks, each in one of the four forms on the right hands of the correspondence in
Eq.~(\mref{eq:leafangle}). Then follow Eq.~(\mref{eq:leafangle}) backwards. Note that if two non-leaf branches of a vertex in an $F\in X^\calf_0$ has no leaf in between, then we are in the fourth case with $k=1$. Then by the rule, a new leaf vertex must be inserted with the same decoration that decorated the angle between the two branches. Therefore $\phi_{\frakV,X^\calf}^{-1}(F)$ must be leaf-spaced. This is the case for the angle decorated by $e$ and $g$ in Eq.~(\mref{eq:bigdecta}).
}
\end{remark}

\section{Constructions of free Rota-Baxter algebras}
\mlabel{sec:rba}

In \mcite{E-G0} (see also \mcite{A-M}), free Rota-Baxter algebras on a set $X$ is constructed using angularly decorated planar forests. We first recall this construction and then show how,
through the natural bijections in Theorem~\mref{thm:diag}, we can transport this Rota-Baxter algebra structure on angularly decorated forests to such a structure on Motzkin paths, leaf decorated planar forests and bracketed words.
\subsection{Review of the Rota-Baxter algebra structure on angularly decorated forests}
\mlabel{ss:rbaangle}
On the set of angularly decorated forests $X^\calf$ defined in Section~\mref{ss:adf}, define the free $\bfk$-module
$ \bfk\, X^\calf$ (denoted by $\ncsha(X)$ in~\mcite{E-G0}).
Note that if $D=(F;\vec{x})\in X^\calf$ is a tree (that is, if $F$ is a tree), then since $F$ is either $\onetree$ or $\lc \oF\rc$ for $\oF\in \calf$, we have either $D=(\onetree;\bfone)$ or $D=\lc \oD\rc$ where $\oD=(\oF;\vec{x})$.
The depth filtration on $\calf$ in Eq.~(\mref{eq:treex}) induces a depth filtration on $X^\calf$.
In~\mcite{E-G0}, we use this filtration to define a multiplication $\shprm$ on $\bfk\, X^\calf$ that is characterized by the following properties.
\begin{enumerate}
\item
$(\onetree;\bfone)$ is the multiplication identity;
\mlabel{it:angle1}
\item
If $D$ and $D'$ are angularly decorated trees not equal to $\onetree$, so $D=\lc \oD\rc, D'=\lc \oD'\rc$ for $\oD, \oD'\in X^\calf$,
then
\begin{equation}
D \shprm D' =
    \lc \oD\shprm D' \rc
    +\lc D \shprm \oD'\rc
    +\lambda\lc \oD \shprm \oD'\rc.
\mlabel{eq:shprt1}
\end{equation}
\mlabel{it:angle2}
\item
If $D=D_1x_{i_1}\cdots x_{i_{b-1}} D_b$ and
$D'=D'_1x'_{i'_1} \cdots x'_{i'_{b'-1}} D'_{b'}$ are the standard decomposition of the angularly decorated forests $D$ and $D'$ in Eq.~(\mref{eq:stdecm}), then
\begin{equation}
D \shprm D'= D_1x_{i_1}\cdots  D_{b-1}\,
x_{i_{b-1}}\,(D_b\shprm D'_1)\,x'_{i'_1} \,
    D'_{2}\,\cdots\,x'_{i'_{b'-1}} D_{b'}.
\mlabel{eq:shprt2}
\end{equation}
\mlabel{it:angle3}
\end{enumerate}
For example, applying Eq.~(\mref{eq:shprt1}) and Eq.~(\mref{eq:shprt2}) we have
\begin{eqnarray}
\xtd31 \shprm \ytd31 &=& \lc \onetree x \onetree\rc\ \shprm\ \lc \onetree y \onetree \rc \notag\\
&=& \lc (\onetree x \onetree)\ \shprm\ \ytd31 \rc
+ \lc \xtd31\ \shprm\ (\onetree y \onetree) \rc
+ \lambda \lc (\onetree x \onetree) \shprm (\onetree y \onetree) \rc \notag\\
&=& \lc \onetree x \ytd31 \rc + \lc \xtd31 y \onetree\rc
+ \lambda \lc \onetree x \onetree y \onetree \rc
\mlabel{eq:atreeex1}\\
&=& \xyrlong + \xyllong + \lambda\, \xyldec43
\notag
\bigskip
\end{eqnarray}
Similarly,
\begin{equation}
 \xtd31  \shprm \ \tb2
= \begin{array}{l}\\[-.7cm] \xldec41r \end{array}
+ \begin{array}{l}\\[-.7cm] \xthj44 \end{array}
+ \lambda  \xtd31 .
\mlabel{eq:atreeex2}
\end{equation}
Extending the product $\shprm$ bilinearly, we obtain
a binary operation
$$
\shprm:  \bfk\,X^\calf \otimes \bfk\,X^\calf \to \bfk\,X^\calf.
$$
For $(F;\vec{x}) \in X^F$, define
\begin{equation}
 P_X(F; \vec{x})=\lc (F;\vec{x})\rc=(\lc F \rc\, ; \vec{x})\in X^{\lc F\rc},
 \mlabel{eq:RBopm}
\end{equation}
extending to a linear operator $P_X$ on $\bfk\, X^\calf$. Let
\begin{equation}
  j_X: X \to \bfk\, X^\calf
  \mlabel{eq:jm}
\end{equation}
be the map sending $x\in X$ to $(\onetree \ \onetree;x)$. The following theorem is proved in~\mcite{E-G0}.

\begin{theorem}
    The quadruple $(\bfk\,X^\calf,\shprm,P_X,j_X)$ is the free Rota--Baxter algebra of
    weight $\lambda$ on the set $X$. More precisely, for any Rota--Baxter algebra $(R,P)$ and map $f:X\to R$, there is a unique Rota--Baxter algebra homomorphism
    $\free{f}: \bfk\,X^\calf \to R$ such that $f=\free{f}\circ j_X.$
    Similarly, The quadruple $(\bfk\,X^\calf_0,\shprm,P_X,j_X)$ is the free nonunitary Rota--Baxter algebra of
    weight $\lambda$ on the set $X$. Here $X^\calf_0$ is the set of ladder-free angularly decorated forests in Theorem~\mref{thm:diag}.
\mlabel{thm:freem}
\end{theorem}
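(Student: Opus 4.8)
To prove Theorem~\mref{thm:freem} directly (the statement is the one established in \mcite{E-G0}), the plan is to check that $(\bfk\,X^\calf,\shprm,P_X)$ is itself a Rota--Baxter algebra of weight $\lambda$ and then to verify its universal property, all by induction on the depth filtration $\{\bfk\,X^{\calf_n}\}_{n\ge 0}$ of Eq.~(\mref{eq:treex}), nested inside an induction on the breadth $b$ of the standard decomposition Eq.~(\mref{eq:stdecm}). The filtration step is governed by property~(\mref{it:angle2}) (i.e.\ Eq.~(\mref{eq:shprt1})), the breadth step by property~(\mref{it:angle3}) (i.e.\ Eq.~(\mref{eq:shprt2})), and property~(\mref{it:angle1}) handles the unit.

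First I would establish associativity of $\shprm$. For $D,D',D''\in X^\calf$, Eq.~(\mref{eq:shprt2}) reduces $(D\shprm D')\shprm D''=D\shprm(D'\shprm D'')$ to the case where $D,D',D''$ are trees, and unitality then reduces it to the case $D=\lc\oD\rc$, $D'=\lc\oD'\rc$, $D''=\lc\oD''\rc$. Expanding each side by Eq.~(\mref{eq:shprt1}) produces a sum of grafted terms $\lc(\cdot)\rc$ of strictly smaller total depth, with the expected $\lambda$- and $\lambda^{2}$-weights, and the inductive hypothesis matches the two sides term by term; this is the familiar computation by which the Rota--Baxter identity forces associativity, and it is the technical heart of the argument. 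Granting associativity, the Rota--Baxter relation Eq.~(\mref{eq:rba}) for $P_X$ is almost immediate: for $\oD,\oD'\in X^\calf$, the definition Eq.~(\mref{eq:RBopm}) together with Eq.~(\mref{eq:shprt1}) gives $P_X(\oD)\shprm P_X(\oD')=\lc\oD\rc\shprm\lc\oD'\rc=\lc\oD\shprm P_X(\oD')\rc+\lc P_X(\oD)\shprm\oD'\rc+\lambda\lc\oD\shprm\oD'\rc=P_X(\oD\shprm P_X(\oD'))+P_X(P_X(\oD)\shprm\oD')+\lambda P_X(\oD\shprm\oD')$, and $j_X$ from Eq.~(\mref{eq:jm}) is a well-defined map into this Rota--Baxter algebra.

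Next I would verify the universal property. Given a Rota--Baxter algebra $(R,P)$ of weight $\lambda$ and a map $f:X\to R$, I would build $\free{f}_n:\bfk\,X^{\calf_n}\to R$ by induction on $n$: for $n=0$, Eq.~(\mref{eq:shprt2}) identifies $\bfk\,X^{\calf_0}$ with the free unital associative algebra on $X$ (each $x$ being $j_X(x)$), so $\free{f}_0$ is the unique extending algebra map; for the inductive step, write $D\in X^{\calf_n}$ in standard form $D=D_1x_{i_1}\cdots x_{i_{b-1}}D_b$ with each $D_j$ equal to $(\onetree;\bfone)$ or to $\lc\oD_j\rc$ with $\oD_j\in X^{\calf_{n-1}}$, and set $\free{f}_n(D)=g_1 f(x_{i_1})g_2\cdots f(x_{i_{b-1}})g_b$, where $g_j=1_R$ or $g_j=P(\free{f}_{n-1}(\oD_j))$ accordingly, extended $\bfk$-linearly. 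After checking $\free{f}_n|_{\bfk\,X^{\calf_{n-1}}}=\free{f}_{n-1}$ I would pass to $\free{f}=\dirlim\free{f}_n$. Then $\free{f}\circ P_X=P\circ\free{f}$ is immediate from the construction, while multiplicativity of $\free{f}$ is proved by the same nested induction, the only nonformal case being the overlap factor $D_b\shprm D'_1=\lc\oD_b\rc\shprm\lc\oD'_1\rc$ created by Eq.~(\mref{eq:shprt2}), handled by expanding with Eq.~(\mref{eq:shprt1}) and applying the Rota--Baxter relation \emph{in $R$}. Uniqueness is immediate, since any Rota--Baxter algebra homomorphism $g$ with $g\circ j_X=f$ obeys exactly the recursion defining $\free{f}$, so $g=\free{f}$ by induction on depth. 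For the nonunitary statement I would note that $X^\calf_0$ is closed under $\shprm$ and under $P_X$ --- a grafting $\lc D\rc$ equals the one-edge tree $\lc\onetree\rc$ only when $D=\onetree$, which is excluded, and Eqs.~(\mref{eq:shprt1})--(\mref{eq:shprt2}) never create a one-edge subtree from ladder-free inputs --- so $\bfk\,X^\calf_0$ is a nonunitary Rota--Baxter subalgebra, and then rerun the argument with $1_R$-free decompositions and $j_X(x)=(\onetree\,\onetree;x)$.

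The hard part will be the two nested inductions --- associativity of $\shprm$ and multiplicativity of $\free{f}$ --- since both require organizing the iterated expansion of a $\shprm$-product into its grafted-tree pieces and matching them against the Rota--Baxter relation; the rest is bookkeeping with the standard decomposition and with the direct limit. An alternative route I might pursue is to transport the structure through Theorem~\mref{thm:diag}: identify $\bfk\,X^\calf$ with $\bfk\,\frakR(X)$ via the operated bijections $\phi_{\frakV,X^\calf}$ and $\phi_{\frakR,\frakV}$, and realize $\frakR(X)$, by rewriting inside the free \mapped algebra $\bfk\,\frakM(X)$ of Corollary~\mref{co:freetm}, as a canonical linear basis of the free Rota--Baxter algebra --- this is the viewpoint of \mcite{E-G4}, the inductive proof above being that of \mcite{E-G0}.
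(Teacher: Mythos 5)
The paper itself gives no proof of Theorem~\ref{thm:freem}: the sentence immediately preceding it reads ``The following theorem is proved in~\cite{E-G0}'', and the result is simply imported from that reference. So there is no in-paper argument to compare against; what you have written is a reconstruction of the cited proof, and you correctly identify it as such in your closing sentence. As an outline it is sound and follows the right strategy: the nested induction (depth via Eq.~(\ref{eq:shprt1}), breadth via Eq.~(\ref{eq:shprt2})) is exactly how associativity and the universal property are established in \cite{E-G0}, the verification of the Rota--Baxter identity for $P_X$ from Eq.~(\ref{eq:shprt1}) is correct as stated, and the recursive formula $\free{f}_n(D)=g_1f(x_{i_1})\cdots f(x_{i_{b-1}})g_b$ together with the observation that the only nontrivial overlap $D_b\shprm D'_1$ is absorbed by the Rota--Baxter relation in $R$ is the right way to get existence and multiplicativity; uniqueness follows since $D=D_1\shprm j_X(x_{i_1})\shprm\cdots\shprm D_b$ with each $D_j$ either the unit or $P_X(\oD_j)$. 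Two points deserve more than the space you give them. First, the associativity induction for three grafted trees is genuinely the technical core and cannot be waved through as ``the familiar computation''; it is where the weight-$\lambda$ bookkeeping must actually be carried out. Second, your closure claim for the nonunitary case --- that $\bfk\,X^\calf_0$ is stable under $\shprm$ --- needs its own induction: one must check that every forest appearing in $\oD\shprm D'$, $D\shprm\oD'$ and $\oD\shprm\oD'$ is again ladder-free and not equal to $\onetree$ before grafting, using that a ladder-free tree distinct from $\onetree$ has at least two leaves. Neither point is a gap in the sense of a wrong idea, but both are where the actual work lives. Your alternative route through Theorem~\ref{thm:diag} and $\frakR(X)$ is legitimate as the viewpoint of \cite{E-G4}, though within this paper it would be circular, since the paper uses Theorem~\ref{thm:freem} to transport the Rota--Baxter structure onto $\frakV(X)$ and $\frakR(X)$, not the other way around.
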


\subsection{Rota-Baxter algebra structure on Motzkin paths}
We now transport the Rota-Baxter algebra structure from $\bfk X^\calf$ to $\bfk \frakV(X)$ through the bijection $\phi_{X^\calf, \frakV}$ in Eq.~(\mref{eq:anglepath}) and its inverse $\phi_{\frakV,X^\calf}$ in Eq.~(\mref{eq:motzdec2}).
Note that an indecomposable $X$-decorated Motzkin path is either $\onetree$ or $\mb2$ or $\lm \ofrakm \rtm$ for another $X$-decorated Motzkin path $\ofrakm$.

\begin{theorem}
The bijection $\phi_{X^\calf,\frakV}: X^\calf \to \frakV$ extends to an isomorphism
$$ \phi_{X^\calf,\frakV}: (\bfk X^\calf, \shprm, P_X)
\to (\bfk \frakV, \shprp\,, \lc\ \rc)$$
of Rota-Baxter algebras where the multiplication $\shprp$ on $\bfk \frakV$ is defined recursively with respect to the height of Motzkin paths and is characterized by the following properties.
\begin{enumerate}
\item The trivial path $\onetree$ is the multiplication identity;
\mlabel{it:path1}
\item If $\frakm$ and $\frakm'$ are indecomposable $X$-decorated Motzkin paths not equal to $\onetree$, then
\begin{equation}
\frakm \shprp \frakm' = \left \{ \begin{array}{ll}
 \frakm \circ \frakm', & \frakm=\xmb2 \mbox{ or } \frakm'=\xpmb2, \\
 \lm \ofrakm \shprp \frakm'\rtm + \lm \frakm \shpr \ofrakm' \rtm
 + \lambda \lm \ofrakm \shprp \ofrakm' \rtm ,
 & \frakm= \lm \ofrakm \rtm, \frakm'=\lm \ofrakm' \rtm;
\end{array}
\right .
\mlabel{eq:mshpr2}
\end{equation}
\mlabel{it:path2}
\item
If $\frakm=\frakm_1\circ \cdots \circ \frakm_p$ and $\frakm'=\frakm'_1\circ \cdots \circ \frakm'_{p'}$ are the decompositions of $\frakm, \frakm' \in \frakV(X)$ into indecomposable paths, then
\begin{equation}
\frakm\shprp \frakm' = \frakm_1\circ \cdots \circ (\frakm_p \shprp \frakm'_1) \circ \cdots \circ \frakm_{p'}.
\mlabel{eq:mshpr1}
\end{equation}
\mlabel{it:path3}
\end{enumerate}
Further, the Rota-Baxter algebra isomorphism $\phi_{X^\calf,\frakV}$ restricts to an isomorphism
$$\phi_{X_0^\calf,\frakL\frakV}: \bfk X_0^\calf \to \bfk (\frakL\cap\frakV)$$
of nonunitary Rota-Baxter algebras.
\mlabel{thm:anglepath}
\end{theorem}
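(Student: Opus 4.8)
The plan is to transport the Rota-Baxter algebra structure from $\bfk X^\calf$ to $\bfk\frakV$ along the bijection $\phi_{X^\calf,\frakV}$ of Theorem~\mref{thm:diag}, and then to verify that the transported product is exactly the product $\shprp$ described by properties (\mref{it:path1})--(\mref{it:path3}). First I would define $\shprp$ on $\bfk\frakV$ by pushing forward the product $\shprm$: that is, set
$$\frakm\shprp\frakm' := \phi_{X^\calf,\frakV}\big(\phi_{\frakV,X^\calf}(\frakm)\shprm\phi_{\frakV,X^\calf}(\frakm')\big),$$
extended bilinearly. Since $\phi_{X^\calf,\frakV}$ is a bijection of sets compatible with the distinguished operators $P_X$ and $\lc\ \rc$ (established in Theorem~\mref{thm:diag}), this immediately makes $\phi_{X^\calf,\frakV}$ an isomorphism of Rota-Baxter algebras, provided $(\bfk\frakV,\shprp,\lc\ \rc)$ really is a Rota-Baxter algebra — which it is, being the isomorphic image of one. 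So the only genuine content is to check that this pushed-forward product coincides with the recursively defined $\shprp$ of the statement.

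The verification proceeds by induction on the height of Motzkin paths, mirroring the recursive characterization of $\shprm$ given by properties (\mref{it:angle1})--(\mref{it:angle3}). The base case is height zero: here $\phi_{\frakV,X^\calf}$ identifies height-zero Motzkin paths with height-zero angularly decorated forests $\onetree x_1\onetree\cdots\onetree x_b\onetree$ via Eq.~(\mref{eq:pathatree4}), and on these the product $\shprm$ is just concatenation in the standard-decomposition sense of Eq.~(\mref{eq:shprt2}), which matches the link product $\circ$ — giving clause (\mref{it:path1}) and the $\frakm=\xmb2$ case of (\mref{it:path2}). For the inductive step, take $\frakm,\frakm'\in\frakV(X)$ and write them in standard decomposition Eq.~(\mref{eq:motzdec1}); the key point is that $\phi_{\frakV,X^\calf}$ carries this standard decomposition to the standard decomposition Eq.~(\mref{eq:stdecm}) of the corresponding forests, by Eq.~(\mref{eq:motzdec2})--(\mref{eq:motzdec3}). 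Property (\mref{it:angle3}) then translates termwise into property (\mref{it:path3}), reducing everything to the product of two indecomposable paths of height $\geq1$. For such paths $\frakm=\lm\ofrakm\rtm$ and $\frakm'=\lm\ofrakm'\rtm$, we have $\phi_{\frakV,X^\calf}(\frakm)=\lc\phi_{\frakV,X^\calf}(\ofrakm)\rc$ and likewise for $\frakm'$, so property (\mref{it:angle2}), i.e. Eq.~(\mref{eq:shprt1}), pushes forward under $\phi_{X^\calf,\frakV}$ — using the compatibility of $\phi_{X^\calf,\frakV}$ with the bracket operators and the induction hypothesis on the lower-height paths $\ofrakm,\ofrakm'$ — to exactly the second line of Eq.~(\mref{eq:mshpr2}). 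The case where $\frakm$ or $\frakm'$ equals $\mb2$, a level step, is handled by noting that on the forest side a single $\bullet_x$ sits as a level-zero factor in the standard decomposition, so $\shprm$ with it is concatenation, matching $\circ$.

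Finally, the restriction statement: one must check that $\phi_{X^\calf,\frakV}$ carries the subalgebra $\bfk X_0^\calf$ of ladder-free decorated forests onto $\bfk(\frakL\cap\frakV)$. This is already done in Theorem~\mref{thm:diag}: the computation there shows $\phi_{X^\calf,\frakV}(\tb2)=\md31$, so $D$ has no subtree $\tb2$ iff $\phi_{X^\calf,\frakV}(D)$ has no peak $\md31$, hence $\phi_{X^\calf,\frakV}$ restricts to the bijection $\phi_{X_0^\calf,\frakL\frakV}$. Since $\bfk X_0^\calf$ is a (nonunitary) Rota-Baxter subalgebra of $\bfk X^\calf$ by Theorem~\mref{thm:freem}, its image is a nonunitary Rota-Baxter subalgebra of $\bfk\frakV$, and the restricted map is the desired isomorphism $\phi_{X_0^\calf,\frakL\frakV}$.

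I expect the main obstacle to be purely bookkeeping rather than conceptual: carefully matching the standard decomposition of a valley-free Motzkin path (Eq.~(\mref{eq:motzdec1})) with the standard decomposition of its image forest (Eq.~(\mref{eq:stdecm})) so that property (\mref{it:angle3}) cleanly translates into property (\mref{it:path3}), and making sure the boundary cases involving trivial paths $\onetree$ and single level steps $\xmb2$ (which behave as identity-like and concatenation-like elements respectively) are treated consistently on both sides. Once the dictionary between the two standard decompositions is set up, the inductive verification of Eq.~(\mref{eq:mshpr2}) from Eq.~(\mref{eq:shprt1}) is immediate.
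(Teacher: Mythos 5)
Your proposal is correct and follows essentially the same route as the paper: define $\shprp$ as the push-forward of $\shprm$ along $\phi_{X^\calf,\frakV}$ (so the isomorphism claim is automatic), then verify by induction on height---using the correspondence between the standard decomposition of a valley-free Motzkin path and that of its image forest---that the transported product satisfies the stated characterization, with the restriction to $\bfk X_0^\calf$ following from the bijection already established in Theorem~\ref{thm:diag}. The bookkeeping you flag (matching $V_b$, $V'_1$ against $\frakm_p$, $\frakm'_1$ in the two standard decompositions and the level-step boundary cases) is exactly the part the paper's proof spells out in detail.
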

As an illustration, the example in Eq.~(\mref{eq:atreeex1}) corresponds to
\begin{eqnarray}
\xmh43 \shprp \ymh43 &=& \lm \xmb2 \rtm \shprp \lm \ymb2 \rtm \notag \\
&=& \lm \xmb2 \shprp \ymh43 \rtm
    + \lm \xmh43 \shprp  \ymb2 \rtm
    + \lambda \lm \xmb2 \shprp \ymb2 \rtm
    \notag \\
&=& \lm \xymn55 \rtm + \lm \xymm54 \rtm
    + \lambda \lm \xymc3 \rtm
\mlabel{eq:pathprodex1}\\
&=& \uxuydd + \uuxdyd + \lambda \xymo56
\notag
\end{eqnarray}

\begin{proof}
We just need to show that under the bijection $\phi_{X^\calf,\frakV}$, the product $\shprm$ on $\bfk X^\calf$ characterized by (\mref{it:angle1}) -- (\mref{it:angle3}) in \S\,\mref{ss:rbaangle} corresponds to the product $\shprp$ characterized by (\mref{it:path1}) -- (\mref{it:path3})
in Theorem~\mref{thm:anglepath}.

First of all, since $\phi_{X^\calf,\frakV}(\onetree)=\onetree\in \frakV(X)$, $\onetree$ is the identity for the multiplication $\shprp$.
Next let $\frakm$ and $\frakm'$ be indecomposable $X$-decorated Motzkin paths that are not $\onetree$.
Then either $\frakm=\xmb2$ or $\frakm=\lm \ofrakm\rtm$. Similarly for $\frakm'$. Then by Eq.~(\mref{eq:pathatree4}) and Eq.~(\mref{eq:motzdec3}), we have
$$ \phi_{\frakV,X^\calf}(\frakm)=\left \{
\begin{array}{ll}
\onetree x \onetree, & \frakm= \xmb2, \\
\lc \phi_{\frakV,X^\calf}(\ofrakm) \rc, & \frakm=\lc \ofrakm \rc.
\end{array} \right .
\quad
\phi_{\frakV,X^\calf}(\frakm')=\left \{
\begin{array}{ll}
\onetree x' \onetree, & \frakm= \xpmb2, \\
\lc \phi_{\frakV,X^\calf}(\ofrakm') \rc, & \frakm'=\lc \ofrakm' \rc.
\end{array} \right .
$$
Denote $$D=\phi_{\frakV,X^\calf}(\frakm), D'=\phi_{\frakV,X^\calf}(\frakm'), \oD=\phi_{\frakV,X^\calf}(\ofrakm), \oD'=\phi_{\frakV,X^\calf}(\ofrakm').$$
It then follows from Eq.~(\mref{eq:shprt1}) and (\mref{eq:shprt2}) that
$$
\phi_{\frakV,X^\calf}(\frakm) \shprm \phi_{\frakV,X^\calf}(\frakm')
=\left \{ \begin{array}{ll}
\onetree x \onetree x' \onetree, & \frakm=\xmb2, \frakm'=\xpmb2,\\
\onetree x \lc \oD'\rc, & \frakm=\xmb2, \frakm'=\lm \ofrakm' \rtm, \\
\lc \oD \rc x' \onetree, & \frakm=\lm \ofrakm \rtm, \frakm'=\xpmb2, \\
\lc \oD \shprm D' \rc +\lc D \shprm \oD'\rc
+\lambda \lc \oD \shprm \oD' \rc, &
\frakm=\lm \ofrakm \rtm, \frakm'=\lm \ofrakm' \rtm.
\end{array} \right .
$$
By definition, the product $\shprp$ on $\bfk \frakV$
obtained from the product $\shprm$ on $\bfk X^\calf$ through the isomorphism $\phi_{X^\calf, \frakV}$ is
$$\frakm \shprp \frakm'= \phi_{X^\calf, \frakV}
(\phi_{\frakV, X^\calf}(\frakm) \shprm
\phi_{\frakV, X^\calf}(\frakm')).$$
Thus by Eq.~(\mref{eq:anglepath}) and (\mref{eq:anglepath2}) and the fact that $\phi_{\frakV,X^\calf}$ and $\phi_{X^\calf,\frakV}$ are the inverse of each and preserve the distinguished operators, we have
$$
\frakm\shprp \frakm'=\left \{ \begin{array}{ll}
\xmb2 \circ \xpmb2 = \frakm \circ \frakm', & \frakm=\xmb2, \frakm'=\xpmb2,\\
\xmb2 \circ \lm \ofrakm'\rtm = \frakm \circ \frakm', & \frakm=\xmb2, \frakm'=\lm \ofrakm' \rtm, \\
\lm \ofrakm \rtm \circ \xpmb2 = \frakm \circ \frakm', & \frakm=\lm \ofrakm \rtm, \frakm'=\xpmb2, \\
\lm \ofrakm \shprp \frakm' \rtm +\lm \frakm \shprp \ofrakm'\rtm
+\lambda \lm \ofrakm \shprp \ofrakm' \rtm, &
\frakm=\lm \ofrakm \rtm, \frakm'=\lm \ofrakm' \rtm.
\end{array} \right .
$$
This is Eq.~(\mref{eq:mshpr2}).

If $\frakm=\frakm_1\circ \cdots \circ \frakm_p$ and $\frakm'=\frakm'_1\circ \cdots \circ \frakm'_{p'}$ are the decompositions of $\frakm, \frakm' \in \frakV(X)$ into indecomposable paths. Let
$\frakm=V_1 \circ \begin{array}{c}\scriptsize{x_{i_1}}\vspace{-.3cm}\\ \mb2 \end{array}
\circ V_2 \circ \cdots \circ \begin{array}{c}\scriptsize{x_{i_{b-1}}}\vspace{-.3cm}\\ \mb2 \end{array} \circ V_b
$
and
$\frakm=V'_1 \circ \begin{array}{c}\scriptsize{x'_{i'_1}}\vspace{-.3cm}\\ \mb2 \end{array}
\circ V'_2 \circ \cdots \circ \begin{array}{c}\scriptsize{x'_{i'_{b'-1}}}\vspace{-.3cm}\\ \mb2 \end{array} \circ V'_{b'}
$
be their standard decompositions as in Eq.~(\mref{eq:motzdec1}). Then by Equations (\mref{eq:motzdec2}), (\mref{eq:anglepath}) and (\mref{eq:shprt2}), we have
\begin{equation}
\frakm\shprp \frakm' =
V_1 \circ \begin{array}{c}\scriptsize{x_{i_1}}\vspace{-.3cm}\\ \mb2 \end{array}
\circ V_2 \circ \cdots \circ \begin{array}{c}\scriptsize{x_{i_{b-1}}}\vspace{-.3cm}\\ \mb2 \end{array} \circ (V_b \shprp V'_1) \circ \begin{array}{c}\scriptsize{x'_{i'_1}}\vspace{-.3cm}\\ \mb2 \end{array}
\circ V'_2 \circ \cdots \circ \begin{array}{c}\scriptsize{x'_{i'_{b'-1}}}\vspace{-.3cm}\\ \mb2 \end{array} \circ V'_{b'}.
\mlabel{eq:anglepath3}
\end{equation}
By the definition of the standard decomposition
$\frakm=V_1 \circ \begin{array}{c}\scriptsize{x_{i_1}}\vspace{-.3cm}\\ \mb2 \end{array}
\circ V_2 \circ \cdots \circ \begin{array}{c}\scriptsize{x_{i_{b-1}}}\vspace{-.3cm}\\ \mb2 \end{array} \circ V_b
$
of $\frakm$, we see that if $\frakm_p=\xmb2$, then $V_b=\onetree$ and hence $V_1 \circ \begin{array}{c}\scriptsize{x_{i_1}}\vspace{-.3cm}\\ \mb2 \end{array}
\circ V_2 \circ \cdots \circ \begin{array}{c}\scriptsize{x_{i_{b-1}}}\vspace{-.3cm}\\ \mb2 \end{array} = \frakm$.
If $\frakm_p\neq \xmb2$, then $V_b=\frakm_p$ and hence
$V_1 \circ \begin{array}{c}\scriptsize{x_{i_1}}\vspace{-.3cm}\\ \mb2 \end{array}
\circ V_2 \circ \cdots \circ \begin{array}{c}\scriptsize{x_{i_{b-1}}}\vspace{-.3cm}\\ \mb2 \end{array} = \frakm_1\circ \cdots \circ \frakm_{p-1}.
$
Similarly, if $\frakm'_1=\xpmb2$, then $V'_1=\onetree$ and $\begin{array}{c}\scriptsize{x'_{i'_1}}\vspace{-.3cm}\\ \mb2 \end{array}
\circ V'_2 \cdots \circ \begin{array}{c}\scriptsize{x'_{i'_{b'-1}}}\vspace{-.3cm}\\ \mb2 \end{array} \circ V'_{b'}=\frakm'$.
If $\frakm'_1\neq \xpmb2$, then $V'_1=\frakm'_1$ and
$\begin{array}{c}\scriptsize{x'_{i'_1}}\vspace{-.3cm}\\ \mb2 \end{array}
\circ V'_2 \cdots \circ \begin{array}{c}\scriptsize{x'_{i'_{b'-1}}}\vspace{-.3cm}\\ \mb2 \end{array} \circ V'_{b'}=\frakm'_2\circ \cdots \circ \frakm'_{p'}$.
Then we see that in all cases of $\frakm_p$ and $\frakm'_1$, Eq.~(\mref{eq:anglepath3}) agrees with Eq.~(\mref{eq:mshpr1}).
This completes the proof that $\phi_{X^\calf,\frakV}$ in Eq.~(\mref{eq:anglepath}) is a Rota-Baxter algebra isomorphism.

Since $\phi_{X^\calf,\frakV}$ restricts to a bijection
$\phi_{X_0^\calf,\frakL\frakV}: X_0^\calf\to \frakL\cap \frakV$ by Theorem~\mref{thm:diag} (see Eq.~(\mref{eq:pathatree2})), the second part of the theorem follows.
\end{proof}

Furthermore the map $j_X: X\to X^\calf$ in Eq.~(\mref{eq:jm}) is translated to
\begin{equation}
j_X: X \to \frakP(X), \quad j_X(x)=\xmb2.
\mlabel{eq:jmm}
\end{equation}
Then by Theorem~\mref{thm:freem} and Theorem~\mref{thm:anglepath} we have
\begin{coro}
The quadruple $(\bfk\frakV(X), \shprp, \lm\ \rtm, j_X)$ {\rm (}resp. $(\bfk\, (\frakV(X)\cap \frakL(X)), \shprp, \lm\ \rtm, j_X)${\rm )} is the free Rota-Baxter algebra (resp. free nonunitary Rota-Baxter algebra) on  $X$.
\mlabel{co:motzfree}
\end{coro}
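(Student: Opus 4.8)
The plan is to obtain Corollary~\mref{co:motzfree} essentially for free by transporting the freeness of the Rota--Baxter algebra of angularly decorated forests along the Rota--Baxter algebra isomorphism constructed in Theorem~\mref{thm:anglepath}. The only general principle required is that a universal property is preserved under isomorphism of the relevant structure: if $(A,P_A,j_A)$ is the free Rota--Baxter algebra of weight $\lambda$ on a set $X$ and $\psi\colon (A,P_A)\to (B,P_B)$ is an isomorphism of Rota--Baxter algebras, then $(B,P_B,\psi\circ j_A)$ is again the free Rota--Baxter algebra of weight $\lambda$ on $X$. Indeed, given any Rota--Baxter algebra $(R,P)$ and a map $f\colon X\to R$, if $\free{f}\colon A\to R$ is the unique Rota--Baxter algebra homomorphism with $\free{f}\circ j_A=f$, then $\free{f}\circ\psi^{-1}\colon B\to R$ is a Rota--Baxter algebra homomorphism with $(\free{f}\circ\psi^{-1})\circ(\psi\circ j_A)=f$, and it is the unique such map since $\psi$ is an isomorphism.

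First I would apply this with $A=\bfk\,X^\calf$, $P_A=P_X$ and $j_A=j_X$ as in Eq.~(\mref{eq:jm}): by Theorem~\mref{thm:freem} this is the free Rota--Baxter algebra of weight $\lambda$ on $X$. By Theorem~\mref{thm:anglepath} the map $\phi_{X^\calf,\frakV}$ extends to an isomorphism $(\bfk\,X^\calf,\shprm,P_X)\to(\bfk\,\frakV(X),\shprp,\lm\ \rtm)$ of Rota--Baxter algebras. Hence $\big(\bfk\,\frakV(X),\shprp,\lm\ \rtm,\phi_{X^\calf,\frakV}\circ j_X\big)$ is the free Rota--Baxter algebra of weight $\lambda$ on $X$.

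It then remains only to check that the structure map $\phi_{X^\calf,\frakV}\circ j_X$ coincides with the map $j_X\colon X\to\frakP(X)$, $x\mapsto\xmb2$, of Eq.~(\mref{eq:jmm}). By Eq.~(\mref{eq:jm}) we have $j_X(x)=(\onetree\ \onetree\,;x)$, whose standard decomposition in the sense of Eq.~(\mref{eq:stdecm}) is $(\onetree;\bfone)\,x\,(\onetree;\bfone)$; applying the $n=0$ formula Eq.~(\mref{eq:pathatree4}) for $\phi_{X^\calf,\frakV}$ yields $\onetree\circ\xmb2\circ\onetree=\xmb2$, which is exactly $j_X(x)$ in the sense of Eq.~(\mref{eq:jmm}). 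This proves the unitary statement. For the nonunitary statement I would argue identically, replacing Theorem~\mref{thm:freem} by its second assertion that $(\bfk\,X_0^\calf,\shprm,P_X,j_X)$ is the free nonunitary Rota--Baxter algebra on $X$, and using that $\phi_{X^\calf,\frakV}$ restricts to the Rota--Baxter algebra isomorphism $\phi_{X_0^\calf,\frakL\frakV}\colon\bfk\,X_0^\calf\to\bfk\,(\frakL(X)\cap\frakV(X))$ of Theorem~\mref{thm:anglepath}; the same computation shows $\phi_{X^\calf,\frakV}(j_X(x))=\xmb2\in\frakL(X)\cap\frakV(X)$.

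I do not expect a genuine obstacle here: all of the substantive content --- constructing the product $\shprp$ recursively in the height of Motzkin paths and proving that $\phi_{X^\calf,\frakV}$ respects products and the distinguished operators --- is already carried out in Theorem~\mref{thm:anglepath}, which in turn rests on Theorems~\mref{thm:diag} and~\mref{thm:freem}. The only point requiring care is the bookkeeping in the preceding paragraph, namely verifying that the map obtained by transport of structure is literally the embedding $x\mapsto\xmb2$ that one wants in the statement, so that Corollary~\mref{co:motzfree} reads as intended; this is the content of the short computation above.
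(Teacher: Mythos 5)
Your proposal is correct and follows exactly the route the paper takes: the paper likewise deduces Corollary~\mref{co:motzfree} by transporting the freeness statement of Theorem~\mref{thm:freem} along the Rota--Baxter algebra isomorphism of Theorem~\mref{thm:anglepath}, after observing that the structure map of Eq.~(\mref{eq:jm}) translates to the embedding $x\mapsto$ the single $x$-decorated level step of Eq.~(\mref{eq:jmm}). Your explicit check that $\phi_{X^\calf,\frakV}\circ j_X$ agrees with this embedding, and the parallel nonunitary argument via the restriction $\phi_{X_0^\calf,\frakL\frakV}$, merely spell out what the paper leaves implicit.
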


\subsection{Rota-Baxter algebra structure on bracketed words}
Through the natural bijection
$\phi_{\frakR,\frakV}: \frakR(X) \to \frakV(X)$ in Theorem~\mref{thm:diag} which is the restriction of
$\phi_{\frakM,\frakP}:\frakM(X) \to \frakP(X)$ in Eq.~(\mref{eq:freeMP}), the Rota-Baxter algebra structure on $\bfk \frakV(X)$ in Theorem~\mref{thm:anglepath} is transported to a Rota-Baxter algebra structure on $\bfk \frakR(X)$, giving another construction of the free Rota-Baxter algebra on $X$, in terms of bracketed words. See also~\mcite{E-G4,G-S} for variations of this construction.

A bracketed word $W\in \frakM(X)$ is called {\bf indecomposable} if either $W=\bfone$ or $W=x\in X$ or
$W=\lc \oW\rc$ where $\oW$ is another bracketed word. Clearly, $W\in \frakM(X)$ is indecomposable if and only if the Motzkin path $\phi_{\frakM,\frakP}(W)\in \frakP(X)$ is indecomposable. It then follows that any bracketed word has a unique decomposition as a product of indecomposable bracketed words.

Since $\phi_{\frakM,\frakP}$ is an isomorphism of \mapped monoids, its restriction $\phi_{\frakR,\frakV}$ and its inverse $\phi_{\frakV,\frakR}$ are compatible with the multiplications (link product on paths and concatenation product on words) and the distinguished operators (the raising and bracketing operators). Then
we transport the product $\shprp$ on $\bfk X^{\calf}$, characterized by Eq.~(\mref{eq:mshpr2}) and Eq.~(\mref{eq:mshpr1}), to a product $\shprw$ on $\bfk \frakR(X)$, characterized by the following properties.
\begin{enumerate}
\item $\bfone$ is the multiplication identity.
\item If $W$ and $W'$ are indecomposable words in $\frakR(X)$ not equal to $\bfone$, then
\begin{equation}
W \shprw W' = \left \{ \begin{array}{ll}
 W\, W' \mbox{ (word concatenation)}, & W=x \mbox{ or } W'=x', x,x'\in X, \\
 \lc \oW \shprw W'\rc + \lc W \shprw \oW' \rc
 + \lambda \lc \oW \shprw \oW' \rc ,
 & W= \lc \oW \rc, W'=\lc \oW' \rc.
\end{array}
\right .
\mlabel{eq:wshpr2}
\end{equation}
\item
If $W=W_1 \cdots  W_b$ and $W'=W'_1 \cdots  W'_{b'}$ are the decompositions of $W, W'\in \frakR(X)$ into indecomposable words, then
\begin{equation}
W\shprw W' = W_1 \cdots  (W_b \shprw W'_1)  \cdots W'_{b'}.
\mlabel{eq:wshpr1}
\end{equation}
\end{enumerate}

The example in Eq.~(\mref{eq:pathprodex1}) corresponds to
\begin{equation}
\lc x \rc \shprw \lc y \rc
= \lc x \shprw \lc y \rc \rc
    + \lc \lc x \rc \shprw  y \rc
    + \lambda \lc x \shprw y \rc
= \lc x \lc y \rc \rc
    + \lc \lc x \rc   y \rc
    + \lambda \lc x  y \rc
    \mlabel{eq:wordprodex1}
\end{equation}
Similarly,
\begin{equation}
\lc x \rc \shprw \lc \bfone \rc
= \lc x \shprw \lc \bfone \rc \rc
    + \lc \lc x \rc \shprw  \bfone \rc
    + \lambda \lc x \shprw \bfone \rc
= \lc x \lc \bfone \rc \rc
    + \lc \lc x \rc \rc
    + \lambda \lc x  \rc
   \mlabel{eq:wordprodex2}
\end{equation}

Further the map $j_X: X\to \frakR(X)$ in Eq.~(\mref{eq:jmm}) is transported to
\begin{equation}
j_X: X \to \frakS(X)\cap \frakR(X)\subseteq \frakR(X), \quad j_X(x)=x.
\mlabel{eq:jw}
\end{equation}
Then by Theorem~\mref{thm:anglepath} and Corollary~\mref{co:motzfree} we have
\begin{coro}
\begin{enumerate}
\item
The bijection $\phi_{\frakR,\frakV}: \frakR(X)\to \frakV(X)$ extends to an isomorphism
$\phi_{\frakR,\frakV}: (\bfk \frakR(X), \shprw, \lc\ \rc)\to (\bfk \frakV(X), \shprp, \lm\ \rtm)$ of Rota-Baxter algebras. This isomorphism restricts to
an isomorphism
$\phi_{\frakS\frakR,\frakL\frakV}: (\bfk (\frakS(X) \cap \frakR(X)), \shprw, \lc\ \rc)\to (\bfk (\frakL(X)\cap \frakV(X)), \shprp, \lm\ \rtm)$ of nonunitary Rota-Baxter algebras.
\item
The quadruple $(\bfk\frakR(X), \shprw, \lc\ \rc, j_X)$ {\rm (}resp. $(\bfk\,(\frakS(X)\cap\frakR(X)), \shprw, \lc\ \rc, j_X)${\rm )} is the free Rota-Baxter algebra {\rm (}resp. free nonunitary Rota-Baxter algebra{\rm )} on  $X$.
\end{enumerate}
\mlabel{co:rbwordfree}
\end{coro}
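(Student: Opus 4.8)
The plan is to deduce everything by transport of structure along the bijection $\phi_{\frakR,\frakV}$, since the product $\shprw$ was \emph{defined} as the pullback of $\shprp$ and the substantive work is already contained in Theorem~\mref{thm:anglepath}. First I would extend $\phi_{\frakR,\frakV}$ by $\bfk$-linearity to an isomorphism of $\bfk$-modules $\bfk\frakR(X)\to\bfk\frakV(X)$; this is legitimate because $\phi_{\frakR,\frakV}$ is a bijection of the underlying bases by Theorem~\mref{thm:diag}. Because $\phi_{\frakR,\frakV}$ is the restriction of the \mapped monoid isomorphism $\phi_{\frakM,\frakP}$ of Eq.~(\mref{eq:freeMP}), this extended map already commutes with the concatenation product of words and the link product of paths and intertwines the bracketing operator $\lc\ \rc$ with the raising operator $\lm\ \rtm$. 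Since $\shprw$ is by construction the pullback of $\shprp$ along $\phi_{\frakR,\frakV}$, the map $\phi_{\frakR,\frakV}$ is then a Rota-Baxter algebra isomorphism by construction, and all that remains for the first part is to check that this pulled-back product coincides with the one recursively characterized by the identity axiom, Eq.~(\mref{eq:wshpr2}) and Eq.~(\mref{eq:wshpr1}).

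For that check I would match the case distinctions termwise. A word $W\in\frakM(X)$ is indecomposable exactly when the Motzkin path $\phi_{\frakM,\frakP}(W)$ is indecomposable, and moreover $\phi_{\frakR,\frakV}(\bfone)=\onetree$, $\phi_{\frakR,\frakV}(x)=\xmb2$ for $x\in X$, and $\phi_{\frakR,\frakV}(\lc\oW\rc)=\lm\,\phi_{\frakR,\frakV}(\oW)\,\rtm$; hence the dichotomy ``letter $x$ versus bracket $\lc\oW\rc$'' for an indecomposable word $\ne\bfone$ corresponds precisely to the dichotomy ``$\xmb2$ versus $\lm\ofrakm\rtm$'' for an indecomposable path $\ne\onetree$. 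Applying $\phi_{\frakV,\frakR}$ term by term to Eq.~(\mref{eq:mshpr2}), and using that it sends $\lm\ \rtm$ to $\lc\ \rc$, produces Eq.~(\mref{eq:wshpr2}); applying $\phi_{\frakV,\frakR}$ to Eq.~(\mref{eq:mshpr1}) and using that it is a monoid homomorphism for the concatenation products produces Eq.~(\mref{eq:wshpr1}). As the recursion runs on bracketing depth, equivalently on the height of the associated path, this is formally the same argument already given in the proof of Theorem~\mref{thm:anglepath}, so I would largely quote it. For the nonunitary half, Theorem~\mref{thm:diag} says $\phi_{\frakR,\frakV}$ restricts to the bijection $\phi_{\frakS\frakR,\frakL\frakV}\colon\frakS(X)\cap\frakR(X)\to\frakL(X)\cap\frakV(X)$, and Theorem~\mref{thm:anglepath} says $\bfk(\frakL(X)\cap\frakV(X))$ is a nonunitary Rota-Baxter subalgebra of $\bfk\frakV(X)$; together these give that $\bfk(\frakS(X)\cap\frakR(X))$ is $\shprw$-closed and that $\phi_{\frakS\frakR,\frakL\frakV}$ is a nonunitary Rota-Baxter algebra isomorphism onto it.

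For the second part I would observe that $\phi_{\frakM,\frakP}(x)=\xmb2$ (from Eq.~(\mref{eq:freeMP})), so $\phi_{\frakR,\frakV}\circ j_X=j_X$, where on the source side $j_X$ is the embedding of Eq.~(\mref{eq:jw}) into $\frakR(X)$ and on the target side it is the embedding of Eq.~(\mref{eq:jmm}) into $\frakP(X)$ (with image landing in $\frakV(X)$). By Corollary~\mref{co:motzfree}, $(\bfk\frakV(X),\shprp,\lm\ \rtm,j_X)$ is the free Rota-Baxter algebra on $X$; postcomposing its universal arrow with the isomorphism $\phi_{\frakV,\frakR}$ and invoking $\phi_{\frakR,\frakV}\circ j_X=j_X$ transfers the universal property verbatim to $(\bfk\frakR(X),\shprw,\lc\ \rc,j_X)$, so the latter is free. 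The nonunitary statement follows in the same way from the nonunitary half of Corollary~\mref{co:motzfree} together with the isomorphism $\phi_{\frakS\frakR,\frakL\frakV}$ of the first part.

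I do not expect a genuine obstacle here. The only place demanding care is the termwise matching in the second paragraph — confirming that indecomposability and the two-fold case split (letter versus bracket on the word side, single level step versus raised path on the path side) transport correctly through $\phi_{\frakR,\frakV}$ — and this is precisely the kind of argument already carried out for Theorem~\mref{thm:anglepath}, pushed one bijection further.
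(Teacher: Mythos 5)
Your proposal is correct and follows essentially the same route as the paper: the paper also obtains the corollary by transporting the Rota-Baxter structure of $(\bfk\frakV(X),\shprp,\lm\ \rtm)$ along the restriction $\phi_{\frakR,\frakV}$ of the \mapped monoid isomorphism $\phi_{\frakM,\frakP}$, matching indecomposable words with indecomposable paths to recover Eq.~(\mref{eq:wshpr2}) and Eq.~(\mref{eq:wshpr1}), and then invoking Theorem~\mref{thm:anglepath} and Corollary~\mref{co:motzfree} for freeness. Your explicit verification that $\phi_{\frakR,\frakV}\circ j_X=j_X$ and the transfer of the universal property is exactly the (largely implicit) argument in the paper.
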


\subsection{Rota-Baxter algebra structure on leaf decorated rooted forests}
We finish this paper by obtaining a free Rota-Baxter algebra structure on the free $\bfk$-module $\bfk \calf_\ell(X)$ of leaf decorated rooted forests.

Through the bijection $\phi_{\frakL\frakV,\calf_\ell}:\frakL(X)\cap\frakV(X) \to \calf_\ell(X)$ in Theorem~\mref{thm:diag}, the free nonunitary Rota-Baxter algebra on $\bfk (\frakL(X)\cap\frakV(X))$ in Corollary~\mref{co:motzfree}
gives us a free nonunitary Rota-Baxter algebra structure on $\bfk\, \calf_\ell(X)$.
Since $\phi_{\frakL\frakV,\calf_\ell}$ sends the link product of paths to the concatenation of forests and sends the raising operator to the grafting operator,
the two properties in Eq.~(\mref{eq:mshpr2}) and Eq.~(\mref{eq:mshpr1}) translate to the following properties characterizing the multiplication $\shprl$ on leaf-spaced leaf decorated forests.
\begin{enumerate}
\item If $F$ and $F'$ are leaf decorated trees, then
\begin{equation}
F \shprl F' = \left \{ \begin{array}{ll}
 F\,  F' \mbox{ (concatenation of trees)}, & F=\bullet_x \mbox{ or } F'=\bullet_x', \\
 \lc \oF \shprl F'\rc + \lc F \shprl \oF' \rc
 + \lambda \lc \oF \shprl \oF' \rc ,
 & F= \lc \oF \rc, F'=\lc \oF' \rc.
\end{array}
\right .
\mlabel{eq:lshpr2}
\end{equation}
Here the second line makes sense since a leaf decorated tree is either of the form $\onetree_x$ for some $x\in X$, or is of the form $\lc \oF\rc$ where $\oF$ is the leaf decorated forest obtained from $F$ by removing its root. In other words, $\oF$ is the forest of the branches of the root of $F$, with the same leaf decoration as for $F$.
\item
If $F=F_1\, \cdots \, F_b$ and $F'=F'_1\, \cdots \, F'_{b'}$ are in $\calf_\ell(X)$ with their corresponding decomposition into leaf decorated trees, then
\begin{equation}
F\shprl F' = F_1\, \cdots \, (F_b \shprl F'_1) \, \cdots \, F_{b'}.
\mlabel{eq:lshpr1}
\end{equation}
\end{enumerate}
The example in Eq.~(\mref{eq:pathprodex1}) translates to
\begin{eqnarray}
\xlb2 \shprl \ylb2 &=& \lc\, \bullet_x\, \rc \shprl \lc \, \bullet_y\,  \rc \notag \\
&=& \lc \, \bullet_x\,  \shprl \ylb2 \rc
    + \lc \xlb2 \shprl  \, \bullet_y\,  \rc
    + \lambda \lc \, \bullet_x\,  \shprl \, \bullet_y\,  \rc
    \notag \\
&=& \lc \, \bullet_x\,  \ylb2 \rc + \lc \xlb2\ \, \bullet_y\,  \rc
    + \lambda \lc \, \bullet_x\, \ \, \bullet_y\,  \rc
\mlabel{eq:ltreeprodex1}\\
&=& \xylg42 + \xylf41 + \lambda \xyld31
\notag
\bigskip
\end{eqnarray}

Further the map $j_X: X\to \frakL(X)\cap \frakV(X)$ in Eq.~(\mref{eq:jmm}) is transported to
\begin{equation}
j_X: X \to \calf_\ell(X), \quad j_X(x)=\onetree_x, x\in X.
\mlabel{eq:jlt}
\end{equation}
Then by Theorem~\mref{thm:anglepath} and Corollary~\mref{co:motzfree} we have
\begin{coro}
\begin{enumerate}
\item
The bijection $\phi_{\frakL\frakV,\calf_\ell}: \frakL(X)\cap \frakV(X) \to \calf_\ell(X)$ extends to an isomorphism
$\phi_{\frakL\frakV,\calf_\ell}: (\bfk (\frakL(X)\cap \frakV(X)), \shprp, \lm\ \rtm)\to (\bfk \calf_\ell(X), \shprl, \lc\ \rc)$ of nonunitary Rota-Baxter algebras. \item
The quadruple $(\bfk \calf_\ell(X), \shprl, \lc\ \rc, j_X)$ is the free nonunitary Rota-Baxter algebra on  $X$.
\end{enumerate}
\mlabel{co:ltreefree}
\end{coro}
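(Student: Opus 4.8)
The plan is to derive both parts purely formally, by transporting the nonunitary Rota--Baxter algebra structure already constructed on $\bfk(\frakL(X)\cap\frakV(X))$ in Theorem~\mref{thm:anglepath} along the bijection $\phi_{\frakL\frakV,\calf_\ell}\colon \frakL(X)\cap\frakV(X)\to\calf_\ell(X)$ of Theorem~\mref{thm:diag}. Recall from Theorem~\mref{thm:diag} that $\phi_{\frakL\frakV,\calf_\ell}$ is a bijection compatible with the (partially defined) products --- the link product of peak-free valley-free Motzkin paths and the concatenation of leaf-spaced leaf-decorated forests --- and with the distinguished operators, i.e.\ it carries the raising operator $\lm\ \rtm$ to the grafting operator $\lc\ \rc$.

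First I would extend $\phi_{\frakL\frakV,\calf_\ell}$ $\bfk$-linearly and regard the product $\shprl$ on $\bfk\,\calf_\ell(X)$ as the transported one,
\[
 F\shprl F' \;=\; \phi_{\frakL\frakV,\calf_\ell}\bigl(\phi_{\frakL\frakV,\calf_\ell}^{-1}(F)\,\shprp\,\phi_{\frakL\frakV,\calf_\ell}^{-1}(F')\bigr).
\]
With $\shprl$ so described, $\phi_{\frakL\frakV,\calf_\ell}$ tautologically intertwines $\shprp$ with $\shprl$, and by Theorem~\mref{thm:diag} it intertwines $\lm\ \rtm$ with $\lc\ \rc$; hence $(\bfk\,\calf_\ell(X),\shprl,\lc\ \rc)$ is a nonunitary Rota--Baxter algebra and $\phi_{\frakL\frakV,\calf_\ell}$ is an isomorphism of such algebras, which is part~(1). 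To match this $\shprl$ with the product characterized by Eq.~(\mref{eq:lshpr2}) and Eq.~(\mref{eq:lshpr1}), I would translate the recursive characterization of $\shprp$ from Eq.~(\mref{eq:mshpr2}) and Eq.~(\mref{eq:mshpr1}) through $\phi_{\frakL\frakV,\calf_\ell}$, using the one dictionary that is actually needed: a link-indecomposable element of $\frakL(X)\cap\frakV(X)$ is either $\xmb2$ for some $x\in X$, with $\phi_{\frakL\frakV,\calf_\ell}(\xmb2)=\onetree_x$, or of the form $\lm\ofrakm\rtm$ with $\ofrakm\in\frakL(X)\cap\frakV(X)$, in which case compatibility with the distinguished operators gives $\phi_{\frakL\frakV,\calf_\ell}(\lm\ofrakm\rtm)=\lc\,\phi_{\frakL\frakV,\calf_\ell}(\ofrakm)\,\rc$; dually a leaf-decorated tree inside a leaf-spaced forest is either $\onetree_x$ or $\lc\oF\rc$ with $\oF$ leaf-spaced. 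Substituting this dictionary into Eq.~(\mref{eq:mshpr2}) and Eq.~(\mref{eq:mshpr1}), and using that $\phi_{\frakL\frakV,\calf_\ell}$ sends the link product to concatenation of trees, turns those identities verbatim into Eq.~(\mref{eq:lshpr2}) and Eq.~(\mref{eq:lshpr1}); since both products are determined recursively on the depth of forests by these, they coincide.

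For part~(2) I would invoke Corollary~\mref{co:motzfree}, which identifies $(\bfk(\frakL(X)\cap\frakV(X)),\shprp,\lm\ \rtm,j_X)$, with $j_X(x)=\xmb2$, as the free nonunitary Rota--Baxter algebra on $X$, together with the elementary fact that freeness is preserved by an isomorphism of Rota--Baxter algebras: given any nonunitary Rota--Baxter algebra $(R,P)$ and a map $f\colon X\to R$, the unique Rota--Baxter morphism $\bfk(\frakL(X)\cap\frakV(X))\to R$ extending $f$, precomposed with $\phi_{\frakL\frakV,\calf_\ell}^{-1}$, is the unique Rota--Baxter morphism $\free{f}\colon\bfk\,\calf_\ell(X)\to R$; and $\free{f}\circ j_X=f$ holds because $\phi_{\frakL\frakV,\calf_\ell}(\xmb2)=\onetree_x$, so $\phi_{\frakL\frakV,\calf_\ell}$ carries the embedding $j_X$ of Eq.~(\mref{eq:jmm}) to the embedding $j_X$ of Eq.~(\mref{eq:jlt}).

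I do not anticipate a genuine obstacle here: once Theorems~\mref{thm:diag} and~\mref{thm:anglepath} and Corollary~\mref{co:motzfree} are in hand, the argument is a chain of formal reductions. The only point needing a little care is checking that $\phi_{\frakL\frakV,\calf_\ell}$ restricts the grafting/raising operators correctly on link-indecomposable pieces, so that the case split in Eq.~(\mref{eq:lshpr2}) matches the one in Eq.~(\mref{eq:mshpr2}); but this is already recorded in the explicit combinatorial description of $\phi_{\frakL,\calf}$ in the proof of Theorem~\mref{thm:treefree} and in Remark~\mref{rk:leafangle}, so in practice the proof is just the stated composition ``by Theorem~\mref{thm:anglepath} and Corollary~\mref{co:motzfree}''.
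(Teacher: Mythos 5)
Your proposal is correct and follows essentially the same route as the paper: transport the nonunitary Rota--Baxter structure of $\bfk(\frakL(X)\cap\frakV(X))$ along $\phi_{\frakL\frakV,\calf_\ell}$, check that the recursive characterizations in Eq.~(\mref{eq:mshpr2})--(\mref{eq:mshpr1}) translate into Eq.~(\mref{eq:lshpr2})--(\mref{eq:lshpr1}), and deduce freeness from Corollary~\mref{co:motzfree} together with the compatibility of the embeddings $j_X$. The paper's own argument is exactly this transport, stated more briefly.
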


%%%%%%%%%%%%%%%%%%%%%%%%%%%%%%%%%%%%%%%%%%%%%%%%%%%%%%%%%%%%%%%%%%%%%%%%%%%%%

%
%\addcontentsline{toc}{section}{\numberline {}References}
%


\begin{thebibliography}{abcdsfgh}


\mbibitem{Ag3} M. Aguiar, {On the associative analog of Lie bialgebras},
               {\em J. of Algebra} {\bf{244}} (2001), 492-532.
\mbibitem{A-M} M. Aguiar and W. Moreira, {Combinatorics of the free Baxter algebra,}
    {\em Electron. J. Combin.} {\bf 13} (2006) R17. arXiv:math.CO/0510169

\mbibitem{Al} L. Alonso, Uniform generation of a Motzkin word, {\em Theoretical Computer Science} {\bf 134} (1994), 529-536.

\mbibitem{Ba} G. Baxter, {An analytic problem whose solution follows from a simple algebraic identity,}
   {\em Pacific J. Math.}, {\bf 10} (1960), 731-742.

\mbibitem{BM} S. Benchekroun and P. Moszkowski,
A new bijection between ordered trees and legal bracketings
{\em Europ. J. Combinatorics} {\bf 17} (1996), 605 – 611.

\mbibitem{Be} T. Bertrand, Solution d'un proble\`eme,
    {\em C. R. Acad. Sci. Paris}, {\bf 105} (1887), 369.

\mbibitem{Ca} P. Cartier, {On the structure of free Baxter algebras}, {\em Adv. in Math.}, {\bf{9}} (1972), 253-265.

\mbibitem{CSY} W. Y. C. Chen, L. W. Shapiro, L. L. M. Yang, Parity reversing involutions on plane trees and 2-Motzkin paths, {\em Europ. J. Combinatorics} {\bf 27} (2006), 283-289.

\bibitem{RCo} R.M. Cohn, ``Difference Algebra",
    Interscience Publishers, 1965.

\mbibitem{C-K0} A. Connes, D. Kreimer,
{Hopf algebras, renormalization and noncommutative geometry},
 {\em Comm. Math. Phys.} {\bf 199} (1998), 203-242.

\mbibitem{C-K1} A. Connes and D. Kreimer, { Renormalization in quantum field theory and
             the Riemann-Hilbert problem. I. The Hopf algebra structure of graphs
             and the main theorem.},
             {\em Comm. Math. Phys.}, {\bf 210} (2000), no. 1, 249-273.

\mbibitem{Di} R. Diestel, "Graph Theory", Third edition, Springer-Verlag, 2005. Available on-line:
http://www.math.uni-hamburg.de/home/diestel/books/graph.theory/download.html

\mbibitem{D-S} E Deutsch and L. W. Shapiro, A bijection between ordered trees and 2-Motzkin paths adn its many consequences, {\em Discrete Math.} {\bf 256} (2002), 655-670.

\mbibitem{Do-S} R. Donaghey and L. W. Shapiro, Motzkin numbers, {\em J. Combin. Theory Ser. A} {\bf 23} (1977), 291-301.

\mbibitem{EGP} K. Bbrahimi-Fard, J. M. Gracia-Bondia and F. Patras, A Lie theoretic approach to renormalization,
arXiv:hep-th/0609035.

\mbibitem{E-G4} K. Ebrahimi-Fard and L. Guo, {Rota--Baxter
    algebras and dendriform dialgebras}, arXiv: math.RA/0503647.

\mbibitem{E-G0} K. Ebrahimi-Fard and L. Guo, {Free Rota--Baxter algebras and rooted trees}, arXiv:math.RA/0510266.

\mbibitem{E-G-K2} K. Ebrahimi-Fard, L. Guo and D. Kreimer, { Integrable Renormalization II:
           the General case}, {\em Annales Henri Poincare} {\bf 6} (2005), 369-395.

\mbibitem{E-G-K3} K. Ebrahimi-Fard, L. Guo and D. Kreimer, { Spitzer's Identity and the Algebraic
          Birkhoff Decomposition in pQFT},
         {\em J. Phys. A: Math. Gen.}, {\bf 37} (2004), 11037-11052.

\mbibitem{E-G-M} K. Ebrahimi-Fard, L. Guo and Dominique Manchon, Birkhoff type decompositions and the Baker-Campbell-Hausdorff
recursion, {\em Comm. in Math. Phys.} {\bf 267} (2006) 821-845, arXiv: math-ph/0602004.

\mbibitem{E-N} K.-J. Engel and R. Nagel,
One-parameter semigroups for linear evolution equations, Graduate Texts in Mathematics, {\bf 194}, Springer-Verlag, New York, 2000.

\mbibitem{Fl} P. Flajolet,
 Mathematical methods in the analysis of algorithms and data
 structures.
 Trends in theoretical computer science (Udine, 1984),
 Principles Comput. Sci. Ser., 12, Computer Sci. Press, Rockville, MD, 1988, 225--304.

\mbibitem{Gr} R. A. Grillet, Commutative Semigroups, Springer, 2006.

\mbibitem{G-L} R. Grossman and R. G. Larson,
    Hopf-algebraic structures of families of trees,
    {\em J. Alg.} {\bf 26} (1989), 184-210.

\mbibitem{Gu2} L. Guo,
    { Baxter algebras and the umbral calculus,}
    {\em Adv. in Appl. Math.,} {\bf 27} (2001), 405-426.

\mbibitem{Gu5} L. Guo, { Baxter algebras, Stirling numbers and partitions}, {\em J. Algebra Appl.}, {\bf 4} (2005), 153-164.

\mbibitem{G-K1} L. Guo and W. Keigher, {Baxter algebras and shuffle products}, {\em Adv. Math.}, {\bf 150} (2000), 117-149.

\mbibitem{G-K2} L. Guo and W. Keigher, { On free Baxter algebras:
    completions and the internal construction,}
    {\em Adv. Math.} {\bf 151} (2000), 101--127.

\mbibitem{G-K3} L. Guo and W. Keigher, {On differential Rota-Baxter algebras}, arXiv: math.RA/0703780.

\mbibitem{G-S} L. Guo and W. Yu Sit, {Enumenation of Rota-Baxter words}, to appear in Proceedings ISSAC 2006, Genoa, Italy, ACM Press, arXiv: math.RA/0602449.

\mbibitem{G-Z} L. Guo and B. Zhang, {Renormalization of multiple zeta values}, arXiv:math.NT/0606076.

\mbibitem{HLV} K. H. Hofmann, J. D. Lawson and E. B. Vinberg, Semigroups in Algebra, Geometry and Analysis,
Walter de Gruyter, 1995.

\bibitem{Kol} E. Kolchin, {``Differential Algebra and
    Algebraic Groups.''} Academic Press, New York, 1973.

\mbibitem{Kr1} D. Kreimer, {On the Hopf algebra structure of perturbative quantum field theories},
{\em Adv. Theor. Math. Phys.}, {\bf{2}} (1998), 303-334.

\mbibitem{Kr} G. Kreweras, Sur les eventails de segments, {\em Cahiers du B.U.R.O.}, {\bf 15} (1970).

\mbibitem{Lo1} J.-L. Loday, {Dialgebras},
in Dialgebras and related operads, {\em Lecture Notes in Math.},
{\bf{1763}}, (2001), 7-66, arXiv:math.QA/0102053.

\mbibitem{L-R1} J.-L. Loday and M. Ronco, {Trialgebras and families of polytopes,}
 in ``Homotopy Theory: Relations with Algebraic Geometry, Group
 Cohomology, and Algebraic K-theory" Contemporary Mathematics, 346, (2004), 369-398.

\bibitem{Ma} S. MacLane, { ``Categories for the Working
    Mathematician,''} Springer-Verlag, New York, 1971.

\mbibitem{Ro1} G. Rota, {Baxter algebras and combinatorial identities I,} {\em Bull. Amer. Math. Soc.,} {\bf 5}, 1969, 325-329.

\mbibitem{Ro2} G. Rota, {Baxter operators, an introduction,}
  In: ``Gian-Carlo Rota on Combinatorics, Introductory papers
  and commentaries", Joseph P.S. Kung, Editor,
  Birkh\"{a}user, Boston, 1995, 504-512.

\mbibitem{ST} A. Sapounakis and P. Tsikouras,
    On $k$-colored Motzkin words, {\em Jour. Integer Sequences}, {\bf 7} (2004), Article 04.2.5.

\mbibitem{SGIF} K. P. Shum, Y. Guo, M. Ito and Y. Fong (ed.), Semigroups, the International Conference on Semigroups and its Related Topics held at Yunnan University, Kunming, August 18--23, 1995.  Springer-Verlag, Singapore, 1998.

\mbibitem{Si} M. Singer, Talk at the Second International Workshop on Differential Algebra and Related Topics, April 12-13, 2007, Rutgers University - Newark, New Jersey.

\bibitem{S-P1} M. Singer and M. van der Put, ``Galois Theory of Difference Equations", Lecture Notes in Mathematics 1666, Springer, 1997.

\bibitem{S-P} M. Singer and M. van der Put, ``Galois Theory of Linear Differential Equations", Springer, 2003.

\mbibitem{St} R. R. Stanley, Enumerative Combinatorics, Vol. 2, Cambridge University Press, 1999.

\mbibitem{We} E.~W. Weisstein, "Tree." From MathWorld, http://mathworld.wolfram.com/Tree.html



\end{thebibliography}
\end{document}